\theoremstyle{plain}
\newtheorem{theorem}{Theorem}[section]
\crefname{thm}{theorem}{theorems}
\newtheorem{thmintro}{Theorem}
\crefname{thmintro}{theorem}{theorems}
\newtheorem{definition}[theorem]{Definition}
\crefname{corintro}{corollary}{corollaries}
\newtheorem{lemma}[theorem]{Lemma}
\newtheorem{corollary}[theorem]{Corollary}
\newtheorem{proposition}[theorem]{Proposition}
\newtheorem{example}[theorem]{Example}
\theoremstyle{definition}
\newtheorem{remark}[theorem]{Remark}
\newtheorem*{remark*}{Remark}
\newtheorem*{question*}{Question}
\newcommand{\D}{\mathbb{D}}
\newcommand{\Q}{\mathbb{Q}}
\newcommand{\Z}{\mathbb{Z}}
\newcommand{\ind}{\mathrm{ind}}
\def\la{\langle\,}
\def\ra{\,\rangle}
\newcommand{\bigslant}[2]{{\raisebox{.5em}{$#1$}\left/\raisebox{-.5em}{$#2$}\right.}}
\def\co{\colon\thinspace}
\def\coeq{\coloneqq\thinspace}
\newcommand{\RR}{\mathbb{R}}
\newcommand{\CC}{\mathbb{C}}
\newcommand{\TT}{\mathbb{T}}
\newcommand{\NN}{\mathbb{N}}
\newcommand{\DD}{\mathbb{D}}
\newcommand{\QQ}{\mathbb{Q}}
\newcommand{\SSS}{\mathbb{S}}
\DeclareMathOperator{\image}{image}
\DeclareMathOperator{\Int}{int}
\DeclareMathOperator{\Id}{Id}
\DeclareMathOperator{\CHA}{CH}
\def\rd{{\mathrm d}}
\newcommand{\neigh}{\mathcal{N}}
\newcommand{\norm}[1]{\left\vert#1\right\vert}
\newcommand{\calN}{\mathcal{N}}
\def\cM{{\mathcal M}}
\DeclareMathOperator{\lcm}{lcm}
\begin{document}

\title{
Tight contact structures without symplectic fillings are everywhere}


\author[J.~Bowden]{Jonathan Bowden}
\address{Leibniz Universit\"at Hannover, Germany}
\email{jonathan.bowden@math.uni-hannover.de}
\author[F.~Gironella]{Fabio Gironella}
\address{CNRS - Laboratoire de Mathématiques Jean Leray, Nantes Université, France}
\email{fabio.gironella@cnrs.fr}
\author[A.~Moreno]{Agustin Moreno}
\address{Heidelberg Universit\"at, Germany}
\email{agustin.moreno2191@gmail.com}
\author[Z.~Zhou]{Zhengyi Zhou}
\address{State Key Laboratory of Mathematical Sciences, AMSS, CAS, China \newline \indent
Morningside Center of Mathematics \& Institute of Mathematics, AMSS, CAS, China}
\email{zhyzhou@amss.ac.cn}

\date{}

\begin{abstract}
\noindent
 We show that for all $n \ge 3$, any $(2n+1)$-dimensional manifold that admits a tight contact structure, also admits a tight but non-fillable contact structure, in the same almost contact class. For $n=2$, we obtain the same result, provided that the first Chern class vanishes.
We further construct Liouville but not Weinstein fillable contact structures on any Weinstein fillable contact manifold of dimension at least $7$ with torsion first Chern class.
\end{abstract}

\maketitle

\section{Introduction}

In contact topology, there is an important dichotomy dividing contact structures into those that are \emph{overtwisted} and those that are \emph{tight}. 
 According to \cite{Eli89,BEM}, the former are in fact topological and not geometric objects, in the sense that they satisfy $h$-principles so that their existence and classification can be reduced to classical obstruction theory and algebraic topology. 
 On the other hand, tight contact structures are inherently geometric, hence in general much harder to construct, and let alone classify. 
There is, however, an important sufficient condition for tightness: every contact manifold which is \emph{symplectically fillable} is necessarily tight \cite{Gro85,Eli90,Nie06,BEM}. Here we consider \emph{strong} fillings, i.e.\ symplectic manifolds such that the contact structure at the boundary is defined by the contraction of the symplectic form with a Liouville vector field, locally defined near the boundary.
A natural question is then to understand the difference between the classes of tight and strongly fillable contact structures. Our first result addresses this question in broad generality, on manifolds of dimension at least five.

\begin{thmintro}\label{thm:tightnonfill_general}
If $(M^{2n+1},\xi)$ is tight with $n \ge 3$, then $M$ admits a tight, non-strongly-fillable contact structure in the same almost contact class. 
If $n =2$, the same holds, under the additional assumption that the first Chern class of $\xi$ is zero. 
\end{thmintro}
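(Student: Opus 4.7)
The plan is to build the desired tight, non-fillable contact structure $\xi'$ on $M$ by a local surgical modification of $\xi$, supported inside a Darboux ball. There are three things to arrange: a suitable local model, a gluing, and a verification of the three properties (almost contact class, tightness, non-fillability).

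For the \emph{local model}, I would produce a compact contact manifold $(B^{2n+1},\zeta)$ diffeomorphic to a ball whose contact structure agrees with the standard one on a collar of $\partial B$, is tight on all of $B$, and carries a robust filling obstruction, for instance a Plastikstufe $\PS$ with trivial rotation class, a bLob, or a Giroux-torsion-type domain detectable by cylindrical contact homology $\CA$ (or a related SFT invariant). It is important that $\zeta$ be almost contact trivial on $B$ relative to the boundary, so that the subsequent insertion does not disturb the almost contact class of $\xi$. The natural way I would build such a model is through a contact open book on $B$ with carefully chosen Weinstein page and monodromy arranged so that the obstruction sits inside a single page while the ambient contact structure remains tight.

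For the \emph{assembly}, fix a Darboux embedding $(B_0,\xistd)\hookrightarrow (M,\xi)$, excise the interior of $B_0$, and glue in $(B,\zeta)$ along the standard contact boundary to produce $(M,\xi')$. Since the modification happens inside a ball and $\zeta$ is almost contact trivial relative to the boundary, the underlying almost contact structure of $\xi'$ agrees with that of $\xi$ up to homotopy, by a routine relative obstruction argument on the contractible ball.

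For \emph{non-fillability}, any strong filling $W$ of $(M,\xi')$ would, near the inserted region, accommodate the moduli space of holomorphic curves associated with the obstruction in $\zeta$; these curves then escape the inserted region and violate the standard dimension/compactness dichotomy, giving a contradiction. For \emph{tightness}, I would argue that any putative overtwisted or plastikstufe disc in $\xi'$ can be displaced — via Gray stability and general position — out of the inserted piece and into its complement, which is contactomorphic to an open subset of the tight $(M,\xi)$, yielding a contradiction. The main obstacle, and the real content of the construction, is the first step: producing a tight ball with trivial relative almost contact structure that also carries a robust filling obstruction. This is also where the hypothesis $c_1(\xi)=0$ enters in dimension $n=2$, because the $5$-dimensional models one can produce along these lines carry a first Chern class contribution that cannot be absorbed by the surrounding contact structure unless $c_1(\xi)$ already vanishes.
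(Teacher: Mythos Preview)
Your high-level architecture --- replace a Darboux ball by an exotic tight ball standard near the boundary, equivalently take the contact connected sum with a homotopically standard tight non-fillable sphere --- is exactly what the paper does. But the two places you label as routine are in fact the entire content of the theorem, and your proposed arguments for them do not work.

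First, the tightness step. You cannot ``displace a putative overtwisted disc out of the inserted piece by Gray stability and general position'': overtwistedness is a germ condition detected by an open neighbourhood, not a submanifold you can push around. Worse, the obstruction carriers you propose for the ball (a Plastikstufe with trivial rotation, a bLob) are known by \cite{CMP} and related work to force overtwistedness, so a \emph{tight} ball cannot contain one. The paper avoids this entirely: it builds the sphere model by flexible/subcritical surgery on a Bourgeois contact manifold and certifies tightness \emph{algebraically}, via non-vanishing of contact homology (the $1$-ADC condition for $n\ge 3$, and a cobordism-to-hypertight argument for $n=2$). This algebraic tightness is then automatically inherited by the connected sum with any strongly fillable $(M,\xi)$, since fillable manifolds are algebraically tight and $\CHA$ is multiplicative under $\#$.

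Second, the non-fillability step. Since the ball cannot carry a Plastikstufe/bLob, there is no local holomorphic-disc obstruction to simply ``escape'' into a putative filling of $M\#\SSS^{2n+1}$. The paper's obstruction is global and cohomological: one stacks the hypothetical strong filling with an explicit capping cobordism (blow-down of a Giroux domain coming from the $\SSS^1$-invariant structure of the Bourgeois piece) to reach a flexibly/subcritically fillable boundary, and then a symplectic-cohomology argument (\Cref{thm:unique}) forces a homology class that was killed in the cobordism to nonetheless survive in the filling --- a contradiction. This is also where the hypothesis $c_1(\xi)=0$ for $n=2$ actually enters: it is needed so that Conley-Zehnder indices are well defined in the neck-stretching argument for the $6$-dimensional flexible case of \Cref{thm:unique}, not because of any almost-contact bookkeeping in the ball.
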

\noindent 

The assumption on the dimension in \Cref{thm:tightnonfill_general} is necessary, since in dimension three the statement is false. For example, the sphere $\mathbb S^3$ admits a unique tight contact structure \cite{Eli92}, which is also symplectically fillable by the standard Darboux ball. Historically the first examples of tight contact structures that are not symplectically fillable were found in dimension $3$ by Entyre-Honda \cite{EtnHon} and on a special class of product manifolds in dimensions 5 and greater by Massot-Niederkr\"uger-Wendl \cite{MNW}.

One can view \Cref{thm:tightnonfill_general} as suggesting that contact topology in higher dimensions exhibits significantly more flexibility than in dimension three and all contact phenomena ought to occur independent of the underlying smooth topology. In particular, the proof of \Cref{thm:tightnonfill_general} involves a reduction to the case of a standard smooth sphere, i.e.\ we first prove the following special case.
\begin{thmintro}
\label{thm:tightnonfill}
For every $n \ge 2$, the sphere $\mathbb S^{2n+1}$ admits a tight, non-strongly-fillable contact structure that is homotopically standard.
\end{thmintro}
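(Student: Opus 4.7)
The plan is to produce the tight non-fillable contact structure $\xi$ on $\SSS^{2n+1}$ by constructing a supporting open book whose monodromy is a controlled perturbation of the standard one. Fix a supporting open book $(W, \phi_0)$ of $(\SSS^{2n+1}, \xistd)$: for instance, take $W$ to be the standard Weinstein ball with the trivial monodromy. The idea is to realize $\xi$ as the Giroux-supported contact structure of a modified open book $(W, \phi_0 \circ \psi)$, where $\psi$ is a symplectomorphism of $W$, compactly supported in a proper Weinstein subdomain $W_0 \subset W$, and encoding a localized \emph{non-fillable but tight} feature.

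The heart of the construction is the choice of $\psi$: I would arrange that the resulting contact structure $\xi$ contains an embedded codimension-zero contact sub-domain $(D, \zeta) \hookrightarrow (\SSS^{2n+1}, \xi)$ whose local model is known to obstruct symplectic fillability while remaining tight. Natural candidates are a higher-dimensional Giroux torsion region, or a tight sub-domain imported from a Bourgeois-type construction on a product $Y \times T^{2n-2}$ (as in Massot--Niederkr\"uger--Wendl). Since $\psi$ is compactly supported in a proper sub-domain and $\SSS^{2n+1}$ is simply connected, the almost contact class of $\xi$ can be shown to agree with that of $\xistd$ by an explicit homotopy of almost contact structures on the complement of $W_0$.

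To establish tightness I would show either that $(\SSS^{2n+1}, \xi)$ embeds into a known tight ambient contact manifold as a separating hypersurface, or argue directly: an overtwisted disk in $\xi$ cannot lie entirely in the plug $D$ (tight by assumption), so it must extend into the "unmodified" region, where $\xi$ agrees with $\xistd$, contradicting the tightness of the standard sphere. Non-fillability is proved by contradiction: given a putative strong filling $(X, \omega)$, a neck-stretching degeneration along $\partial D$ produces an SFT-type holomorphic building whose combinatorial and dimensional properties can be forced to contradict the geometry of the plug, in the spirit of the obstruction mechanism used by Massot--Niederkr\"uger--Wendl.

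The main obstacle is the delicate balance between the two constraints: the perturbation $\psi$ must be strong enough to obstruct every symplectic filling via the holomorphic-curve analysis, yet mild enough not to introduce an overtwisted disk anywhere on $\SSS^{2n+1}$. Carrying out the requisite compactness, transversality, and asymptotic analysis of the relevant moduli spaces in the higher-dimensional SFT framework, and producing an explicit local model $(D, \zeta)$ that simultaneously satisfies both constraints, is expected to be the technical core of the proof.
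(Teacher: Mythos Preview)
Your proposal has a genuine gap in the tightness argument, and the overall strategy is too underspecified to constitute a proof.

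\medskip

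\textbf{The tightness argument fails.} You write that an overtwisted disk ``cannot lie entirely in the plug $D$ (tight by assumption), so it must extend into the `unmodified' region, where $\xi$ agrees with $\xistd$, contradicting the tightness of the standard sphere.'' This reasoning is invalid: tightness is not a local property, and an overtwisted disk straddling the interface between $D$ and its complement is in no way excluded by tightness of the two pieces separately. Gluing tight pieces along convex boundaries routinely produces overtwisted results. This is precisely why the paper does \emph{not} argue tightness geometrically but instead proves \emph{algebraic tightness}: the Bourgeois manifold $BO(\Sigma,\psi)$ is shown to have nonvanishing contact homology (either via the $1$-ADC property coming from index estimates on the binding, or in dimension $5$ via a pseudo-Liouville cobordism to a hypertight manifold), and algebraic tightness is inherited by the positive end of any Liouville cobordism, hence survives contact surgery.

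\medskip

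\textbf{The construction and the non-fillability obstruction are not pinned down.} Starting from the trivial open book $(\DD^{2n},\Id)$ and composing with a compactly supported symplectomorphism $\psi$ gives you no concrete local model $(D,\zeta)$ with the required properties; ``higher-dimensional Giroux torsion'' or ``a Bourgeois-type sub-domain'' are not objects you can simply embed in a sphere while retaining control. The paper's route is quite different and concrete: it starts from the Bourgeois contact manifold $\SSS^{2n-1}\times\TT^2$ (for a specific Milnor open book on $\SSS^{2n-1}$), then performs explicit index-$1$ and index-$2$ flexible contact surgeries to kill the $\TT^2$ topology and reach a standard smooth sphere. Non-fillability is then obstructed not by a plug-and-neck-stretch argument on the sphere itself, but by stacking the putative filling with a strong (capping) cobordism to a flexibly fillable manifold $M'_+$ and invoking a homological restriction on fillings of $M'_+$ coming from symplectic cohomology (Theorem~\ref{thm:unique}): a class in $H_n(M'_+;\QQ)$ that survives in the flexible filling must die in the glued filling because it can be pushed down into the sphere, and this yields the contradiction.

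\medskip

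In short, the paper replaces both of your heuristic steps---local tightness and a vague SFT contradiction---with global algebraic invariants (contact homology for tightness, symplectic cohomology of an auxiliary capped filling for non-fillability), and the contact structure is produced by surgery on a Bourgeois manifold rather than by modifying a monodromy on a ball.
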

\noindent Here, \emph{homotopically standard} means homotopic to the standard contact structure among \emph{almost contact structures} (the algebraic topological object underlying contact structures, in terms of which the $h$-principles in \cite{Eli89,BEM} are formulated). We then deduce \Cref{thm:tightnonfill_general} by considering connected sums with these non-fillable examples on spheres to preserve the smooth topology but alter the properties of the contact structure. Of course, this requires some significant effort since both the obstruction to fillability as well as tightness do not behave well under connected sums {\em a priori}. 

\subsection*{Hierarchy of fillings} The notion of fillability comes with several different nuances, namely, one has notions of \emph{Weinstein fillability}, \emph{Liouville fillability} and \emph{strong fillability}. 
Thus ordered, each notion is stronger than the following one, and all of them in fact imply tightness. One thus has the following natural \emph{hierarchy} of contact manifolds:
$$
\{\mbox{Weinstein fillable}\}\subseteq \{\mbox{Liouville fillable}\}
\subseteq \{\mbox{strongly fillable}\}\subseteq  \{\mbox{tight}\}.
$$
There has been a considerable amount of work over almost 30 years in studying these inclusions, and they are known to be strict 
in any odd dimension $\ge 3$ \cite{Eli96,EtnHon,Ghi05,Gay,NieWen11, Bow_exact, MNW, BCS1, Zhou_2021, BGM, GhiNie22}. On the other hand, all previously known examples of contact structures with exotic fillability properties come from special geometric constructions, all of which again require the presence of non-trivial topology in the underlying smooth manifold. Now, as a consequence of \Cref{thm:tightnonfill_general} and \Cref{thm:tightnonfill}, the final inclusion is strict for all manifold of dimension $\ge 7$, as soon as this admits at least one tight contact structure.


\begin{remark*}
In fact, there is also the further notion of {\em weak fillability}, which, as the name suggests, is the weakest possible notion of fillability. 
Since this in fact agrees with strong fillability on spheres due to \cite[Proposition 6]{MNW}, we do not consider weak fillability here. 
In particular, even though strictly speaking in (the proof of) \Cref{thm:tightnonfill} we obstruct strong fillability, this equivalence of strong and weak fillability for spheres implies that those contact spheres in fact are also not weakly fillable.
\end{remark*}

Our next result addresses the strictness of the first inclusion in the hierarchy above by constructing examples that are Liouville fillable but admit no Weinstein filling on spheres of sufficiently large dimension.
\begin{thmintro}
    \label{thm:exact_non_Stein_sphere}
For any $n \ge 3$ there exist homotopically standard Liouville fillable contact structures on $\mathbb S^{2n+1}$ that are not Weinstein fillable.
\end{thmintro}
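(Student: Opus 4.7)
The plan is to construct a Liouville domain $V$ with boundary diffeomorphic to $\mathbb{S}^{2n+1}$ whose induced contact structure is homotopically standard, and then to use a Floer-theoretic invariant of the contact boundary to obstruct the existence of any Weinstein filling. The overall strategy exploits the tension between the flexibility of Liouville constructions, which can introduce symplectic or dynamical features unavailable to standard handlebody fillings, and the rigidity of Weinstein handle decompositions on a ball, as leveraged in \cite{MNW, BCS1, Zhou_2021, GhiNie22}.

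The first step is to construct the Liouville filling. Starting from the standard Weinstein ball $D^{2n+2}$, I would attach a Liouville (but not Weinstein) cobordism along its boundary, for example by grafting on a piece in the spirit of Massot--Niederkr\"uger--Wendl or Ghiggini--Niederkr\"uger. The attached cobordism must be chosen so that the resulting boundary is the standard smooth $\mathbb{S}^{2n+1}$ and so that the completed Liouville manifold carries a closed Reeb orbit of controlled low Conley--Zehnder index -- equivalently, a non-vanishing class in positive symplectic cohomology in a specific degree -- that is incompatible with any Weinstein filling of a homotopy ball. Verifying that the induced contact structure $\xi_V$ sits in the standard almost contact class is then a characteristic class computation: for $n \ge 3$ the relevant homotopy classes on $\mathbb{S}^{2n+1}$ form a discrete set detected by classical invariants that can be tracked through the attachment, and if the attached cobordism is chosen topologically trivially this reduces to an essentially book-keeping exercise with the $U(n)$-reduction of the tangent bundle.

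The central difficulty, and where I expect the main obstacle, is the obstruction to Weinstein fillability. The plan is to show that $(\mathbb{S}^{2n+1}, \xi_V)$ is asymptotically dynamically convex in the sense of Lazarev, so that positive (or $S^1$-equivariant) symplectic cohomology becomes an invariant of the contact boundary, independent of the chosen exact filling. One then computes this invariant from $V$ and compares it to the structural constraints on any putative Weinstein filling: a Weinstein filling of a homotopy sphere admits a Morse-type handle decomposition with handles of index at most $n+1$, which forces a vanishing or positivity pattern on $SH^{+}$ that is violated by the value read off from $V$. The delicate points are (i) establishing the ADC condition for $\xi_V$, which requires fine control over the Reeb dynamics introduced by the Liouville surgery, and (ii) producing a quantitative discrepancy between the Floer invariant computed from $V$ and the invariants that can be realized by any Weinstein filling of a homotopy ball. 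Once both are in hand, $V$ provides the desired Liouville but not Weinstein fillable contact structure on $\mathbb{S}^{2n+1}$.
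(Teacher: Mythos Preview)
Your overall strategy---use the ADC condition to make positive symplectic cohomology a contact invariant, then exploit the handle-index bound for Weinstein fillings---is the same as the paper's. But the construction you outline has a genuine gap, and the paper's route is materially different from what you describe.

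You propose to start from the ball $D^{2n+2}$ and ``graft on'' a Liouville cobordism so that the boundary remains a sphere, while simultaneously producing a non-trivial $SH^+$ class in a forbidden degree. These two requirements are in tension: if the cobordism is ``chosen topologically trivially,'' as you suggest for the almost-contact check, the resulting filling will still be a homotopy ball and its cohomology cannot carry the high-degree class you need; if it is not, you have no mechanism for controlling the resulting smooth boundary. There is no known way to ``locally'' insert a Liouville-but-not-Weinstein piece into a ball while keeping both the sphere boundary and the ADC property. This is the missing idea.

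The paper proceeds in the opposite direction. It starts from a Liouville domain that is \emph{manifestly} not Weinstein---namely $V\times\DD^2$, where $V\cong N\times I$ is one of the Massot--Niederkr\"uger--Wendl domains with disconnected contact boundary---and then \emph{kills the topology} by flexible surgery to reach a homotopy sphere. The point of using $V\times\DD^2$ is threefold: its symplectic cohomology vanishes (it is a product with a disc), its ordinary cohomology is non-trivial in degree $2n-1$ (from the fundamental class of $N^{2n-1}$), and its contact boundary is generalized ADC (Proposition~\ref{prop:product_ADC}). Flexible surgery then preserves ADC by Lazarev's theorem, and reaching a homotopy sphere is handled by a bordism argument (Lemma~\ref{lem:surg_parallel}) using stable triviality of the tangent bundle of $G/\Gamma$. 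The obstruction is then phrased via \cite[Corollary~B]{Zho21}: ADC plus $SH^*=0$ forces \emph{every} exact filling with $c_1=0$ to have the same cohomology as the natural one, hence non-trivial $H^{2n-1}$, which is impossible for a Weinstein domain when $2n-1>n+1$, i.e.\ $n\ge 3$. Finally, connected sums bring one to the standard smooth sphere and the standard almost contact class.

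So your obstruction mechanism is right, but the construction must run backwards from how you describe it: build a Liouville domain with the desired cohomology first, then surger down to a sphere, rather than trying to decorate a ball.
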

By taking connected sum with examples on spheres, we can further deduce that there are abundant Liouville fillable contact manifolds without Weinstein fillings in the case that the first Chern class is torsion (cf.\ \Cref{thm:exact_non_Stein_general} below).
Thus, other than addressing the $5$-dimensional case, it remains to determine the existence of strongly but not Liouville fillable contact structures on $\mathbb S^{2n+1}$, for all $n\geq 2$.

\subsection*{Infinite classes of examples}
After one constructs examples of contact structures with exotic fillability properties, it is natural to try to produce \emph{infinitely many} such examples as well. 
In this direction, by pushing the techniques used for the proofs of the previous statements further, namely by taking contact connected sums with some other judiciously chosen Weinstein fillable contact structures on the sphere, we obtain the following.

\begin{thmintro}\label{thm:tightnonfill_general_infinite}
Let $n\geq 5$, and suppose that $M^{2n+1}$ admits 
a Weinstein fillable contact structure with torsion first Chern class. 
Then, $M$ admits infinitely many non-isomorphic, tight, and not strongly fillable contact structures in the same almost contact class.
\end{thmintro}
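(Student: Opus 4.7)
The plan is to iterate the connected-sum strategy of \Cref{thm:tightnonfill_general}, combining the tight non-fillable contact sphere from \Cref{thm:tightnonfill} with an infinite family of auxiliary contact spheres distinguished by a symplectic invariant. Concretely, one first fixes a tight, non-strongly-fillable, homotopically standard contact structure $\xi$ on $\SSS^{2n+1}$ as provided by \Cref{thm:tightnonfill}, and constructs an infinite family $\{\eta_k\}_{k\in\NN}$ of pairwise non-isomorphic, homotopically standard contact structures on $\SSS^{2n+1}$ --- for example Weinstein fillable contact structures coming from Brieskorn manifolds with varying exponents, or from iterated subcritical handle attachments --- whose isomorphism type is detected by some SFT-type invariant $I$ defined whenever $c_1$ is torsion (e.g.\ the growth rate of symplectic cohomology of a filling, or an algebraic-torsion order). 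The dimension hypothesis $n\geq 5$ is what ensures both that such a family can be realized inside the \emph{standard} almost contact class and that $I$ is defined and non-trivial to compute in this range.

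Given such data, set
$$
\xi_W^{(k)} \coloneqq \xi_W \, \# \, \xi \, \# \, \eta_k, \quad k\in\NN.
$$
Since each $\xi$ and $\eta_k$ is homotopically standard, every $\xi_W^{(k)}$ lies in the almost contact class of $\xi_W$; torsion of $c_1$ is preserved by connected sum, so $I(\xi_W^{(k)})$ is well-defined. Tightness and non-strong-fillability of each $\xi_W^{(k)}$ then follow from the same arguments used in the proof of \Cref{thm:tightnonfill_general}: tightness from the connected-sum preservation statement established there, applied twice; non-fillability by splitting a hypothetical strong filling along a contact sphere inside the connect-sum region, capping off the $\eta_k$ summand by its Weinstein filling and the $\xi_W$ summand by its given filling, to produce a strong filling of $\xi$ --- contradicting \Cref{thm:tightnonfill}.

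To upgrade this to pairwise non-isomorphism of the $\xi_W^{(k)}$, the plan is to establish a connect-sum formula for $I$ via a neck-stretching argument through the sphere separating the $\eta_k$ factor from the rest, of the form
$$
I(\xi_W^{(k)}) \;=\; F\bigl(I(\xi_W \# \xi),\, I(\eta_k)\bigr)
$$
for some function $F$ injective in the second variable. Since by construction the values $I(\eta_k)$ are pairwise distinct, this will propagate to pairwise distinctness of $I(\xi_W^{(k)})$, so that no two $\xi_W^{(k)}$ can be contactomorphic.

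The main obstacle is precisely this last step: proving a suitable connect-sum formula for $I$ with enough control to separate the $\eta_k$-inputs. This requires delicate transversality and SFT compactness arguments for the moduli spaces counted by $I$ under splitting along the separating contact sphere, and this is very likely the source of the relatively strong dimension hypothesis $n\geq 5$: it both provides the geometric room needed to realize the $\eta_k$ within the standard almost contact class of $\SSS^{2n+1}$ and places us well inside the dimensional range in which the relevant moduli spaces of punctured holomorphic curves can be cut out transversely and compactified without exotic pathologies.
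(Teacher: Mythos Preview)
Your overall architecture---connect sum the non-fillable sphere with a varying family of auxiliary spheres and distinguish the results by an SFT-type invariant---matches the paper's strategy. But two of your three key steps contain genuine gaps.

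\textbf{Non-fillability.} Your argument here is invalid. You propose to ``split a hypothetical strong filling along a contact sphere'' and cap off the $\xi_W$ and $\eta_k$ summands by their own fillings to manufacture a filling of $\xi$. There is no such splitting theorem for strong fillings of connected sums in dimension $\geq 5$; the paper explicitly notes that Eliashberg's $3$-dimensional result has no known higher-dimensional analogue. Moreover, fillings are convex, so you cannot use them as caps. The paper's mechanism is entirely different: it builds a strong (non-exact) cobordism from $(\SSS^{2n+1},\xi_{ex})$ to a flexibly fillable manifold $M'_+$ carrying a homology class that dies in the cobordism but survives in the flexible filling, and this obstruction is stable under connected sum with any $M$ (\Cref{thm:kill_strong}, via \Cref{thm:unique}). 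That is how non-fillability of each $Y_i$ is obtained.

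\textbf{The distinguishing invariant.} You leave $I$ unspecified and flag the connected-sum formula as ``the main obstacle''. In fact the connected-sum formula is not the obstacle: the paper takes $I = \dim SH^{2n-3}_+$, and the additivity under connected sum is already known from Cieliebak--Oancea. The real issue is that your $\xi_W^{(k)}$ are \emph{not fillable}, so $SH^*_+$ of a filling makes no sense; one needs the intrinsic $SH^*_+$ of a $1$-ADC contact manifold. This forces two nontrivial inputs you do not mention: (i) proving that $(\SSS^{2n+1},\xi_{ex})$ itself is $(n-3)$-ADC (\Cref{lemma:sphere_ADC}), which requires computing Conley--Zehnder indices of \emph{all} orbits, including those that become contractible only after surgery; and (ii) replacing the given $\xi_W$ by a \emph{flexibly} fillable structure $\xi_{flex}$ in the same almost contact class, so that the $M$-summand is ADC with $SH^*_+$ supported in degrees $\le n+1$. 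The auxiliary spheres $(\SSS^{2n+1},\xi')$ are built from carefully chosen Brieskorn manifolds (\Cref{prop:brieskorn}) so that $\dim SH^{2n-3}_+$ is nonzero and finite. The hypothesis $n\ge 5$ then enters precisely as the numerical condition $2n-3 > n+1$ ensuring the degree-$(2n-3)$ contributions of $\xi_{ex}$ and $\xi_{flex}$ vanish, so that $\dim SH^{2n-3}_+(Y_i)=i\cdot\dim SH^{2n-3}_+(\SSS^{2n+1},\xi')$ distinguishes the $Y_i$.
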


In particular, spheres of dimension at least $11$ admit \emph{infinitely} many tight non-fillable contact structures which are homotopically standard. One also has a similar result for Liouville, but not Weinstein fillable contact structures.

\begin{thmintro}\label{thm:exact_non_Stein_general}
Suppose that $M^{2n+1}$ is of dimension $2n+1 \ge 7$ and admits 
a Weinstein fillable contact structure with torsion first Chern class. 
Then $M$ also admits infinitely many non-isomorphic Liouville fillable contact structures that are not Weinstein fillable in the same formal class.
\end{thmintro}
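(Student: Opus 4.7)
The proof will follow the same connected-sum strategy used for \Cref{thm:tightnonfill_general_infinite}, with the homotopically standard tight non-fillable contact structures on $\mathbb{S}^{2n+1}$ employed there replaced by the homotopically standard Liouville but not Weinstein fillable contact structures on $\mathbb{S}^{2n+1}$ provided by \Cref{thm:exact_non_Stein_sphere}. The hypothesis $2n+1 \geq 7$ is exactly what makes those sphere examples available.

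Concretely, starting from a Weinstein fillable $(M^{2n+1},\xi_W)$ with torsion $c_1(\xi_W)$ and denoting by $\xi_L$ a homotopically standard Liouville but not Weinstein fillable contact structure on $\mathbb{S}^{2n+1}$ coming from \Cref{thm:exact_non_Stein_sphere}, I would form
\[
\eta_k \coloneqq \xi_W \mathbin{\#} \xi_L \mathbin{\#} \xi_k,
\]
where $\{\xi_k\}_{k\in\mathbb{N}}$ is an infinite family of pairwise non-isomorphic Weinstein fillable contact structures on $\mathbb{S}^{2n+1}$ in the standard almost contact class---for instance coming from Brieskorn spheres or analogous Milnor-fiber-type constructions distinguished by flavors of contact homology. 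Since $\xi_L$ and each $\xi_k$ are homotopically standard, $\eta_k$ lies in the same formal class as $\xi_W$; the first Chern class remains torsion because the sphere summands satisfy $H^2(\mathbb{S}^{2n+1})=0$. Liouville fillability of $\eta_k$ is immediate from the fact that the boundary connected sum of Liouville cobordisms is again a Liouville cobordism.

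The core obstacle is the dual task of (a) showing that none of the $\eta_k$ is Weinstein fillable, and (b) distinguishing the $\eta_k$ pairwise up to contactomorphism. For both one needs an algebraic invariant of the contact manifold (or of any of its Liouville fillings) that (i) captures the Liouville-versus-Weinstein obstruction already used for $\xi_L$ in \Cref{thm:exact_non_Stein_sphere}, (ii) is well-defined and integer graded in the present setting, which is where the torsion $c_1$ hypothesis is essential, and (iii) satisfies a connected-sum formula of Mayer--Vietoris or Künneth type so that the pathology on the $\xi_L$-summand survives on the full $\eta_k$, and analogously so that the distinguishing data carried by the $\xi_k$ is not washed out in the sum. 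Natural candidates are variants of equivariant symplectic cohomology or linearized contact homology with suitably restricted action or grading windows.

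The hard part is therefore not the topological setup---which mirrors \Cref{thm:tightnonfill_general_infinite} almost verbatim---but establishing the required connected-sum behaviour of the chosen invariant with enough precision to simultaneously obstruct Weinstein fillings on every $\eta_k$ and separate infinitely many of them. I would expect to import the obstruction from \Cref{thm:exact_non_Stein_sphere} via a one-summand-suffices argument on the invariant, and to feed in the $\xi_k$-family so that their invariants realize infinitely many values in the relevant grading, translating into pairwise non-isomorphism of the $\eta_k$.
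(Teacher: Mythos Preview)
Your connected-sum strategy matches the paper's, but the mechanism you are hoping for in part (a) does not exist, and this forces different choices of summands than the ones you propose.

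The Weinstein obstruction in \Cref{thm:exact_non_Stein_sphere} is \emph{not} of ``one-summand-suffices'' type. It works as follows: the sphere $(\SSS^{2n+1},\xi_L)$ built there is ADC and carries a Liouville filling $W_L$ with $SH^*(W_L)=0$ and $H^{2n-1}(W_L)\neq 0$; then \cite[Corollary~B]{Zho21} says that \emph{every} Liouville filling with vanishing $c_1$ has cohomology isomorphic to $H^*(W_L)$, hence is nonzero in degree $2n-1>n+1$ and cannot be Weinstein. There is no known statement asserting that if $Y_1\# Y_2$ is Weinstein fillable then each $Y_i$ is, so you cannot simply ``import'' the obstruction from the $\xi_L$ summand.

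To apply \cite[Corollary~B]{Zho21} to the connected sum you need the \emph{entire} manifold $\eta_k$ to be ADC and to admit a Liouville filling with vanishing symplectic cohomology. Neither is guaranteed by your choices: a general Weinstein fillable $\xi_W$ need not be ADC, and its filling need not have $SH^*=0$; likewise Brieskorn-type $\xi_k$ typically have $SH^*\neq 0$ on their Milnor fibres, so the boundary connected sum filling will not have vanishing $SH^*$ either. The paper fixes both issues simultaneously: it replaces $\xi_W$ by a \emph{flexibly} fillable $\xi_{flex}$ in the same almost contact class (Eliashberg's $h$-principle, using that $M$ is Weinstein fillable), and takes the $\xi_k$ to be Lazarev's infinitely many homotopically standard \emph{flexibly} fillable spheres from \cite{Laz20}. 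All summands are then ADC with $SH^*=0$ fillings, so each $\eta_k$ is ADC with a natural Liouville filling $W_k$ satisfying $SH^*(W_k)=0$ and $H^{2n-1}(W_k)\neq 0$; \cite[Corollary~B]{Zho21} now gives the Weinstein obstruction for every $k$.

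The same device handles part (b): since $H^*(W_k)$ is forced on every $c_1=0$ filling, it is a contact invariant of $\eta_k$, and Lazarev's spheres are chosen precisely so that the $H^*(W_k)$ are pairwise distinct. No separate connected-sum formula for contact homology is needed.
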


Again, as a corollary, we obtain that spheres of dimension at least $7$ admit \emph{infinitely} many Liouville but not Weinstein fillable contact structures which are homotopically standard.

\subsection*{Overview of proofs} 
To prove \Cref{thm:tightnonfill}, i.e.\ to obtain a tight and non-fillable contact structure on $\mathbb S^{2n+1}$, we give a geometric construction, as follows. 
We consider the Bourgeois construction on a sphere $\mathbb{S}^{2n-1}$ (with respect to a suitable open book), which yields a contact structure on $\mathbb{S}^{2n-1}\times \mathbb{T}^2$, and then we kill the topology coming from the $\mathbb{T}^2$-factor via isotropic and flexible surgeries to obtain a contact $2n+1$ dimensional sphere. 
The key point, which we now discuss briefly, is to ensure that tightness and non-fillability of the starting Bourgeois contact manifold are preserved under the contact surgeries.

\smallskip
\textit{Tightness.} In general, tightness is known to be preserved under surgery only in dimension $3$ by a result of Wand \cite{MR3418529}.
Here, we circumvent the lack of an analogous result in higher dimensions by using the stronger notion of \emph{algebraic tightness}, i.e.\ non-vanishing of contact homology, and the fact that this is known to be preserved under contact surgeries and to imply tightness (see \Cref{prop:property}). 
Therefore, any contact manifold obtained via contact surgery from an algebraically tight one is in particular (algebraically) tight.

In our situation, it hence suffices to arrange that the starting Bourgeois contact structure is algebraically tight, which can be done as follows. 
When $n=2$, we follow the same strategy in \cite{BGM}, leveraging the existence of a certain exact cobordism with hypertight concave end and convex end, the desired Bourgeois manifold, and appealing to the functoriality of contact homology. When $n\ge 3$, we can make the Bourgeois construction satisfy certain index-positivity condition (more precisely, one can arrange them to be {\em $1$-asymptotically dynamically convex}, as introduced in \cite{Laz20,Zho21}; see \Cref{lem:kADC_stable}), which implies algebraic tightness.

\smallskip

\textit{Non-fillability.} The obstruction to fillability is derived from restrictions of fillings for certain flexibly Weinstein fillable contact manifolds. Taking the celebrated Eliashberg--Floer--McDuff theorem \cite{McD91} as a starting point, many contact manifolds are expected to have unique Liouville/symplectically aspherical fillings, at least up to diffeomorphism. 
Up until now, results in this direction mainly apply to the contact boundary of a split manifold of the form $V\times \D^2$, as well as those obtained via subcritical/flexible surgeries; cf.\ \cite{OV,BGZ,Zho21,product,BGM,GKZ}. If one considers the more general class of strong fillings, these are no longer unique, as one can take a blow-up of any given filling without affecting the boundary. 

However, it is plausible that all the ambiguity of fillings of $\partial(V\times \D^2)$ only comes from birational surgeries like blow-ups, which will increase the complexity of the topology of the filling; see \cite[\S 8]{Zho21} for supporting evidence. 
Moreover, Eliashberg \cite{Eli90} showed in dimension $3$, that if $W$ is a symplectic filling of $Y\#Y'$, then $W$ is obtained from attaching a $1$-handle to a symplectic filling $W'$ of $Y \sqcup Y'$ (which might be connected, cf.\ \cite{Bow_exact}). Although the higher-dimensional analogue has not been established, partial generalizations were obtained by Ghiggini-Niederkr\"{u}ger-Wendl \cite{GNW}. Incarnations of these expectations at the level of homology lead to \Cref{thm:unique}, putting homology constraints on putative strong fillings, which serve as our obstructions to the existence of strong fillings. 

In \cite{BGM}, the first three authors used moduli spaces of closed spheres that appear after applying a certain capping procedure, similar as to what is done in \cite{MNW} to obstruct fillability of certain Bourgeois contact manifolds. In this paper, we instead use a partial cap (described in \Cref{prop:cap_gen}) to build a strong cobordism from the  Bourgeois contact manifold as well as the exotic sphere to a flexibly fillable manifold, for which we can apply \Cref{thm:unique}. The full proof does require that symplectic cohomology can be defined for general strong fillings, which has now been established, for instance, using Pardon's VFC package \cite{pardon2019contact}.  

\smallskip

For the proof of \Cref{thm:tightnonfill_general}, one simply takes the original tight contact manifold, and performs a connected sum with the exotic sphere of \Cref{thm:tightnonfill}.
The subtle point is to ensure that tightness and non-fillability are preserved under this connected sum. 
The proof of non-fillability uses the fact that the homological obstructions can be made to persist under connected sum, as explained above. 
Tightness is again reduced to algebraic tightness: one can assume without loss of generality that the original contact manifold is strongly fillable (otherwise there is nothing to prove), and therefore it is algebraically tight; then, we can appeal to the fact (see \Cref{prop:property}) that algebraic tightness is preserved under connected sum, that leads in particular to tightness of the considered connected sum, as desired.

\smallskip

The proof of \Cref{thm:exact_non_Stein_sphere} yielding obstructions to Weinstein fillings is technically simpler. The construction is the following: we start from a Liouville domain of the form $V^{2n}=N\times [-1,1]$ as constructed by \cite{MNW}. We let $M=\partial(V\times \mathbb D^2)$, which admits a flexible Weinstein cobordism to a homotopy sphere by \Cref{lem:surg_parallel}. Moreover, up to taking finitely many self-connected sums, it can be assumed to be the standard smooth sphere. The cohomology of the filling resulting from stacking the cobordism on top of $V\times \mathbb D^2$ is non-trivial in degree $2n-1$, which evaluates non-trivially on the fundamental class of $N$. By results in \cite{Zho21}, the cohomology of any Weinstein filling of this contact sphere is the same as the cohomology of the natural Liouville one, so in particular non-trivial in degree $2n-1>n+1$ (as $n\ge 3$), which obstructs the Weinstein property. Therefore, we obtain a Liouville but non-Weinstein fillable contact structure. The standard almost contact structure is also achieved by a connected sum trick.

\medskip

Finally, the examples in \Cref{thm:tightnonfill_general_infinite} are also obtained by considering connected sums of the examples from \Cref{thm:tightnonfill_general} with some flexibly filled examples that are constructed in a somewhat ad-hoc manner.
Tightness and non-fillability follow from the same line as for \Cref{thm:tightnonfill_general}, and the tool used to distinguish them up to contactomorphism is positive symplectic cohomology, which is a contact invariant in this setting thanks to $1$-asymptotic dynamical convexity and can be defined without using a Liouville filling by \cite{CieOan18}.
Similarly, \Cref{thm:exact_non_Stein_general} follows from connect-summing the examples of \Cref{thm:exact_non_Stein_sphere} with infinitely many homotopically trivial flexibly fillable contact spheres from \cite{Laz20}, and they are distinguished using again positive symplectic cohomology but this time of their natural Liouville fillings, that is an invariant of the contact structure by \cite{Laz20}.

\subsection*{Outline of Paper} \Cref{sec:Bourgeois} contains important preliminaries and background, including a description of the Bourgeois construction together with a result concerning dynamical convexity of its Reeb vector field, as well as statements about the existence of strong cobordisms from contact manifolds, obtained from subcritical (resp.\ flexible) surgeries on $\SSS^1-$equivariant ones, to subcritically (resp.\ flexibly) fillable ones.

In \Cref{sec:alg_tight}, we describe the notion of algebraic tightness, together with relevant properties, and prove that the Bourgeois construction is a rich source of contact manifolds satisfying this property. 

\Cref{sec:obstr_fillings} contains results on obstructions to strong fillability via symplectic cohomology and \Cref{sec:geometric_construction} contains the proof of \Cref{thm:tightnonfill} and  \Cref{thm:tightnonfill_general,thm:tightnonfill_general_infinite}. 
Lastly, \Cref{sec:exact_non-Weinstein} contains the proofs of \Cref{thm:exact_non_Stein_sphere,thm:exact_non_Stein_general}.

\subsection*{Acknowledgements}
The authors are grateful to Peter Albers and Yasha Eliashberg, for productive discussions and interest in the project,  to Fr\'ed\'eric Bourgeois, Roger Casals, Otto van Koert, Oleg Lazarev, Alex Oancea, for helpful feedback on an earlier version of this paper, and to anonymous referees for many suggestions that improved this paper.

J. Bowden gratefully acknowledges the support from the Heisenberg-Grant (BO 4423/4-1) of the German Research Council.
F.\ Gironella is supported by the region Pays de la Loire, via the
project Étoile Montante 2023 SymFol, and gratefully acknowledges the support of the Sonderforschungsbereich Higher Invariants 1085.
A.\ Moreno received support from the National Science Foundation under Grant No.\ DMS-1926686, and is currently supported by the Sonderforschungsbereich TRR 191 Symplectic Structures in Geometry, Algebra and Dynamics, funded by the DFG (Projektnummer 281071066 – TRR 191), and also by the DFG under Germany's Excellence Strategy EXC 2181/1 - 390900948 (the Heidelberg STRUCTURES Excellence Cluster). 
Z.\ Zhou is supported by National Key R\&D Program of China under Grant No.2023YFA1010500 and National Natural Science Foundation of China under Grant No.\ 12288201 and 12231010. 
This project has also received funding from the European Research Council (ERC) under the European Union’s Horizon 2020 research and innovation programme (grant agreement No.\ 772479), as well as from the Agence Nationale de la Recherche (ANR) under the Project COSY (ANR-21-CE40-0002-04).

\section{Bourgeois contact manifolds}\label{sec:Bourgeois} 

In this section, we recall the Bourgeois construction \cite{Bo}, following the presentation that was given in \cite{BGM}. 

Consider a closed, oriented, connected smooth manifold $M^{2n-1}$ and an open book decomposition that we denote $(B,\theta)$, together with a defining map $\Phi\co M\rightarrow \RR^2$ such that each $z\in\Int(\DD^2)$ is a regular value. 
Here, $B\subset M$ is a closed codimension-2 submanifold, $\theta \colon M \backslash B \rightarrow \mathbb{S}^1$ is a fiber bundle, and $\Phi$ is such that $\Phi^{-1}(0)=B$ and $\theta=\Phi/\norm{\Phi}$.
Let us also denote by $\rho$ the norm $\vert \Phi\vert$.

Recall that a $1$-form $\alpha$ on $M$ is said to be \textit{adapted} to $\Phi$ if it induces a contact structure on the regular fibers of $\Phi$ and if $d\alpha$ is symplectic on the fibers of $\theta=\Phi/\norm{\Phi}$.
In particular, if $\xi$ is a contact structure on $M$ supported by $(B,\theta)$, in the sense of \cite{Gir}, then there is such a pair $(\alpha,\Phi)$ with $\alpha$ defining $\xi$ as follows from the definition.

\begin{theorem}[Bourgeois \cite{Bo}]
	\label{thm:bourgeois}
	Let $(B,\theta)$ denote an open book decomposition of $M^{2n-1}$, represented by a map $\Phi=(\Phi_1,\Phi_2)\co M \rightarrow \RR^2$ as above, and let $\alpha$ be a $1$-form adapted to $\Phi$.
	Then, $\alpha_{BO} \coeq \alpha + \Phi_1 d q_1 - \Phi_2 dq_2$ is a contact form on $ M\times \TT^2$, where $(q_1,q_2)$ are coordinates on $\TT^2$.
\end{theorem}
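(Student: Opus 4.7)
The plan is a direct computation showing that $\alpha_{BO} \wedge (d\alpha_{BO})^n$ is a nowhere-vanishing top form on $M \times \TT^2$. I would begin by splitting $d\alpha_{BO} = d\alpha + \beta$, where $\beta := d\Phi_1 \wedge dq_1 - d\Phi_2 \wedge dq_2$. Since $\beta$ contains each of $dq_1, dq_2$ at most once, a quick computation gives $\beta^2 = 2\, d\Phi_1 \wedge d\Phi_2 \wedge dq_1 \wedge dq_2$ and $\beta^3 = 0$, so the binomial expansion truncates to
\[(d\alpha_{BO})^n = (d\alpha)^n + n\,(d\alpha)^{n-1}\wedge \beta + n(n-1)\,(d\alpha)^{n-2}\wedge d\Phi_1 \wedge d\Phi_2 \wedge dq_1 \wedge dq_2.\]

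Wedging with $\alpha_{BO} = \alpha + \Phi_1 dq_1 - \Phi_2 dq_2$ and retaining only the piece containing $dq_1 \wedge dq_2$ (the only part that can contribute to a volume form on $M\times \TT^2$), a careful sign check should yield $\alpha_{BO} \wedge (d\alpha_{BO})^n = n\, \Omega \wedge dq_1 \wedge dq_2$ with
\[\Omega := (n-1)\, \alpha \wedge (d\alpha)^{n-2} \wedge d\Phi_1 \wedge d\Phi_2 + (d\alpha)^{n-1} \wedge (\Phi_1\, d\Phi_2 - \Phi_2\, d\Phi_1).\]
It then remains to show that the $(2n-1)$-form $\Omega$ is a volume form on $M$.

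I would verify this in two regimes. Away from $B$, introducing polar coordinates $\Phi_1 = \rho \cos\vartheta$, $\Phi_2 = \rho \sin\vartheta$ in the target $\RR^2$ gives $d\Phi_1 \wedge d\Phi_2 = \rho\, d\rho \wedge d\vartheta$ and $\Phi_1\, d\Phi_2 - \Phi_2\, d\Phi_1 = \rho^2\, d\vartheta$, so that
\[\Omega = \rho\,\bigl[(n-1)\,\alpha \wedge (d\alpha)^{n-2}\wedge d\rho + \rho\,(d\alpha)^{n-1}\bigr]\wedge d\vartheta.\]
The first summand inside the bracket is a top form on $M$ because $\alpha\wedge (d\alpha)^{n-2}$ restricts to the contact volume form on each regular fiber of $\Phi$ (by adaptation), and $d\rho, d\vartheta$ supply the two transverse directions; the second is a top form because $(d\alpha)^{n-1}$ restricts to the symplectic volume on each fiber of $\theta$, again by adaptation. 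The two induce the same orientation on $M$, which is the orientation compatibility implicit in the adapted condition, so their positive combination is a volume form for $\rho > 0$. Near $B$ the polar coordinates degenerate, so I would work with $(\Phi_1,\Phi_2)$ directly: since $\Phi$ vanishes on $B$, the second summand of $\Omega$ is zero there, while the first reduces to $(n-1)\, \alpha\wedge (d\alpha)^{n-2}\wedge d\Phi_1\wedge d\Phi_2$. Because $0$ is a regular value of $\Phi$, the form $d\Phi_1\wedge d\Phi_2$ is nonzero along the rank-two conormal of $B$; and because $B = \Phi^{-1}(0)$ is itself a regular fiber, $\alpha|_B$ is contact, so $\alpha\wedge (d\alpha)^{n-2}|_B$ is a volume form on $B$. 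Together these fill out a volume form on $T_pM$ at every $p \in B$.

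The main obstacle I expect is the sign bookkeeping: both in deriving the explicit formula for $\Omega$ (keeping track of how the odd forms $dq_i$ commute past the even-degree factors $(d\alpha)^{n-1}$ and $(d\alpha)^{n-2}$) and in verifying that the two summands of $\Omega$ do not cancel away from $B$. The latter reduces to unpacking the orientation convention implicit in the adapted condition and checking that the contact orientation on the fibers of $\Phi$ and the symplectic orientation on the fibers of $\theta$ come from a single global orientation of $M$; once this is fixed, the positivity of $\Omega$ follows automatically.
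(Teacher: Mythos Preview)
The paper does not supply a proof of this theorem: it is quoted with attribution to Bourgeois's original paper \cite{Bo} and immediately used. So there is no in-paper argument to compare against; your direct computation is exactly the standard verification one finds in Bourgeois's work, and it is correct.

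A couple of minor remarks on the write-up. First, the terms you discard ``because they do not contain $dq_1\wedge dq_2$'' are in fact identically zero for degree reasons on $M^{2n-1}$ (e.g.\ $(d\alpha)^n=0$ and $\alpha\wedge(d\alpha)^{n-1}\wedge d\Phi_i=0$), so your displayed formula for $\alpha_{BO}\wedge(d\alpha_{BO})^n$ is an honest equality, not just an extraction of the top component; it is worth saying this explicitly. Second, the orientation compatibility you flag as the main obstacle is indeed the only real content left once the algebra is done, and it is built into the notion of an \emph{adapted} form (equivalently, of $\alpha$ being supported by the open book in Giroux's sense): the page orientation from $(d\alpha)^{n-1}$ followed by $d\theta$ gives the orientation of $M$, and the induced orientation on regular fibers of $\Phi$ agrees with the contact orientation from $\alpha\wedge(d\alpha)^{n-2}$. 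With that convention fixed, both summands of $\Omega$ are non-negative multiples of the same volume form and do not vanish simultaneously, which is what you want. If you prefer a sanity check, the local model $\alpha=h_1(r)\alpha_B+h_2(r)\,d\theta$ recorded in the paper just after the theorem makes the positivity of $\Omega$ near $B$ completely explicit via the listed conditions on $h_1,h_2$.
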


The contact form $\alpha_{BO}$ on $M\times\TT^2$ will be called the \textit{Bourgeois form} associated to $(\alpha,\Phi)$ in what follows. It will be useful to also think in terms of abstract open books, i.e.\ in terms of the Liouville page $\Sigma = (\Sigma,d\lambda)$ and the compactly supported symplectic monodromy $\psi$. 
In this case, we write $OB(\Sigma,\psi)$ for the contact manifold obtained via the Thurston-Winkelnkemper construction, and we denote $BO(\Sigma,\psi)=(M\times \mathbb{T}^2,\ker \alpha_{BO})$.
We invite the reader to consult \cite{LMN} for details on the correspondence between Bourgeois contact structures for geometric and abstract open books.

\begin{remark}
\label{rem:almost_cont}
The almost contact structure underlying the Bourgeois contact manifold is (up to homotopy) just the sum 
$$\left(\xi,d\alpha\right) \oplus (T\TT^2,\Omega),$$
where $\Omega$ is an area form on $\mathbb{T}^2$. 
This can be seen via an explicit homotopy; see, for instance \cite[Lemma 4.1.(1)]{Gir20}. 
In particular, if $c_1(\xi)$ is torsion, then the same is true for the Bourgeois contact structure, independently of the auxiliary choice of open book.
\end{remark}

\subsection{Reeb dynamics of Bourgeois contact structures} 
\label{sec:reeb_dynamics}

In this section, we consider the Reeb dynamics of the Bourgeois contact forms more closely. 
First of all applying a lemma of Giroux (see e.g.\ \cite[Section 3]{DGZ14}), on a neighborhood of the form $\calN = B \times \DD^2\times  \TT^2$ of $B$ where $\rho=\vert \Phi\vert$ coincides with the radial coordinate of $\DD^2$, the Bourgeois contact form looks like 
\begin{align*}
\alpha_{BO}&= \alpha +\rho(r)(\cos(\theta)\rd q_1- \sin(\theta)\rd q_2)\\
&= h_1(r)\alpha_B + h_2(r)\rd \theta+ r(\cos(\theta)\rd q_1- \sin(\theta)\rd q_2).
\end{align*}
Here, $\alpha_B=\alpha\vert_B$ is the contact form on the binding, $(r,\theta)$ are polar coordinates on $\DD^2$, and the functions $h_1,h_2$ satisfy the following conditions:
\begin{enumerate}[label=\roman*.]
		\item $h_1(0)>0$ and $h_1(r)=h_1(0) - r^2$ near $r=0$;
		\item  $h_2(r)\sim r^2$ for $r\rightarrow0$;
		\item  $\frac{h_1^{n-1}}{r}\,(h_1h_2'-h_2h_1')>0$  for $r\geq 0$ (contact condition);
		\item $h_1'(r)<0$ for $r>0$ (symplectic condition on the pages). 
\end{enumerate}

\noindent
We also point out that there is a natural (orientation-preserving) diffeomorphism
\begin{equation}
\label{eqn:isomorph_cotangent}
    \begin{split}
        \calN = B\times \DD^2\times\TT^2 &\to B \times D^*\TT^2 \\
        (b,p_1,p_2,q_1,q_2)&\mapsto (b,p_1,q_1,-p_2,q_2).
    \end{split}
\end{equation}

We have the following global description of the Reeb vector field (c.f.\ \cite{Bo,Gir20}) on a Bourgeois manifold.
\begin{lemma}
	\label{lemma:Reeb_vf}
	The Reeb vector field of the contact form $\alpha_{BO}$ is given globally by
	\begin{equation}\label{eqn:reeb_vf}
	R_{BO}=\mu(r)R_B + \nu(r) [\cos(\theta)\partial_{q_1}- \sin(\theta)\partial_{q_2}],
	\end{equation}
	where 	
	$R_B$ is the Reeb vector field of the restriction $\alpha_B=\alpha\vert_B$, and the coefficients are
	\begin{equation*} \mu \, =\, \begin{cases} \frac{\rho'}{\rho'h_1-\rho h_1'} & \text{in } \neigh \\  0 & \text{elsewhere } \end{cases} 
	\, \, \text{ and }\,\, 
	\nu \, = \begin{cases} \frac{-h_1'}{\rho' h_1-\rho h_1'} & \text{in } \neigh \\  1 & \text{elsewhere } \end{cases} \text{ .}
	\end{equation*}
\end{lemma}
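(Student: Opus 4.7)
The proof is a direct verification of the two defining properties of the Reeb vector field: $\alpha_{BO}(R_{BO})=1$ and $\iota_{R_{BO}}\,d\alpha_{BO}=0$. The plan is to carry this out explicitly inside the binding neighborhood $\neigh$, where the Giroux normal form makes the contact form entirely explicit, and then handle the complement separately using the global expression $\alpha_{BO}=\alpha+\Phi_1\,dq_1-\Phi_2\,dq_2$.

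Inside $\neigh$, I would first expand $d\alpha_{BO}$ term by term from
\begin{equation*}
\alpha_{BO} = h_1(r)\alpha_B + h_2(r)\,d\theta + r\cos\theta\,dq_1 - r\sin\theta\,dq_2,
\end{equation*}
producing contributions of the form $h_1'\,dr\wedge\alpha_B$, $h_1\,d\alpha_B$, $h_2'\,dr\wedge d\theta$, together with the four cross-terms $\cos\theta\,dr\wedge dq_1$, $-r\sin\theta\,d\theta\wedge dq_1$, $-\sin\theta\,dr\wedge dq_2$, $-r\cos\theta\,d\theta\wedge dq_2$ coming from differentiating the two $\TT^2$-pieces. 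I would then substitute the ansatz $R_{BO}=\mu(r)R_B+\nu(r)X$, where $X$ is the stated vector field tangent to the $\TT^2$-factor, and evaluate both defining conditions using $\alpha_B(R_B)=1$, $\iota_{R_B}d\alpha_B=0$, and the fact that $R_B$ is tangent to $B$ (so it annihilates $dr$, $d\theta$, $dq_1$, $dq_2$). Grouping $\iota_{R_{BO}}d\alpha_{BO}$ by basis one-form ($\alpha_B$, $dr$, $d\theta$, $dq_1$, $dq_2$), most components vanish automatically by orthogonality; the surviving $dr$-component, together with the normalization $\alpha_{BO}(R_{BO})=1$, yields a $2\times 2$ linear system for $(\mu,\nu)$ as functions of $r$. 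The contact condition (iii) guarantees that the coefficient matrix has nonvanishing determinant $\rho'h_1-\rho h_1'$, so the system has a unique solution, which one verifies matches the stated rational expressions in $h_1,h_1',\rho,\rho'$.

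On the complement of $\neigh$, the verification proceeds similarly using $\alpha_{BO}=\alpha+\rho\cos\theta\,dq_1-\rho\sin\theta\,dq_2$ and the adaptedness of $\alpha$ to $\Phi$, which controls $d\alpha$ on the pages of $\theta$; the values $\mu=0$, $\nu=1$ then fall out of an analogous but simpler algebraic computation, and continuity across the interface between the two regions is automatic from the construction of the local model.

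The main obstacle is entirely bookkeeping: tracking the many cross-terms in $d\alpha_{BO}$ and their pairings with $R_{BO}$, and making sure all signs are correct. There is no conceptual subtlety, but the computation has many pieces and demands care; organizing everything by basis one-form and verifying cancellations component by component is the cleanest strategy.
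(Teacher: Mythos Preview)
The paper does not actually supply a proof of this lemma; it is stated with a reference to \cite{Bo,Gir20} and then used. Your plan of direct verification---computing $d\alpha_{BO}$ in the Giroux normal form on $\neigh$, contracting with the ansatz $\mu(r)R_B+\nu(r)X$, and solving the resulting linear system for $(\mu,\nu)$---is exactly the computation carried out in those references, so your approach is correct and matches the intended one. One small caution: when you invoke ``contact condition (iii)'' to ensure the determinant $\rho' h_1-\rho h_1'$ is nonzero, note that (iii) in the paper concerns $h_1 h_2'-h_2 h_1'$, not this quantity; the nonvanishing you need actually comes from (i) and (iv) (and, on the complement, implicitly from a normalization of $|\Phi|$ away from the binding that the cited references make explicit but this paper leaves tacit).
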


\subsection{ADC Bourgeois contact manifolds}\label{subsec_ADC}
In this section, we will assume that the first Chern class $c_1(\xi)$ of all contact structures is \emph{torsion}. In this case, there is a well-defined (rational) Conley-Zehnder index $\mu_{CZ}(\gamma)$ for any non-degenerate contractible periodic Reeb orbit $\gamma$. If the contact manifold is of dimension $2n-1$, then the \textit{SFT degree} of $\gamma$ is defined as $$|\gamma| \coloneqq n-3 + \mu_{CZ}(\gamma).$$ Notice that this does \textit{not} correspond to the degree in ($S^1$-equivariant) symplectic cohomology of $\gamma$ when viewed as a periodic Hamiltonian orbit, which is just given by $n-\mu_{CZ}$.

We now recall the following definition from \cite{Zho21}, which generalizes the one from \cite{Laz20}. In what follows, if $\alpha_1,\alpha_2$ are contact forms on a manifold $M$, we write $\alpha_1>\alpha_2$ if there exists a smooth function $f:M\rightarrow \mathbb R$ which satisfies $f>1$, such that $\alpha_1=f\alpha_2$.
\begin{definition}
\label{def:kADC}
    A contact structure $(M,\xi)$ is called $k$ asymptotically dynamically convex ($k$-ADC) if there is a sequence of contact forms $\alpha_1 > \alpha_2 > \cdots > \alpha_i > \dots  $, and a sequence of real numbers $D_i$ with $D_i \to \infty$, so that all \textbf{contractible} periodic orbits of $\alpha_i-$action $< D_i$ are non-degenerate and have degree\footnote{From now on, unless specified otherwise, for simplicity \emph{degree will mean the SFT degree}.} $> k$.
\end{definition}

In particular, $0$-ADC as in \cite{Zho21} is just ADC as in \cite{Laz20}. 
An important special case is when there is a contact form $\alpha$ that is \textit{index-positive}, meaning, following \cite[Section 9.5]{CieOan18}, that all contractible periodic orbits are non-degenerate and have positive degree for $\alpha_i \equiv \alpha$ the given contact form. Here, we introduce a slight generalization of index-positivity as follows:

\begin{definition}\label{def:index-pos}
    A contact form $\alpha$ on a contact manifold $(Y,\xi)$ with $c_1(\xi)$-torsion, is called \emph{$k$-index positive}, if the contact form $\alpha$ is Morse-Bott non-degenerate along contractible Reeb orbits and $|\gamma|:=n-3+\mu_{LCZ}(\gamma)$
    is strictly bigger than $k$ for all contractible Reeb orbits $\gamma$, where $\mu_{LCZ}$ is the lower semicontinous extension of the Conley-Zehnder index. 
\end{definition}
We recall that the degree $\vert \cdot \vert$ in \Cref{def:index-pos} is known as the \emph{lower SFT degree}, as introduced by McLean \cite{Reeb}. Recall also from \cite[Definition 1.7]{Bothesis} that the Reeb flow $\varphi_t$ of a contact form $\alpha$ is called \emph{Morse-Bott non-degenerate} if closed periodic orbits with a given action $T$ come in closed smooth submanifolds $N_T$ so that the rank of $\rd\alpha\vert_{N_T}$ is locally constant and $TN_T=\ker(\rd\varphi_T-\Id)$ at all points of $N_T$. 

We also point out that the index $\mu_{LCZ}$ is nothing else than the lowest Conley-Zehnder index of the Reeb orbits obtained from the Morse-Bott family after breaking the Morse-Bott symmetry using the perturbations in \cite[Lemma 2.3]{Bothesis}. 
Indeed, $\mu_{LCZ}(\gamma)$ equals $\mu_{RS}(\gamma)-\frac{1}{2}(\dim N_T-1)$ by \cite[Lemma 2.4]{Bothesis}, where $\mu_{RS}$ is the Robbin-Salamon index \cite{RS} (it is called generalized Maslov index in \cite[Section 5.2.2]{Bothesis}, see \cite{Gutt} for a thorough discussion of all these indices); see also \cite[\S 2]{Reeb}. 
By \cite[Lemmas 2.3 and 2.4]{Bothesis}, $k$-index positive contact manifolds are $k$-ADC, as one can perturb the $k$-index positive Morse-Bott contact form into a shrinking sequence of contact forms satisfying the conditions in the definition of $k$-ADC.

A very useful aspect of the ADC condition is that it is preserved under flexible surgery \cite{Laz20}, whereas index-positivity may not be.  
Lastly, we point out that the original definition of ADC in \cite{Laz20} used a \emph{non-increasing} sequence of contact forms; however, one can always find a \emph{strictly decreasing} sequence, as explained in \cite[Remark 3.7.(4)]{Laz20}.

\begin{example}[Brieskorn Manifolds]
\label{ex:brieskorn}
Consider the $(2n-1)$-dimensional Brieskorn spheres $\Sigma_n(k,2, \cdots, 2)$ given as the link of the $A_{k-1}$-singularity, i.e.\
\[
\Sigma_n(k,2,\cdots, 2)=\{z_0^k +z_1^2 + \cdots+ z_n^2 = 0  \}\cap S^{2n+1}\subset \CC^{n+1} .
\]
This is an index-positive contact manifold. 
The degrees of the generators are at least $2n-4$, provided that $n \ge 3$. 
This follows from \cite{U99} in case $n=2m+1$ is odd, and from \cite[Section 3]{vKo08} in general.

More generally, one can consider Brieskorn spheres $$\Sigma_n(p_1,p_2, \cdots, p_{n-2},2,2,2)$$ for odd primes $p_i$, which according to \cite[Section 4.1]{vKo08}, is index positive in the Morse-Bott case as in \Cref{def:index-pos}.
\end{example}

\begin{example}[3-dimensional case]
\label{ex:brieskorn_3d}
For the case of $3$-dimensional Brieskorn manifolds, the computations are more involved as the manifolds are not simply connected. One can view links of $A_{k-1}$-singularities as quotients of the standard contact $3$-sphere so that contractible periodic orbits on the links can be lifted to the $3$-sphere. Those contractible orbits then have Conley-Zehnder indices at least $3$. The corresponding contact form on the link is Morse-Bott as it is the quotient of the round contact form on $S^3$, which is Morse-Bott. However,  when we construct a $7$-dimensional Bourgeois contact structure $\SSS^5\times \TT^2$ with the $3$-dimensional $A_{k-1}$-singularity link as the binding, these non-contractible orbits in the binding become contractible in the Bourgeois contact manifold. Since $H_1(\Sigma(k,2,2);\Z)$ is torsion, all Reeb orbits, including those non-contractible orbits, can be assigned well-defined rational Conley-Zehnder indices \cite[\S 4]{Reeb}, which are at least $1$ by the computation in \cite[\S 2.1]{McKay}. In view of this, the 3-dimensional links of $A_{k-1}$-singularities are $(-1)$-index positive.
\end{example}

We next observe that index-positivity is stable under the Bourgeois construction in the following sense:

\begin{lemma}
\label{lem:kADC_stable}
Assume that $\alpha$ is a contact form adapted to an open book decomposition with defining map $\Phi\colon M \to \RR^2$ on $M$, and that its restriction $\alpha_B$ to the binding $B$ is $k$-index positive and $B$ is simply connected.
Then, the Bourgeois form $\alpha_{BO}$ constructed as in \Cref{thm:bourgeois} is $(k+2)$-index positive.

In particular, the associated Bourgeois contact structure on $M\times \TT^2$ is $(k+2)$-ADC.
\end{lemma}

\begin{proof}
An explicit computation using the expression in \Cref{lemma:Reeb_vf} shows that, on the neighborhood $\calN=B\times \DD^2 \times \TT^2$, the flow $\psi^t$ of $R_{BO}$ at time $t>0$ starting at a point $(b_0,r_0,\theta_0,q_{1,0},q_{2,0})$ is given by
    \begin{equation*}
        b(t)=\psi_B^{t\mu(r_0)}(b_0) \, , \quad
        r(t)=r_0\, , \quad
        \theta(t)=\theta_0 \, , \quad
        q_1(t)= q_{1,0} + t\nu(r_0)\cos\theta_0 \, , \quad
        q_2(t) = q_{2,0} - t \nu(r_0) \sin\theta_0 \, ,
    \end{equation*}
    where $\psi_B^s$ is the Reeb flow of $\alpha_B$ on $B$ at time $s$.
    Similarly, on the complement of $\calN=B\times \DD^2\times \TT^2$, where $\nu= 1$ and $\mu =0$, the flow at time $t$ is just given, in coordinates $(x,q_1,q_2)\in M \times \TT^2$ and with starting point $(x_0,q_{1,0},q_{2,0})$, by 
    \begin{equation*}
        x(t)=x_0 \, , \quad
        q_1(t)= q_{1,0} + t\cos\theta_0 \, \quad
        q_2(t) = q_{2,0} - t \sin\theta_0 \, .
    \end{equation*}
    
    In particular, qualitatively, we have the following behavior.
    Closed Reeb orbits of $R_{BO}$ that touch $B\times \{0\}\times \TT^2$ are in fact contained in it, and they organize in $\TT^2$-parametric families, each coming from a closed Reeb orbit of $\alpha_B$.
    All the other closed Reeb orbits that lie in the complement of $B\times \{0\} \times \TT^2$ project as a linear closed curve in the $\TT^2$ factor.
    Hence, under our assumption of $B$ simply connected, the only contractible Reeb orbits of $R_{BO}$ are those of the form $\gamma_q=\gamma_B\times \{(0,q)\}$ with $\gamma_B$ a closed Reeb orbit of $\alpha_B$ and $q\in\TT^2$.

    Notice that the hypotheses on the function $h_1$ mean that $\nu(r)=\frac{2r}{h+r^2}$ for $r<1$, i.e.\ on $\calN = B\times \DD^2\times \TT^2$, where $h:=h_1(0)>0$. 
    Then if we use coordinates $(p_1,q_1,p_2,q_2)\in D^*\TT^2\simeq \DD^2\times \TT^2\ni (p_1,-p_2,q_1,q_2)$ as in \eqref{eqn:isomorph_cotangent}, we can rewrite the $q$-coordinates of the above described flow $\psi^t$ as
    $$q_1(t)=q_{1,0}+t\frac{p_{1,0}}{h+r_0^2},\quad q_2(t)=q_{2,0}+t\frac{p_{2,0}}{h+r_0^2}.$$
    It is then clear that the linearized flow along $B\times \{0\} \times \TT^2$ splits in the $B$ direction and $D^* \TT^2$ direction. 
    The component in the $D^* \TT^2$-direction of the linearized flow, that we denote by $\rho(t)$, is given, in the $(p_1,q_1,p_2,q_2)$-coordinates on $D^*\TT^2$, by
    $$\rho(t)=\left[\begin{array}{cccc}
         1 &  0 & 0 &  0\\
         \frac{t}{h} &  1 & 0 & 0 \\
         0 & 0 & 1 & 0\\
         0 & 0 & \frac{t}{h} & 1
    \end{array}\right].$$
    In particular, the linearized return map along $\gamma_q=\gamma_B\times\{(0,q)\}$ has, in the $D^* \TT^2$-direction, as $1$-eigenspace the $\TT^2$-direction (the zero section). 
    Since $\alpha_B$ is Morse-Bott, it follows that $\alpha_{BO}$ is also Morse-Bott along $B\times \{0\} \times \TT^2$. 
    We also have 
        $$\mu_{LCZ}(\gamma_p)=\mu_{LCZ}(\gamma_B)+\mu_{LCZ}(\{\rho(t)\mid 0\le t\le T\})$$
   where $T$ is the period of $\gamma_p$. Since the  Robbin-Salamon index of the shear matrix path $\rho(t)$ is $1$ \cite[Lemma 26 (7)]{Gutt}, we have $\mu_{LCZ}(\{\rho(t)\}_{0\le t\le T})=\mu_{RS}(\{\rho(t)\}_{0\le t\le T})-\frac{1}{2}\dim \ker (\rho(T)-\Id)=0$.  Thus the SFT degree on $M^{2n-1} \times \TT^2$ is  
    \[
    |\gamma_q| = (n+1)-3 + \mu_{LCZ}(\gamma_q) =(n-1)-3 + \mu_{LCZ}(\gamma_B) + 2= |\gamma_B|_{B}+2 \, .
    \]
    As $\vert\gamma_B\vert_{B}>k$ by assumption, we get $\vert\gamma_q\vert>k+2$.
    
    \medskip
    
    Finally, the second conclusion is a direct consequence of the perturbation procedure in \cite{Bothesis} as explained before the lemma.
    %
    %
    This concludes the proof.
\end{proof}

\begin{remark}\label{rmk:torsion}
    When $B$ is not simply connected, there might be Reeb orbits of $B$ which are non-contractible in $B$ but contractible in the ambient open book $OB(\Sigma,\psi)$.
    These orbits should then be included in the verification of the ADC/index-positive condition for the open book, even though they are not relevant for that of $B$. 
    In our applications, this will only happen in settings where $H_1(B;\Z)$ is torsion.
    In this case, if the contact structure on $B$ has torsion first Chern class, \emph{every} Reeb orbit can be assigned a well-defined rational Conley-Zehnder index \cite[\S 4]{Reeb}. In this case, one can then consider the notion of $k$-ADC manifolds but for \emph{all} Reeb orbits. 
    The computation in \Cref{lem:kADC_stable} is still valid for the rational Conley-Zehnder index. 
\end{remark}

We have the following examples that will be used in the proof of \Cref{thm:tightnonfill}.
\begin{example}\label{ex:Brieskorn_ADC}
Consider the Bourgeois contact manifold associated to the Milnor open book on $\SSS^{2n-1}$ coming from the $A_{k-1}$-singularity, whose binding is a Brieskorn sphere $B=\Sigma_{n-1}(k,2,\cdots,2)$ and whose page is Liouville homotopic to the Milnor fiber $V_{n-1}(k,2,\cdots,2)=\{z_0^k+z_1^2+\dots+z_{n-1}^2=\epsilon\}\cap \DD^{2n}$, for small $\epsilon>0$. When $n\ge 4$, $B$ is a simply connected $1$-index positive contact manifold according to \Cref{ex:brieskorn}. 
We thus obtain from \Cref{lem:kADC_stable} that the corresponding Bourgeois contact manifolds on $\mathbb S^{2n-1} \times \TT^2$ are $1$-ADC (and in fact even $3$-ADC, but $1$-ADC will be sufficient for our later purposes). 
When $n=3$, the corresponding Bourgeois contact manifolds on $\mathbb S^{5} \times \TT^2$ are also $1$-ADC by \Cref{ex:brieskorn_3d} and \Cref{rmk:torsion}.
\end{example}

\subsection{$\SSS^1-$invariant contact structures and convex decompositions} 
\label{sec:S1_invariant}

Bourgeois contact structures are $\TT^2$-invariant, and therefore in particular $\SSS^1$-invariant.
As discussed in \cite[Section 6]{DinGei12}, any $\SSS^1-$invariant contact structure on a manifold $V^{2n} \times \SSS^1$ induces a decomposition of the first factor into so-called {\it ideal Liouville domains}, as defined in \cite{GirouxIdealLiouvDom}.
In particular, we have a topological decomposition
\[
V\times \SSS^1 = V_+ \times \SSS^1 \bigcup \overline{V}_- \times \SSS^1.
\]
Any (constant) section $V\times\{pt\}$ is a convex hypersurface, and the decomposition $V = V_+ \cup \overline V_-$ is a convex decomposition.
In the case of the Bourgeois contact manifolds, the ideal Liouville pieces are given as products of the page of the initial open book with an annulus, i.e.\ $V_{\pm} = \Sigma \times \DD^*\SSS^1$. 
We refer to the existing literature for details about what just described: in \cite{GirouxIdealLiouvDom} and \cite[\S 5.2]{MNW} for ideal Liouville domains, in \cite{DinGei12} for $\SSS^1$-invariant contact structures in general, and in \cite[\S 6.1, 6.3]{BGM} for the case of Bourgeois structures as $\SSS^1$-invariant ones.
The pieces in this decomposition are round contactizations of ideal Liouville domains and are referred to as {\it Giroux domains} following \cite{MNW}.

\subsection{Capping Giroux domains} In \cite[Theorem B]{BGM}, certain homological obstructions to the symplectic fillability of Bourgeois contact manifolds were obtained. 
The argument of the proof uses a capping construction of Massot-Niederkr\"uger-Wendl \cite{MNW}. 
More precisely, whenever one finds an embedded Giroux domain of the form $G = X \times \SSS^1$, one can perform a \emph{blow down} construction as in \cite{MNW},
which topologically consists of removing the interior of $G$ and collapsing all circle fibers at the remaining boundary to points. 
By \cite[Section 5.1]{MNW}, the resulting smooth manifold $M_{bd}$ (where $bd$ stands for blown down) carries a contact structure that is well-defined (up to isotopy). 
The following special case of \cite[Theorem 6.1]{MNW} then says that this is achievable through a cobordism:
\begin{theorem}\label{thm:cap_MNW}
Let $M$ be a contact manifold containing a Giroux domain $G = X \times \SSS^1$. Then there is a strong symplectic cobordism from $M$ to the contact manifold $M_{bd}$ obtained by blow down of $G$.
\end{theorem}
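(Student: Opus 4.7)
The plan is to build the cobordism as a gluing $W = ([0,1]\times M) \cup W_{cap}$, where $W_{cap}$ is an explicit symplectic cap for the Giroux domain $G = X\times\SSS^1$, attached on top of $\{1\}\times G$. Topologically, take $W_{cap} = X\times\DD^2$: its boundary is the union of the \emph{bottom face} $X\times\partial\DD^2 \cong G$ (to be glued to $\{1\}\times G$ and absorbed into the interior after smoothing corners) and the \emph{top face} $\partial X\times\DD^2$, meeting at the corner $\partial X\times\SSS^1$. The top boundary of the resulting cobordism $W$ is then $(M\setminus\Int(G))\cup(\partial X\times\DD^2)$, which one identifies with $M_{bd}$: each circle fiber $\{x\}\times\SSS^1 \subset \partial G$, for $x\in\partial X$, is filled by a disk $\{x\}\times\DD^2$ whose center plays the role of the collapsed point.

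\medskip

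To produce the symplectic form on the cap, use the ideal Liouville description of $X$: pick a defining function $f\geq 0$ of $\partial X$ and a smooth $1$-form $\lambda_X$ on $X$ so that $\omega_X = d(\lambda_X/f)$ is the ideal Liouville symplectic form on $\Int(X)$ and $\lambda_X|_{\partial X}$ is the induced contact form on the ideal boundary. The Giroux contact form on $G$ can then be taken of the form $\alpha = \lambda_X + f\,d\theta$, which is smooth up to $\partial G = \partial X\times\SSS^1$. On $W_{cap}=X\times\DD^2$, with polar coordinates $(r,\theta)$ on $\DD^2$, build a symplectic form of schematic shape
\[
\omega_{cap} \;=\; d\bigl(g(r)\,\lambda_X\bigr) + d\bigl(H(r)\,d\theta\bigr),
\]
with $g,H\co[0,1]\to\RR$ smooth and $H(r) = O(r^2)$ at $r=0$, so that $d(H\,d\theta)$ extends smoothly across the origin of $\DD^2$ to a multiple of the Euclidean area form. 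Choose $(g,H)$ so that, near $r=1$, $\omega_{cap}$ coincides with the standard symplectization $d(e^t\alpha)$ of $\alpha$ in a collar coordinate $t$ transverse to the bottom face, thereby making the bottom face a concave boundary of $W_{cap}$ that glues compatibly to the top of $[0,1]\times M$. On the top face $\partial X\times\DD^2$, use that $f \equiv 0$ to extract from a primitive of $\omega_{cap}$ a Liouville vector field pointing outward, and verify that the induced contact structure extends smoothly across the collapse locus $\partial X\times\{0\}$ to match the blow-down contact structure of \cite[Section 5.1]{MNW}.

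\medskip

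Assembling these pieces -- gluing the bottom face of $W_{cap}$ to $\{1\}\times G$ along the identification $G = X\times\partial\DD^2$ and smoothing the corners along $\partial X\times\SSS^1$ -- yields the desired strong cobordism $W$ from $M$ to $M_{bd}$. The main technical obstacle is the joint calibration of the radial profiles $(g,H)$ with the ideal Liouville data $(\lambda_X,f)$ so that three conditions hold simultaneously: $\omega_{cap}$ is non-degenerate throughout $W_{cap}$ (the delicate point being non-degeneracy near $\partial X$, where $\lambda_X$ ceases to be a Liouville form and one must rely on the interplay of $f$, $\lambda_X$ and $H'(r)\,dr\wedge d\theta$), it matches the symplectization of $\alpha$ on the glued face, and it induces precisely the blow-down contact structure on the top face. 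Once these normal forms are matched, identification of $\partial_+ W$ with $M_{bd}$ reduces to recognising the top-face germ of $W_{cap}$ as the standard neighbourhood used to define $M_{bd}$ in \cite[Section 5.1]{MNW}.
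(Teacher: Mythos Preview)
The paper does not give its own proof of this statement: it is quoted as a special case of \cite[Theorem 6.1]{MNW}, and the paper only adds the one-line topological description that the cobordism is obtained by attaching $X\times\DD^2$ on top of $X\times\SSS^1\subset\{1\}\times M$ (and smoothing corners), together with the remark that the cobordism is strong but not Liouville. Your sketch recovers exactly this topological picture and then outlines the symplectic construction via the ideal Liouville data $(\lambda_X,f)$ and radial profiles $(g,H)$, which is essentially the approach carried out in the cited reference \cite{MNW}; so your proposal is consistent with, and more detailed than, what the paper itself provides.
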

\noindent Topologically, this cobordism is obtained by attaching $X\times \DD^2$ on top of $X\times \SSS^1$ (and smoothing corners), where the latter is seen as the positive boundary of $[0,1]\times X\times \SSS^1$.
We emphasize that this cobordism is not Liouville, as there are only local Liouville vector fields near the boundary in general.


In the case where we begin with an $\SSS^1$-invariant contact structure and apply this blow down cobordism to Giroux domain $V_{-} \times \SSS^1$, we end up with a convex boundary $M_{bd}$ which is just the contact open book $OB(V_+,\Id)$ with page $V_+$ and trivial monodromy. 
Equivalently this is then the contact boundary of $V_+ \times \DD^2$. 
In other words, we have the following:
\begin{corollary}\label{cor:cap_inv_case}
Let $M = V \times \SSS^1$ be a contact manifold with an $\SSS^1$-invariant contact structure and convex decomposition
\[
V\times \SSS^1 = V_+ \times \SSS^1 \bigcup \overline{V}_- \times \SSS^1.
\]
Then there is a strong symplectic cobordism from $M$ to the contact boundary of $V_+ \times \DD^2$. Furthermore, if $c_1(\xi)$ is torsion and $V_-$ is connected, then the same is true for the first Chern class of the cobordism. 
\end{corollary}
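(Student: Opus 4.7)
The plan is to apply \Cref{thm:cap_MNW} directly to the embedded Giroux domain $G = V_- \times \SSS^1 \subset M$, which is available thanks to the hypothesized $\SSS^1$-invariant convex decomposition. This immediately yields a strong symplectic cobordism $W$ from $M$ to the blow-down $M_{bd}$; by the topological description following \Cref{thm:cap_MNW}, $W = ([0,1]\times M) \cup (V_-\times \DD^2)$, glued along $\{1\}\times V_-\times \SSS^1$.

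Next, one verifies that $M_{bd}$ is indeed the contact boundary of $V_+\times \DD^2$. Removing $\Int(V_-)\times \SSS^1$ from $M$ leaves precisely $V_+\times \SSS^1$, and the blow-down then collapses each circle fiber over the remaining boundary $\partial V_+ = \partial V_-$ to a point. Smoothly this is the standard identification producing the trivial open book $OB(V_+,\Id) = \partial(V_+\times \DD^2)$. On the contact side, the $\SSS^1$-invariant contact form on $V_+ \times \SSS^1$, put into the standard normal form near $\partial V_+$ coming from the ideal Liouville structure of $V_+$, matches the Thurston--Winkelnkemper model for the trivial open book, so the contactomorphism $M_{bd}\simeq \partial(V_+\times \DD^2)$ follows.

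For the Chern class statement, on the piece $[0,1]\times M$ the symplectic tangent bundle of $W$ splits as $\xi \oplus \underline{\CC}$, so $c_1(W)|_M = c_1(\xi)$ is torsion by hypothesis. On the piece $V_-\times \DD^2$, the symplectic form is (up to homotopy of symplectic forms) of split type $\omega_{V_-} + \omega_{\DD^2}$, so under the deformation retraction onto $V_-$ the class $c_1(W)|_{V_-\times \DD^2}$ is identified with $c_1(V_-)$. Pulling the latter back further to $V_-\times \SSS^1\subset M$ gives $c_1(\xi)|_{V_-\times \SSS^1}$, which is torsion; since $\pi^*\co H^2(V_-;\QQ) \to H^2(V_-\times \SSS^1;\QQ)$ is injective by K\"unneth, $c_1(V_-)$ is torsion as well. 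A Mayer--Vietoris computation for $W = ([0,1]\times M) \cup (V_-\times \DD^2)$ then shows that the restriction $H^2(W;\QQ) \to H^2(M;\QQ) \oplus H^2(V_-;\QQ)$ is injective: the map $H^1(M;\QQ) \oplus H^1(V_-;\QQ) \to H^1(V_-\times \SSS^1;\QQ) \cong H^1(V_-;\QQ)\oplus\QQ$ is surjective, because the $\QQ$-summand from $H^1(\SSS^1)$ is hit by the global $\SSS^1$-factor of $M = V\times\SSS^1$ while the $H^1(V_-;\QQ)$-summand is hit by $H^1(V_-)$ itself. Hence $c_1(W)$ is torsion.

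The most delicate point, to my mind, is the contact identification of $M_{bd}$ with $\partial(V_+\times\DD^2)$, which relies on the ideal Liouville domain formalism of \cite{GirouxIdealLiouvDom,MNW} and the compatibility of the $\SSS^1$-invariant contact form with the Thurston--Winkelnkemper model near $\partial V_+$; once that normal form is in place, the rest of the argument is a routine diagram chase.
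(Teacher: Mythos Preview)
Your approach is exactly the one the paper takes: apply \Cref{thm:cap_MNW} to the Giroux domain $V_-\times\SSS^1$ and identify the blown-down boundary with $OB(V_+,\Id)=\partial(V_+\times\DD^2)$. The paper in fact gives no formal proof at all---only the sentence preceding the corollary---so your write-up is strictly more detailed, and in particular your Mayer--Vietoris argument for the $c_1$ claim supplies a justification the paper omits entirely. One small caveat: your surjectivity step in Mayer--Vietoris uses that $H^0(V_-;\QQ)=\QQ$, i.e.\ that $V_-$ is connected; this holds in the Bourgeois setting that the paper actually uses (where $V_\pm\cong\Sigma\times D^*\SSS^1$), but you may wish to note the hypothesis explicitly.
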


The last sentence in the above statement follows from the following observation.
The cobordism $W$ above topologically corresponds to attaching the handle $V_- \times \DD^2$ to $M\times [0,1]$ along $\overline{V}_- \times \SSS^1 \subset M \times \{1\}$.
By the long exact sequence in homology associated to the pair $(W,M)$, we have an exact sequence
$$H_2(M;\QQ)\to H_2(W;\QQ) \to H_2(W,M;\QQ) \to H_1(M;\QQ).$$
Since $H_2(W,M;\QQ)\simeq H_2(V_-\times \DD^2,V_-\times \SSS^1;\QQ)\simeq H_2(\{p\} \times \DD^2,\{p\}\times \SSS^1;\QQ)\simeq H_1(\{p\}\times \SSS^1;\QQ)$ when $V_-$ is connected, which injects into $H_1(M=V\times \SSS^1;\QQ)$ for $p\in V_-$. Therefore, the inclusion of the negative end $M \hookrightarrow W$ induces a surjection on $H_2(\cdot;\QQ)$.
In particular, $c_1(W)\in H^2(W;\QQ)$ is determined by its restriction to the negative boundary of $W$, which just coincides with $c_1(\xi)$. 
Hence, the latter being torsion implies that the former is torsion as well.

\subsection{Allowing surgeries on a Giroux domain} 

We now wish to state a generalization of \Cref{cor:cap_inv_case}, where we allow contact surgeries in one part of the convex decomposition. 

\begin{proposition}\label{prop:cap_gen}
Consider an $\SSS^1$-equivariant contact structure on $M = V^{2n} \times \SSS^1$, with induced convex splitting $V =V_+ \cup \overline{V}_-$.
Suppose now that $(M',\xi')$ is obtained from the former via a sequence of contact surgeries on isotropic (possibly Legendrian) spheres contained in the complement of $V_- \times \SSS^1$. 

Then there is a strong symplectic cobordism $W_{cap}$ from $(M',\xi')$ to the contact manifold $(M_+',\xi_+')$ obtained from $OB(V_+,\Id)$ via the corresponding sequence of contact surgeries (i.e.\ those same contact surgeries performed on $V_+\times \SSS^1$ considered as a subset of $OB(V_+,\Id)$). 

Moreover, if $V_+$ is Weinstein, there is a (smooth) copy $V_+'$ of $V_+$ inside $M_+'$, such that the following holds: 
\begin{itemize}
\item $V_+' \subset M_+'$ can be isotoped to lie in the negative end $M'$ of the cobordism $W_{cap}$; 
\item the image of the map induced by the natural inclusion $ H_q(V'_+)  \to H_q(M'_+)$, has dimension at least $\dim H_q(V'_+) - \#(q\text{-surgeries})$.
\end{itemize}
\end{proposition}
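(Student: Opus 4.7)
The plan is to perform the surgeries first on $M$ to produce $M'$, and then apply the Massot-Niederkr\"uger-Wendl blow-down cobordism of \Cref{thm:cap_MNW} to the Giroux domain $V_- \times \SSS^1 \subset M'$. Since the Legendrian attaching spheres were assumed to lie in the complement of $V_- \times \SSS^1$, this domain is left untouched by the surgeries and remains a Giroux domain inside $M'$, so \Cref{thm:cap_MNW} applies and yields a strong symplectic cobordism $\Wcap$ from $M'$ to the blow-down $(M')_{bd}$. Both operations (the blow-down of $V_- \times \SSS^1$ and the contact surgeries in the complement) are supported in disjoint regions and so commute; inspection of the construction then identifies $(M')_{bd}$ with the contact manifold $M'_+$ obtained by performing the same contact surgeries on the blow-down $OB(V_+,\Id)$ of the original $M$.

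For the Weinstein statement, I realize $V_+$ as a page $V_+ \times \{\theta_0\} \subset OB(V_+,\Id)$ for some fixed $\theta_0 \in \SSS^1$. After a compactly supported contact isotopy we may assume that all the Legendrian attaching spheres lie inside $V_+ \times U$, where $U \subsetneq \SSS^1$ is a proper open subset avoiding $\theta_0$; the page is then disjoint from all the surgery regions, giving a smooth embedding $V_+ \hookrightarrow M'_+$. This embedded copy sits in the $V_+ \times \SSS^1$ part of $(M')_{bd}=M'_+$, a region which the blow-down cobordism $\Wcap$ leaves essentially as a product (as MNW only modifies a neighbourhood of $V_- \times \SSS^1$), so one can isotope the embedded $V_+$ through $\Wcap$ down to its counterpart $V_+ \times \{\theta_0\} \subset V_+ \times \SSS^1 \subset M'$ in the negative end.

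For the homological bound, I would first verify that the inclusion $V_+ \hookrightarrow OB(V_+,\Id)$ is injective on $H_q$. A Mayer-Vietoris computation on the decomposition $OB(V_+,\Id) = (V_+ \times \SSS^1) \cup (\partial V_+ \times \DD^2)$, combined with the K\"unneth splitting $H_*(V_+ \times \SSS^1) \cong H_*(V_+) \oplus H_{*-1}(V_+)$, shows that no nonzero class in $H_q(V_+)$ becomes trivial in $OB(V_+,\Id)$. Passing from $OB(V_+,\Id)$ to $M'_+$ via the contact surgeries, each $q$-surgery (the attachment of a $(q+1)$-handle along a Legendrian $q$-sphere) contributes a new $(q+1)$-chain bounding its attaching sphere, and hence can kill at most one class in the image $H_q(V_+) \to H_q(OB(V_+,\Id))$. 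Summing over the surgeries yields $\dim \ker(H_q(V_+) \to H_q(M'_+)) \le \#(q\text{-surgeries})$, which is the claimed bound. The main point requiring care is the verification that the blow-down cobordism and the disjointly supported contact surgeries genuinely commute at the level of strong symplectic cobordisms, but this should follow from the locality of both constructions together with a direct check that the Liouville vector field near the blow-down region is unaffected by the distant surgeries.
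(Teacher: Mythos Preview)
Your construction of $\Wcap$ matches the paper's: both observe that the blow-down handle $V_-\times\DD^2$ and the surgery handles are attached along disjoint regions of $M\times\{1\}$, so the two operations can be performed independently and one extracts the desired sub-cobordism from $M'$ to $M'_+$.

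The gap is in your construction of the embedded copy $V_+ \subset M'_+$. You claim that a contact isotopy pushes all attaching spheres into $V_+\times U$ for some proper arc $U$, leaving the page $V_+\times\{\theta_0\}$ untouched. But this is not a general-position statement: for a flexible surgery the loose Legendrian sphere has dimension $n$ while the page has codimension~$1$, so their generic intersection is $(n-1)$-dimensional and there is no transversality reason for it to be empty. Nothing in the $h$-principle forces a loose Legendrian off a fixed hypersurface (it may well meet the binding, which has dimension $2n-1$), and after the first surgery the $\SSS^1$-symmetry of $OB(V_+,\Id)$ is destroyed, so no rotation argument is available for subsequent spheres. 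Notice too that your argument never invokes the Weinstein hypothesis on $V_+$; this should already be a warning sign.

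The paper proceeds differently, and this is exactly where Weinstein enters. Instead of a full page one takes the $n$-skeleton $V_+^{(n)}$ of the Weinstein structure; now $n + n < 2n+1$, so the attaching regions can be made disjoint from $V_+^{(n)}$ by honest general position. A regular neighbourhood $N(V_+^{(n)})$ inside the page is then diffeomorphic to $V_+$ itself, since a Weinstein domain retracts onto its skeleton, and this neighbourhood is the required smooth copy of $V_+$ in $M'_+$. As the skeleton lies in the interior of the page, hence away from the attaching region $V_-\times\SSS^1$ of the blow-down handle, it can be flowed down through $\Wcap$ to the negative end $M'$. Your homological bound then applies verbatim to this copy.
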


\begin{proof}
The existence of the cobordism simply follows from the fact that the blow-down cobordism of \Cref{thm:cap_MNW} can be described as an attachment of a handle $V_-\times \DD^2$ on top of $V_-\times\SSS^1\subset M$, and that the contact surgeries are made in the interior of $V_+\times \SSS^1\subset M$, which is \emph{disjoint} from $V_-\times\SSS^1$; see \Cref{fig:blowdown_handle}.
In particular, in the symplectic cobordism given by blowing-down and performing the contact surgeries, one can find a symplectic sub-cobordism from $M'$ to $M_+'$ as claimed in the statement.

For the second part of the statement, we first point out that the dimension of the ambient manifold is $2n+1$, while $V_+$ is homotopy equivalent to its $n$-skeleton, and the dimension of the attaching spheres for the surgeries is at most $n$.
In particular, up to perturbing the attaching spheres of the surgery handles that give $M_+'$ starting from $OB(V_+,\Id)$ \emph{among isotropic/Legendrian submanifolds}, we can assume that they are disjoint from the $n$-skeleton $V_+^{(n)}$ of a page $V_+$ in the open book by general position. 
In particular,  one naturally has a copy of a neighborhood of $V_+^{(n)}$, which is homotopy equivalent to $V_+$, inside $M_+'$; we denote it $V_+'\simeq V_+$.

What is more, as $M_+'$ is topologically obtained from $M'$ by attaching a handle $V_-\times \DD^2$,  and the subset $V_+' \subset M_+'$ is disjoint from it as well as from the surgery region, then $V_+'$ lives in a region where the cobordism $W_{cap}$ is a trivial product and therefore $V_+'$ can be isotoped down to the negative boundary $M'$ of $W_{cap}$. 

The final claim follows from the fact that $M_+'$ is obtained by performing a certain number, say $k$, of contact surgeries on $OB(V_+,\Id)$, which can at most kill $k$ generators in the homology group of the corresponding degree. 
\end{proof}

\begin{figure}[t] 
	\centering
    \def\svgwidth{0.6\textwidth}
    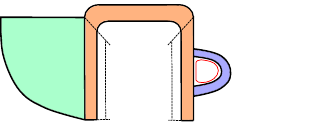
    \caption{The blow-down handle $V_-\times D^2$ in green, attached on top of $\{1\}\times V_-\times \SSS^1 \subset \{1\}\times M$ seen as the boundary of the trivial cobordism $[0,1]\times M$. The purple region depicts the handle attachment that gives $M'$ from $M$, attached on the region $V_+\times \SSS^1\subset M$. In red the contact manifold $M'_+$ resulting from both blow-down and contact surgeries.}
    \label{fig:blowdown_handle}
\end{figure}

\section{An algebraic perspective on tightness}
\label{sec:alg_tight}
Bourgeois and van-Koert \cite{MR2646902} showed that contact homology, which has now been defined rigorously on the homology level by  Pardon \cite{pardon2019contact} and Bao-Honda \cite{MR4539062}, vanishes for overtwisted contact manifolds. Motivated by this, Bourgeois and Niederkr\"uger \cite{bourgeois2010towards} introduced the notion of algebraically overtwisted contact manifolds as follows.

\begin{definition}\label{def:CHA}
    Let $Y$ be a contact manifold and $\CHA(Y;\Lambda)$ the contact homology of $Y$  over the Novikov field $\Lambda$. One says that:
    \begin{enumerate}
        \item $Y$ is \emph{algebraically overtwisted} if $\CHA(Y;\Lambda)=0$ \cite{bourgeois2010towards};
        \item $Y$ is \emph{algebraically tight} if $\CHA(Y;\Lambda)\ne 0$.
    \end{enumerate}
\end{definition}

\begin{proposition}\label{prop:property}
    We have the following properties.
\begin{enumerate}
    \item If $Y$ is overtwisted, then $Y$ is algebraically overtwisted. Hence algebraically tight contact manifolds are tight.
    \item  $Y$ is algebraically overtwisted if and only if one connected component of $Y$ is algebraically overtwisted. 
    \item Let $W$ be a Liouville cobordism with convex boundary $\partial_+W$ and concave boundary $\partial_-W$. If $\partial_+W$ is algebraically overtwisted, then so is $\partial_-W$.
    \item Strongly fillable contact manifolds are algebraically tight.
    \item The following contact manifolds are algebraically tight:
    \begin{enumerate}
        \item hypertight contact manifolds;
        \item contact manifolds with vanishing rational first Chern class and a contact form $\alpha$ such that there is no contractible Reeb orbit of SFT degree $1$;
        \item $1$-ADC contact manifolds.
    \end{enumerate}
\end{enumerate}
\end{proposition}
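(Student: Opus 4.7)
My plan is to address each item in turn, reducing all assertions to formal properties of the SFT chain complex. For (1), I would invoke the Bourgeois--van~Koert theorem \cite{MR2646902}, established rigorously at the homology level by \cite{pardon2019contact, MR4539062}, which gives $\CHA(Y;\Lambda)=0$ whenever $Y$ is overtwisted; the second assertion follows by contraposition. For (2), I would appeal to the K\"unneth-type isomorphism $\CHA(Y_1\sqcup Y_2;\Lambda)\cong \CHA(Y_1;\Lambda)\otimes_\Lambda \CHA(Y_2;\Lambda)$---valid because pseudoholomorphic curves cannot travel between connected components of the target---together with the fact that a unital tensor product of $\Lambda$-algebras vanishes iff one of the factors does.

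For (3) and (4), I would rely on the SFT cobordism map. A Liouville cobordism $W$ from $\partial_-W$ to $\partial_+W$ induces a unital $\Lambda$-algebra morphism
\[
\Phi_W\co \CHA(\partial_+W;\Lambda)\longrightarrow \CHA(\partial_-W;\Lambda),
\]
obtained by counting $W$-holomorphic curves with one positive and arbitrarily many negative punctures. Claim (3) is then immediate from the identity $1=\Phi_W(1)$. For (4), I would specialise to $\partial_-W=\emptyset$, in which case $\Phi_W$ becomes a unital augmentation $\CHA(Y;\Lambda)\to\Lambda$; the equality $\epsilon_W(1)=1\neq 0$ in $\Lambda$ forces $\CHA(Y;\Lambda)\neq 0$. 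Here I would lean on Pardon's VFC package \cite{pardon2019contact} to make these cobordism maps available for general strong, not merely exact, fillings.

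For (5), my strategy is to analyse precisely how $1$ could become null-homologous. The constant term of $\partial\gamma$ on a generator $\gamma$ counts holomorphic planes asymptotic to $\gamma$, which can exist only if $\gamma$ is contractible and of SFT degree~$1$. By the Leibniz rule, only the word-length one part of a chain $c$ can contribute to the constant term of $\partial c$: writing $c=\sum a_\gamma\gamma+(\text{higher word-length terms})$, the constant part of $\partial c$ equals $\sum a_\gamma\, m_0(\gamma)$, where $m_0(\gamma)$ is the plane count from $\gamma$. Hypertightness (5a) kills every $m_0(\gamma)$ directly, while (5b) kills every potentially contributing $\gamma$; in both cases $1\neq 0$ in $\CHA$. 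For (5c), I would apply (5b) on a finite action window and bootstrap via a standard continuation/action-filtration argument: any putative chain $c$ with $\partial c=1$ has bounded action, so for $i$ large the $1$-ADC form $\alpha_i$ fulfils the hypotheses of (5b) below that threshold, and the nonvanishing of $1$ propagates to the full $\CHA(Y;\Lambda)$.

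The main obstacle I foresee is technical rather than conceptual: ensuring that the cobordism maps, augmentations, and continuation maps enjoy the required unital and compatibility properties within the chosen rigorous framework (Pardon or Bao--Honda), especially for the strong-filling case in (4) and for the limiting argument in (5c). Once those foundational inputs are granted, each step reduces to the essentially formal algebraic manipulations sketched above.
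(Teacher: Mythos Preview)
Your proposal is correct and follows essentially the same route as the paper's proof: K\"unneth for (2), the unital cobordism map and augmentation for (3)--(4), and degree/homotopy-class constraints on holomorphic planes for (5a)--(5b), with (5c) handled by an action-filtration/continuation argument. The only cosmetic differences are that the paper additionally cites \cite{CMP} in (1) to bridge the high-dimensional overtwistedness of \cite{BEM} to the input used in \cite{MR2646902}, and phrases (5c) explicitly via the functoriality map $\CHA^{<A}(Y,\alpha)\to\CHA^{<A}(Y,\alpha_0)$ for a suitably chosen $\alpha_0<\alpha$ from the ADC sequence.
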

\begin{proof}
    The first claim follows from \cite{MR2646902} and the main result of \cite{CMP}. 
    Although \cite{MR2646902} uses $\Q$-coefficients, the claim is also true with $\Lambda$-coefficients (i.e.\ vanishing of contact homology is independent of these choices), as the power of the Novikov variable in the Novikov field is determined by the periods of Reeb orbits (see e.g.\  \cite[Proof of Corollary 3.15]{RSFT}).
    The second claim follows from the fact that $\CHA(Y_1\sqcup Y_2)=\CHA(Y_1)\otimes \CHA(Y_2)$. 
    The third claim follows from the functoriality of contact homology, i.e.\ we have an unital algebra map $\CHA(\partial_+W) \to \CHA(\partial_-W)$.
    The fourth claim follows from the fact that $\CHA(Y;\Lambda)=0$ is an obstruction to the existence of augmentations, hence the existence of strong fillings, see e.g.\ \cite[Corollary 3.15]{RSFT}. 
    Conditions in (a) and (b) of the fifth claim imply that $1\ne 0 \in \CHA(Y)$ by homotopy class or degree reasons, hence $\CHA(Y)\ne 0$.
    Roughly speaking, (c) is a special case of (b).
    (Recall that $k-$ADC contact structures have torsion $c_1$ by definition.)
    However, to cope with the asymptotic property in the definition of $1$-ADC, we can argue as follows. 
    If $\CHA(Y)=0$, then for any fixed contact form $\alpha$, there exists a positive number $A$, such that $1=0\in \CHA^{<A}(Y,\alpha)$, i.e.\ the homology of the sub-complex generated by $\alpha$-orbits with contact action at most $A$.
    By the $1$-ADC condition, we can find a contact form $\alpha_0<\alpha$, such that all $\alpha_0$-Reeb orbits with action smaller than $A$ have SFT degree at least $2$. 
    Then the functoriality $\CHA^{<A}(Y,\alpha)\to \CHA^{<A}(Y,\alpha_0)$ implies that $1=0\in  \CHA^{<A}(Y,\alpha_0)$, which is impossible due to degree reasons.
\end{proof}

\begin{example}[$1$-ADC Bourgeois manifolds]\label{ex:Bourgeois}
Starting with an open book whose binding is $(-1)$-ADC and simply connected, according to \Cref{lem:kADC_stable}, the associated Bourgeois contact structure is $1$-ADC, hence algebraically tight. As special cases, Bourgeois manifolds in \Cref{ex:Brieskorn_ADC} are algebraically tight. Then, after contact surgeries, the contact manifold remains algebraically tight, hence tight. 
\end{example}

\begin{remark}[Algebraic vs.\ geometric]
One of the most fundamental questions in symplectic and contact topology is whether the boundary between rigidity and flexibility is captured by pseudo-holomorphic curves. 
In this framework, it was shown by Avdek \cite{avdek2020combinatorial} that algebraic overtwistedness in dimension $3$ does not imply overtwistedness.
Avdek used another tightness criterion from Heegaard-Floer homology, which is still based on pseudo-holomorphic curves. 
Unlike the situation in dimension $3$, in higher dimensions, it seems that algebraic tightness is the only currently available criterion to ensure tightness.  
\end{remark}

In dimension $5$, we can reinterpret the proof of \cite[Theorem A]{BGM} to get the following.
\begin{theorem}\label{thm:alg_tight_Bour}
    $5$-dimensional Bourgeois contact structures $BO(\Sigma,\psi)$ are algebraically tight if $\Sigma$ is \emph{non-sporadic} (i.e.\ not a sphere with $3$ or less punctures).
\end{theorem}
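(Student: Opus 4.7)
The plan is to reinterpret the pseudo-holomorphic curve analysis of \cite[Theorem A]{BGM}, which originally obstructs strong symplectic fillability of $Y = BO(\Sigma,\psi)$ for non-sporadic pages $\Sigma$, into a non-vanishing statement at the level of contact homology. In contrast with \Cref{ex:Bourgeois}, we cannot appeal to $1$-ADC in dimension $5$: the binding is $3$-dimensional and typically non-simply connected, so \Cref{lem:kADC_stable} does not apply, forcing us to argue curve-by-curve rather than via index considerations alone.

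First, I would set up the geometric framework exactly as in \cite[Theorem A]{BGM}: starting from the $\SSS^1$-invariant structure on $Y$ coming from one factor of $\TT^2$, I use the convex splitting discussed in \Cref{sec:S1_invariant} together with the blow-down cobordism of \Cref{cor:cap_inv_case}, producing a strong symplectic cobordism $W$ from $Y$ to the convex boundary $Y_+ = OB(V_+,\Id)$ with $V_+ = \Sigma\times D^*\SSS^1$ an ideal Liouville domain. The non-sporadicity of $\Sigma$ is exactly the hypothesis used in \cite{BGM} to produce, in this capped setting, an index-$0$ moduli space of punctured pseudo-holomorphic spheres with controlled behaviour at infinity and a nonzero signed count.

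Second, I would translate this moduli space count into an algebraic statement about $\CHA(Y;\Lambda)$. Pick a Reeb orbit $\gamma$ of the Bourgeois contact form that appears as the unique negative asymptote of the rigid spheres in the cap; using the explicit description of the Reeb dynamics in \Cref{lemma:Reeb_vf}, together with SFT-degree and action bounds in dimension $5$, one checks that $\gamma$ is a cycle, i.e.\ $\partial \gamma = 0$. The BGM count, set up in Pardon's VFC framework \cite{pardon2019contact} which is available for general strong symplectic cobordisms, yields a chain-level equation of the form $\Phi_{W}(\eta) = \gamma + \text{(bounded terms)}$ for a specific cycle $\eta$ in the symplectic cohomology (or positive SFT) of a filling of $Y_+$ attached to $W$. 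A neck-stretching argument along $Y$ then shows that $[\gamma]$ cannot be a boundary in $\CHA(Y;\Lambda)$: any bounding chain would, upon compactification in the capped cobordism, force additional rigid curves incompatible with the BGM count.

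The main obstacle I expect is this third step: rewriting the BGM obstruction-to-fillability, which takes as hypothesis the existence of a hypothetical filling and derives a contradiction from SFT compactness, as a statement internal to the contact homology of $Y$ that does not presuppose any filling. Concretely, one must carefully check that the relevant cobordism maps and neck-stretching compactifications are well-defined in Pardon's setup for strong (not merely Liouville) cobordisms, and that transversality can be achieved so that the BGM count of rigid spheres coincides with the corresponding coefficient in the chain-level relation above. Once this is done, $[\gamma]\neq 0 \in \CHA(Y;\Lambda)$, hence $Y$ is algebraically tight.
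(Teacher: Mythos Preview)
Your approach diverges substantially from the paper's, and the divergence introduces a genuine gap.

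The paper's proof does not use the blow-down cobordism of \Cref{cor:cap_inv_case} at all. Instead it invokes a \emph{different} cobordism from \cite[Theorem 3.1 and Corollary 4.1]{BGM}: one whose \emph{positive} (convex) end is $BO(\Sigma,\psi)$ and whose \emph{negative} (concave) end consists of \emph{hypertight} contact manifolds. Non-sporadicity of $\Sigma$ is precisely what allows \cite{BGM} to build this cobordism (roughly, via hyperbolic geometry on the page). The argument is then almost formal: hypertight implies algebraically tight by \Cref{prop:property}(5a), and functoriality of contact homology as a unital algebra map $\CHA(\partial_+W)\to\CHA(\partial_-W)$ transfers algebraic tightness upward. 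The only subtlety is that the cobordism is merely pseudo-Liouville (exact symplectic form, but the primitive is not adapted at the concave end), which a priori deforms the cobordism map by holomorphic caps; \cite[Lemma 5.1]{BGM} rules those caps out, so ordinary functoriality survives.

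Your proposal instead places $Y=BO(\Sigma,\psi)$ at the \emph{concave} end of the blow-down cobordism, which is the cobordism relevant to the \emph{non-fillability} argument (\cite[Theorem B]{BGM}), not the tightness one. Functoriality in this direction cannot by itself push algebraic tightness down to $Y$. You try to compensate by extracting from the BGM moduli count a specific Reeb orbit $\gamma$ on $Y$ and arguing $[\gamma]\ne 0$ in $\CHA(Y;\Lambda)$, but this step does not match what \cite{BGM} actually produces: the rigid spheres there live in the \emph{closed} manifold obtained after gluing a hypothetical filling below, not as planes in the bare cobordism with a single negative asymptote on $Y$. There is no canonical ``$\gamma$'' sitting on $Y$ coming out of that count, and your neck-stretching sketch presupposes exactly the filling whose existence the BGM argument was designed to contradict. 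In short, you have imported the non-fillability machinery into a setting where it does not yield the internal contact-homology statement you need; the paper's route via the hypertight cobordism avoids this entirely.
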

\begin{proof}
    The hypothesis of non-sporadic page gives that there is a strong symplectic cobordism whose negative end consists of hypertight contact manifolds \cite[Theorem 3.1 and Corollary 4.1]{BGM}.
    Now the symplectic form on this cobordism is exact (hence no sphere bubbling can occur), however the given global primitive $1$-form is \emph{not} compatible with the contact structure on the negative end (this cobordism is called \emph{pseudo}-Liouville in \cite{BGM}). 
    In terms of functoriality in SFT, a priori this leads to deformations of the algebraic structure on the negative boundary by counting holomorphic caps in the cobordism, see \cite{MZ}.  However, holomorphic caps for this specific cobordism can be ruled out as shown in the proof of \cite[Lemma 5.1]{BGM}. Hence the usual functoriality for contact homology holds (over Novikov coefficients), and the positive boundary is also algebraically tight, as hypertight contact manifolds are algebraically tight.
\end{proof}
By point $(3)$ of \Cref{prop:property}, we then have the following useful consequence:
\begin{corollary}\label{cor:alg_tight_Bour}
   Any contact manifold obtained via contact surgery on a $5$-dimensional Bourgeois contact structure $BO(\Sigma,\psi)$ for non-sporadic $\Sigma$ is algebraically tight, and hence in particular tight.
\end{corollary}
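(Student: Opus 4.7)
The plan is to combine \Cref{thm:alg_tight_Bour} with point (3) of \Cref{prop:property}, by realizing the surgery process as a Liouville cobordism. The starting observation is that \Cref{thm:alg_tight_Bour} already gives algebraic tightness for $BO(\Sigma,\psi)$ itself whenever $\Sigma$ is non-sporadic, so the issue is only to transfer this information to the surgered manifold $(M',\xi')$.

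First I would recall that a contact surgery on a contact manifold $(Y,\xi)$ — understood here as the attachment of a Weinstein handle along an isotropic or Legendrian sphere, as in the contact surgeries considered throughout \Cref{sec:Bourgeois} — produces a Liouville (in fact Weinstein) cobordism $W$ whose concave boundary is $(Y,\xi)$ and whose convex boundary is the resulting manifold $(Y',\xi')$. Applied to $Y = BO(\Sigma,\psi)$ (or iteratively to a sequence of such surgeries), this yields a Liouville cobordism $W$ with $\partial_-W = BO(\Sigma,\psi)$ and $\partial_+W = (M',\xi')$.

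Now I would argue by contradiction. If $(M',\xi')$ were algebraically overtwisted, then point (3) of \Cref{prop:property} applied to $W$ would force $\partial_-W = BO(\Sigma,\psi)$ to be algebraically overtwisted as well. This contradicts \Cref{thm:alg_tight_Bour}, so $(M',\xi')$ is algebraically tight. The final statement, that algebraic tightness implies tightness, is precisely point (1) of \Cref{prop:property}, coming from the vanishing of contact homology for overtwisted manifolds proven in \cite{MR2646902,CMP}.

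There is essentially no obstacle here beyond verifying that ``contact surgery'' in the statement is indeed realized by Weinstein handle attachments and hence by a Liouville cobordism; this is automatic for the subcritical and flexible Legendrian surgeries used elsewhere in the paper. All the geometric input (ruling out symplectic caps, page non-sporadic, hypertightness of the negative end of the MNW-style cobordism) has already been absorbed into the statement of \Cref{thm:alg_tight_Bour}, so the corollary is a short formal consequence of functoriality of contact homology under Liouville cobordisms.
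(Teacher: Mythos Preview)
Your proof is correct and follows exactly the same approach as the paper: the corollary is stated immediately after \Cref{thm:alg_tight_Bour} with the remark that it follows from point (3) of \Cref{prop:property}, i.e.\ functoriality of contact homology through the Liouville cobordism given by the Weinstein handle attachments. You have simply spelled out in detail what the paper leaves as a one-line deduction.
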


\section{Homological restrictions on symplectic fillings}

We begin by giving some context to our main technical result that puts restrictions on the homology of strong symplectic fillings of certain contact manifolds. For the moment we will avoid technicalities and refer to the following sections for more details.

\subsection*{Uniqueness results} Eliashberg--Floer--McDuff theorem \cite{McD91} showed that any Liouville filling of the standard sphere of dimension at least $5$ is diffeomorphic to a ball. In order to achieve this, one considers certain moduli spaces of holomorphic curves that serve as tools to determine the topology of the filling. In particular, one shows that the {\em homology} of the filling is fixed by that on the boundary. Uniqueness results of this kind have since been obtained for more general classes of manifolds and apply to fillings of the contact boundary of a split manifold of the form $V\times \D^2$, or to flexible Weinstein domains; cf.\ \cite{OV,BGZ,Zho21,product,BGM,GKZ}. 

If one considers the more general class of strong fillings, these are no longer unique, as one can take a blow-up of any given filling without affecting the boundary. In particular, one expects a lower bound on the {\em homological complexity} of a strong filling coming from a standard Liouville filling.

\subsection*{Symplectic cohomology and fillings}  In contrast to arguments based on the Eliashberg-Floer-McDuff result, there is another approach that uses symplectic cohomology. It is well known that the symplectic cohomology of any subcritical Weinstein domain vanishes \cite{Cie02b}, and this was extended to flexible Weinstein domains with the advent of Murphy's notion of loose Legendrians \cite{BourEkEl,zbMATH06864340}. In particular, for a flexible filling $W_0$, the tautological sequence yields
 $$\ldots  \to  SH^{*}(W_0)=0 \to SH_+^{*}(W_0) \stackrel{\cong}{\to} H^*(W_0)[1] \to SH^{*+1}(W_0)=0 \to \ldots$$
so that the positive symplectic cohomology is isomorphic to the ordinary cohomology of the filling with a grading shift.

\subsection*{Dynamics gives contact invariants and homological constraints} {\em A priori}, positive symplectic cohomology might depend on the choice of the Liouville filling. Bourgeois and Oancea \cite{MR2471597} showed that there is a way to understand the role of fillings in positive symplectic cohomology in terms of augmentations/counting rigid holomorphic planes from the filling. As a result, Cieliebak and Oancea \cite{CieOan18} utilised a neck-stretching argument to show that positive symplectic cohomology does not depend on the choice of filling $W$ (with mild topological assumptions) given the contact boundary is index positive as in \Cref{def:index-pos}. The same result was generalized to the boundary of flexible Weinstein domains with vanishing first Chern class by Lazarev \cite{Laz20}, as the contact boundary is ADC (cf.\ \Cref{def:kADC}). More precisely, under the index-positive/ADC assumption, the Floer differential in positive symplectic cohomology for suitable choices of contact forms and almost complex structures ``lives'' entirely in the end of the filling. In this way, one gets that the homology of a filling of a flexibly fillable contact manifold (of vanishing first Chern class) is an invariant.

In fact, much more is true: one can consider the restriction of the connecting homomorphism to the boundary, which also does not depend on the filling for ADC manifolds. 
$$ SH_+^{*}(W) \stackrel{\delta}{\to} H^*(W)[1]  \to H^*(\partial W)[1].$$
This then gives an important perspective on this invariant, namely we can understand the image of $H^*(W)\to H^*(\partial W)$ of any Liouville filling via the invariant map $\delta_{\partial}:SH_*^+(W)\to H^*(\partial W)[1]$ in a standard filling. This viewpoint was used to deduce topological information of Liouville fillings in \cite{Zho21}. Moreover, in the split case $W=V\times \DD^2$, similar results hold without any assumptions on the first Chern class \cite{product}.
\subsection*{General strong fillings} 
In order to extend these ideas to strong fillings and to provide filling obstructions to obtain \Cref{thm:tightnonfill_general}, one needs to extend the above discussion to this more general class of fillings and localize the properties under connected sums. The former concern requires a virtual setup: here we use Pardon's construction of Hamiltonian-Floer cohomology for a general symplectic manifold via his VFC theory \cite{pardon}, as symplectic cohomology is just a special case of Hamiltonian-Floer cohomology. The latter concern requires understanding the behavior of contact dynamics and holomorphic curves under connected sums. On the other hand, to infer information on the topology of the filling, it is sufficient to use the tautological exact sequence and the restriction on the connecting homomorphism for filtered symplectic cohomology (which involves Reeb orbits up to an action threshold), despite the fact that this is not an invariant of the contact manifold (as it depends on the contact form):
$$\ldots \to H^*(W) \to  SH^{*,<D}(W) \to SH_+^{*,<D}(W) \stackrel{\delta}{\to} H^*(W)[1] \to \ldots$$
In particular,  we make essential use of specific models for the Reeb dynamics, and one can think of this as arguing with cohomology on the chain level rather than with the invariant directly. In particular, our arguments involve careful computations of various boundary maps {\em on the nose}.

\subsection{Main result}\label{sec:obstr_fillings}
The main tool that we will use to obstruct strong symplectic fillings is given by the following:
\begin{theorem}[Homological Constraints on Fillings]\label{thm:unique}
\ 
Let $W_0$ be a $(2n+2)$-dimensional flexible Weinstein domain with $n\geq 2,$ and let $(M,\xi)$ be any contact manifold.
    \begin{enumerate}
        \item[(a)]  Suppose that $c_1(W_0)$ and $c_1(\xi)$ both vanish. If $W$ is a strong filling of $Y=M\# \partial W_0$, then the kernel of $H_n(\partial W_0;\Q)\to H_n(W_0;\Q)$ contains the kernel of $$H_n(\partial W_0;\Q) \stackrel{0\oplus \Id }{\hookrightarrow}  H_n(M;\Q) \oplus H_n(\partial W_0;\Q)=H_n(M\# \partial W_0;\Q)\rightarrow H_n(W;\Q).$$

   \item[(b)] Let $W_0$ be subcritical (\textbf{without} any assumption on $c_1=0$).
   Then, for any strong filling $W$ of $Y$ we have that $H_n(\partial W_0;\Q)\to H_n(W;\Q)$ is injective.
    \end{enumerate}
\end{theorem}

Note that, in the case that $W_0$ is subcritical, the first claim reduces to the fact that $H_n(\partial W_0;\Q)\to H_n(W;\Q)$ is injective.
Indeed, the first map $H_n(\partial W_0;\Q)\to H_n(W_0;\Q)$ is automatically injective in this case:
this is a consequence of the long exact sequence of the homology of the pair, as $H_{n+1}(W_0,\partial W_0 ; \Q)$ is generated by the cocores of the Weinstein handles of index $n+1$, and hence there are none if $W_0$ is subcritical.
The non-trivial content of statement (b) is that subcriticality implies that $H_n(\partial W_0;\Q)\to H_n(W;\Q)$ is injective \emph{without the assumption that $c_1(\xi)$ and $c_1(W_0)$ vanish}.

\begin{remark}
	\Cref{thm:unique} is stated in this way only for the purpose of constructing exotic contact structures on spheres in this paper, but in fact, a stronger version of this result, giving homological information on the fillings in some other degrees for $W_0$ flexible, can also be obtained by similar arguments. 
    We also point out that, when $M=\emptyset$ and $W_0$ is subcritical, one can use a strategy of filling by pseudo-holomorphic curves similar to \cite{BGZ} to obtain a similar result, which was used by the first three authors \cite[Theorem B]{BGM} to obstruct strong fillings.
\end{remark}
In the proof of \Cref{thm:unique}, it is important to rule out the effects from Reeb orbits on $M$ as well as the boundary of the Weinstein $1$-handle in the connected sum. The former can be easily dealt with if we scale up the contact form on $M$, and those Reeb orbits will have large periods and hence not be seen, as we will use a fixed period threshold for filtered symplectic cohomology on $\partial W_0$. It is the Reeb orbits on the handle that divides \Cref{thm:unique} into two cases. In the general flexible case, the Reeb orbits created on the handle will have sufficiently high Conley-Zehnder indices, see e.g.\ \Cref{lemma:connected_sum}. Those orbits do not affect the piece of symplectic cohomology we care about. This uses the $\Z$-grading in an essential way. In the subcritical case, we make use of the splitting form and a special choice of almost complex structure so that we can rule out the interaction with those orbits on the handle by an energy argument, which is inspired by similar considerations in the work of Chiang-Ding-van Koert \cite{CDvK16}.

\subsection{Symplectic Cohomology}
\label{sec:basics_sympl_cohomol}
As outlined above, the main tool to prove \Cref{thm:unique} will be symplectic cohomology. Moreover, we will need a precise description of certain moduli spaces. We then start by recalling the basics of symplectic cohomology (we refer readers to \cite{CieOan18} for a more detailed account). 
\subsection*{Basic Properties}
Let $W$ be a Liouville filling with \emph{strict} contact boundary $(Y,\alpha)$ (i.e.\ we fix the contact form $\alpha$ such that the Liouville form of $W$ is restricted to be $\alpha$ on the boundary). 
\begin{enumerate}
    \item (Filtered Symplectic Cohomology) For any $D>0$, which is not the period of some Reeb orbit of $\alpha$, we have a $\Z/2-$graded filtered positive symplectic cohomology $SH_+^{*,<D}(W)$. We may assume Reeb orbits of period at most $D$ are non-degenerate. 
    Its underlying cochain complex is generated by the hat $\hat{\gamma}$ and check $\check{\gamma}$ versions of the Reeb orbits with period up to $D$, whose grading is given by $\frac{\dim W}{2}-\mu_{CZ} \mod 2$; here, $\mu_{CZ}(\check{\gamma})=\mu_{CZ}(\hat{\gamma})-1=\mu_{CZ}(\gamma)$ with $\mu_{CZ}(\gamma)$ being the Conley-Zehnder index of the Reeb orbit $\gamma$. The direct limit w.r.t.\ $D\to +\infty$ yields the symplectic cohomology of $W$, which is an invariant of $W$ up to Liouville homotopy. On the other hand, the filtered positive symplectic cohomology $SH_+^{*,<D}(W)$ depends on the boundary contact form $\alpha$ and is not an invariant of $W$ up to homotopy. Note that we suppress the appearance of $\alpha$ into the assumption that $W$ is a Liouville filling with \emph{strict} contact boundary $(Y,\alpha)$ for simplicity. We also have the filtered symplectic cohomology $SH^{*,<D}(W)$, whose cochain complex is generated by the above orbits, and a Morse cochain complex of $W$. Moreover, these cohomologies fit into a tautological long exact sequence:
    $$\ldots \to H^*(W) \to  SH^{*,<D}(W) \to SH_+^{*,<D}(W) \stackrel{\delta}{\to} H^*(W)[1] \to \ldots.$$
    \item (Boundary Map) We define $\delta_{\partial}: SH_+^{*,<D}(W) \to H^*(Y)[1]$ to be the composition of the connecting map $\delta$ and the restriction map in cohomology associated to the inclusion $Y\hookrightarrow W$. 
    Then, according to \cite[\S 3.1]{Zho21}, $\delta_{\partial}$ can be defined directly by counting rigid configurations consisting of a holomorphic plane in $W$ and a tail of gradient flow line in $Y$, with respect to any auxiliary Morse function on $Y$. 
    When $\delta_{\partial}(x)$ has a non-trivial component in $H^d(Y)$, then we have
    $$d-\frac{1}{2}\dim W + \mu_{CZ}(x)-1=0,$$
    where $\mu_{CZ}(x)$ is the Conley-Zehnder index computed with respect to the trivialization induced by the holomorphic plane which contributes to said non-trivial component in $H^d(Y)$.
    
    \item\label{item:VFC}(Novikov Coefficients) Symplectic cohomology for Liouville fillings is classically defined for Liouville fillings over $\Z$, and more generally for semi-positive symplectic fillings with coefficients in the Novikov ring over $\Z$. 
    Appealing to virtual perturbation techniques (e.g.\ polyfolds, virtual fundamental cycles, Kuranishi techniques) one can furthermore define symplectic cohomology in the case of general symplectic fillings; the drawback is that it is now defined over the Novikov \emph{field} $\Lambda$ over $\Q$, where 
    $$\Lambda=\left\{\sum_{i=1}^{\infty} a_iq^{\lambda_i}\left| a_i\in \Q,\lambda_i\to +\infty \right. \right\}.$$
    This can be done also for positive symplectic cohomology, leveraging the asymptotic behaviour lemma \cite[Lemma 2.3]{CieOan18}. 

    For the purposes of this paper, it is enough to use the construction of Hamiltonian-Floer cohomology for general symplectic manifolds in \cite{pardon}: more precisely, 
    the results in this paper rely only on the fact that a virtual count/perturbed count can be arranged to be the geometric count when transversality holds, e.g.\ by \cite[Proposition 4.33]{pardon2019contact} in the case of Pardon's VFC. 
    One then also has an analogous tautological exact sequence (with Novikov coefficients):
      $$\ldots \to H^*(W,\Lambda) \to  SH^{*,<D}(W,\Lambda) \to SH_+^{*,<D}(W,\Lambda) \stackrel{\delta}{\to} H^*(W,\Lambda)[1] \to \ldots.$$
      The Novikov coefficient cohomology of $W$ is simply the tensor product $H^*(W)\otimes_{\Q}\Lambda$.
\end{enumerate}

\subsection{Neck-stretching}
We will repeatedly apply neck-stretching \cite{BEHWZ03} to moduli spaces in symplectic cohomology. 
    Roughly speaking, we will use this to reduce the computation of boundary/connecting maps to counting certain curves in the symplectization (with an appropriate choice of Hamiltonian) and hence in a standard model which is ``sufficiently stretched". Here we present a very brief account of how to apply neck-stretching to moduli spaces in symplectic cohomology, and refer readers to \cite[Lemma 2.4, Proposition 9.17 ]{CieOan18} for further details (see also \cite[\S 3.2]{Zho21} or \cite[\S 2.5]{product}).  

\medskip
    
    Let $(Y,\alpha)$ be a contact-type hypersurface in a symplectic filling $W$. 
    Assume that $Y$ divides $W$ into the union of a cobordism $X$ from $Y$ to $\partial W$ and a filling $W'$ of $Y$. 
    
    For any almost complex structure $J$, which is compatible with the contact structure near $Y$, we can find a $[0,1)-$family of compatible almost complex structures on $\widehat{W}$ starting at $J_0=J$ and converging, for $t\to1$, to the almost complex structures on the completions $\widehat{X}$, $\widehat{W'}$ and $Y\times \RR_+$ obtained by stretching $J$ in the Liouville vector direction along $Y$.  

\begin{figure}[t]
	\begin{center}
		\begin{tikzpicture}[scale=0.5]
		\path [fill=blue!15] (0,0) to [out=20, in=160]  (6,0) to [out=270,in=90] (6,-6) to [out=170,in=10] (0,-6) to [out=90, in=270] (0,0);
		\path [fill=red!15] (0,-6) to [out=10, in=170]  (6,-6) to [out=270,in=0] (3,-10) to [out=180,in=270] (0,-6); 
		\draw (0,0) to [out=20, in=160]  (6,0) to [out=270,in=90] (6,-6) to [out=170,in=10] (0,-6) to [out=90, in=270] (0,0);
		\draw (6,-6) to [out=270,in=0] (3,-10) to [out=180,in=270] (0,-6); 
		\draw[dashed] (0, -5.5) to [out=10, in=170] (6,-5.5);
		\draw[dashed] (0.02,-6.5) to [out=10, in=170] (5.98,-6.5);
		\draw (2,-1) to [out=90, in=180] (2.5, -0.75) to [out=0, in = 90] (3,-1) to [out=270,in=0] (2.5,-1.25) to [out=180,in=270] (2,-1);
		\draw (2,-1) to [out=270,in=90] (1,-7) to [out=270,in=180] (2,-8) to [out=0,in=180](2.5,-3) to [out=0, in=90](3,-4);
		\draw (3,-1) to [out=270, in=90] (4,-4) to [out=270, in=0] (3.5,-4.25) to [out=180,in=270] (3,-4);
		\draw[dotted] (3,-4) to [out=90, in=180] (3.5,-3.75) to [out=0, in=90] (4,-4);
		
		\node at (2.5,-2) {$u$};
		\end{tikzpicture}
		\hspace{1cm}
		\begin{tikzpicture}[xscale=0.5,yscale=0.7]
		\path [fill=blue!15] (0,-1) to [out=20, in=160]  (6,-1) to [out=270,in=90] (6,-6) to [out=170,in=10] (0,-6) to [out=90, in=270] (0,-1);
		\path [fill=red!15] (0,-6) to [out=10, in=170]  (6,-6) to [out=270,in=0] (3,-10) to [out=180,in=270] (0,-6); 
		\draw (0,-1) to [out=20, in=160]  (6,-1) to [out=270,in=90] (6,-6) to [out=170,in=10] (0,-6) to [out=90, in=270] (0,-1);
		\draw (6,-6) to [out=270,in=0] (3,-10) to [out=180,in=270] (0,-6);
		\draw[dashed] (0, -5.5) to [out=10, in=170] (6,-5.5);
		\draw[dashed] (0.1, -6.5) to [out=10, in=170] (5.9,-6.5);
		\draw (2,-2) to [out=90, in=180] (2.5, -1.8) to [out=0, in = 90] (3,-2) to [out=270,in=0] (2.5,-2.2) to [out=180,in=270] (2,-2);
		\draw (2,-2) to [out=270,in=90] (1,-7) to [out=270,in=180] (2,-8) to [out=0,in=180](2.5,-3.5) to [out=0, in=90](3,-4);
		\draw (3,-2) to [out=270, in=90] (4,-4) to [out=270, in=0] (3.5,-4.2) to [out=180,in=270] (3,-4);
		\draw[dotted] (3,-4) to [out=90, in=180] (3.5,-3.8) to [out=0, in=90] (4,-4);

		\draw (5.75,-5) to (5.8,-5);
		\node at (2.5,-3) {$u$};
		\end{tikzpicture}
		\hspace{1cm}
		\begin{tikzpicture}[scale=0.5]
		\path [fill=blue!15] (0.5,-6) to [out=90, in=270]  (0,0) to [out=20,in=160] (6,0) to [out=270,in=90] (5.5,-6) to [out=160, in=20] (0.5,-6);
		\path [fill=purple!15] (0.5,-6.2) to [out=20,in=160] (5.5,-6.2) to [out=270,in=90] (5.5,-12) to [out=160, in=20] (0.5,-12) to [out=90, in=270] (0.5,-6.2);
		\path [fill=red!15] (0.5, -12.2) to [out=20,in=160] (5.5,-12.2) to [out=270,in=0] (3,-16) to [out=180,in=270] (0.5,-12.2);
		\draw (0.5,-6) to [out=90, in=270]  (0,0) to [out=20,in=160] (6,0) to [out=270,in=90] (5.5,-6);
		\draw [dashed] (5.5,-6) to [out=160, in=20] (0.5,-6);
		\draw[dashed] (0.5,-6.2) to [out=20,in=160] (5.5,-6.2);
		\draw (5.5,-6.2) to [out=270,in=90] (5.5,-12);
		\draw[dashed] (5.5,-12) to [out=160, in=20] (0.5,-12);
		\draw (0.5,-12) to [out=90, in=270] (0.5,-6.2);
		\draw [dashed](0.5, -12.2) to [out=20,in=160] (5.5,-12.2); 
		\draw (5.5,-12.2)to [out=270,in=0] (3,-16) to [out=180,in=270] (0.5,-12.2);
		\draw (2,-1) to [out=90, in=180] (2.5, -0.75) to [out=0, in = 90] (3,-1) to [out=270,in=0] (2.5,-1.25) to [out=180,in=270] (2,-1);
		\draw (2,-1) to [out=270,in=90] (1,-4) to [out=270, in=180]  (1.5,-5.8) to [out=0,in=270](2,-5) to [out=90,in=180] (2.5,-3) to [out=0, in=90](3,-4);
		\draw (3,-1) to [out=270, in=90] (4,-4) to [out=270, in=0] (3.5,-4.25) to [out=180,in=270] (3,-4);
		\draw[dotted] (3,-4) to [out=90, in=180] (3.5,-3.75) to [out=0, in=90] (4,-4);

		\draw (1.5,-5.9) to [out=180,in=90] (0.8,-9) to [out=270, in=180] (1.5,-11.8) to [out=0,in=270] (2.2,-9) to [out=90, in=0] (1.5,-5.9);
		\draw (1.5,-11.9) to [out=180,in=90] (0.8,-13) to [out=270, in=180] (1.5,-14) to [out=0,in=270] (2.2,-13) to [out=90, in=0] (1.5,-11.9);

		\node at (1.5,-5.8) [circle, fill=white, draw, outer sep=0pt, inner sep=3 pt] {};

		\node at (1.5,-11.8) [circle, fill=white, draw, outer sep=0pt, inner sep=3 pt] {};

		\node at (2.5,-2) {$u^{\infty}$};
		\node at (4.5,-2) {$\widehat{X}$};
		\node at (4,-10) {$\widehat{Y}$};
		\node at (4,-14) {$\widehat{W'}$};
		\end{tikzpicture}
	\end{center}
    \caption{Neck-stretching. We use $\bigcirc$ to indicate the puncture that is asymptotic to a Reeb orbit. The cylindrical ends are asymptotic to Reeb orbits, and $u^{\infty}$ is the top-level curve in the fully stretched picture.}
    \label{fig:neck}
\end{figure}
    
    Holomorphic curves and/or Floer cylinders for this family of almost complex structures $J_t$, will then converge to SFT buildings for $t\to1$, in such a way that the top-level curve, i.e.\ the curve contained in $\widehat{X}$ will develop negative punctures asymptotic to Reeb orbits on $Y$, see \Cref{fig:neck}.  
    Denoting by $\Gamma$ the set of Reeb orbits to which these negative punctures are asymptotic, 
    we then have the following two constraints on the top-level curve:
   
    \begin{enumerate}
        \item\label{item:action_constraint} \textbf{Action constraint}: {\it action increases along the top-level curve, i.e.\
        \begin{equation}
        \label{eqn:action_constraint}
        \mathcal{A}_H(\text{output end})-\mathcal{A}_H(\text{input end})-\sum_{\gamma\in \Gamma }\int \gamma^*\alpha \; \ge 0 \, ,
        \end{equation}
        and the equality holds if and only if:
        \begin{itemize} 
        \item the top-level curve $u_\infty$ on $u^{-1}_\infty(Y\times (0,1))$ is contained in $\gamma \times (0,1)$, where $Y\times (0,1)_r$ is the negative end of the symplectization  $Y\times(-\infty,0)_s$ with $r=e^{-s}$ and $\gamma$ is a Reeb orbit on $Y$, 
        \item and $u_\infty$ is independent of $s$ on $u_{\infty}^{-1}(\widehat{X}\backslash Y\times(0,1))\subset \RR_s\times \SSS^1_t$.
        \end{itemize}
        }

        \medskip
        
        In the above inequality, $\mathcal{A}_H$ is the symplectic action, which is well-defined if $\widehat{X}$ is exact, for the (time-dependent) Hamiltonian $H$. Here $H$ is zero on $X\cup Y\times (0,1)$ and with a certain slope near the positive cylindrical end of $\widehat{X}$ as in the setup of symplectic cohomology used in \cite{zhou2023contact}.
        Namely, if $\lambda$ denotes the Liouville form, for any contractible loop $x\colon \SSS^1\to X$ one has
        \[
        \mathcal{A}_H(x) = -\int_{S^1}x^*\lambda + \int_{S^1}H_t\circ x(t) \mathrm{d} t \, .
        \]

        As an explicit proof of this inequality is hard to find in the literature, we give the argument below for the reader's convenience.

        \medskip

        \item \textbf{Index constraint}: {\it the top-level curve must have a non-negative expected dimension.
        More precisely, when $c_1(X)$ is torsion, then 
        \begin{equation}
        \label{eqn:index_constraint}
        m - \sum_{\gamma\in \Gamma}\left(\mu_{CZ}(\gamma)+\frac{\dim W}{2}-3\right)\ge 0 \, ,
        \end{equation}
        where $m$ is the expected dimension of the moduli space without negative punctures (which can be expressed using the Conley-Zehnder indices of the asymptotic Hamiltonian orbits depending on the moduli space under consideration), and $\mu_{CZ}(\gamma)+\frac{\dim W}{2}-3$ is the SFT degree of $\gamma$, see e.g.\ \cite[Proof of Proposition 9.17]{CieOan18}.}

        \medskip
        
        Note that when we speak of the expected dimension, we will always consider the unparametrized moduli space where we quotient out any translation symmetries.
        
        The constraint in \eqref{eqn:index_constraint} follows from a transversality argument, as the top-level curve in all the situations we will consider involves Hamiltonian perturbations and transversality can be achieved for a generic almost complex structure, as those Floer curves have somewhere injective points.
        
    \end{enumerate}

    \medskip

\begin{proof}[Proof of the action constraint]
        In this proof, we write $u$ for the top-level curve $u_\infty$ for simplicity.
        First, notice that there is an inequality 
        \begin{equation}
        \label{eqn:integral_tot_action_constraint_proof}
        \int_{u^{-1}(\widehat{X}\backslash Y\times (0,1))} |\partial_s u|^2+
        \int_{u^{-1}(Y\times (0,1))} u^*\rd \alpha    \ge 0 \, , 
        \end{equation}
        where $Y\times (0,1)_r$ is the negative end of the completion $\widehat{X}$ with Liouville form $r\alpha$. 
        The non-negativity of the first integral is obvious and the one of the second follows from the fact that we take $H$ to be zero on $Y\times (0,1)$, as in the setup of symplectic cohomology in \cite{Zho21}, so that there $u$ solves the Cauchy-Riemann equation with respect to $J$, that is tamed by $\rd\alpha$ on $\xi$. 

        Note that the $s=+\infty$ end of the punctured cylinder in the Floer equation is the input end.
        Then, because $u$ solves the Floer equation and $\lambda\vert_{Y\times \{1 \}}=\alpha$, using Stokes' theorem we can rewrite 
        \begin{eqnarray*}
            \int_{u^{-1}(\widehat{X}\backslash Y\times (0,1))} |\partial_s u|^2 \rd s \rd t & = &    \int_{u^{-1}(\widehat{X}\backslash Y\times (0,1))} \omega (\partial_su,J_t\partial_su)\rd s\rd t \\
            & = & \int_{u^{-1}(\widehat{X}\backslash Y\times (0,1))} \omega (\partial_su,\partial_tu-X_{H_t}(u)) \rd s\rd t\\
            & = &  \int_{u^{-1}(\widehat{X}\backslash Y\times (0,1))} u^*\rd \widehat{\lambda}-\rd H_t(\partial_su)\rd s\rd t\\
            & = &   \mathcal{A}_H(\text{output end})-\mathcal{A}_H(\text{input end})-\int_{u^{-1}(Y\times \{1\})}\alpha 
        \end{eqnarray*}
        The last equality follows from Stokes' theorem and the fact that $H_t$ vanishes on $u^{-1}(Y\times \{1\})$. Here, we implicitly assume $u^{-1}(Y\times \{1\})$ is cut out transversely; if not, by Sard's theorem, one can just consider $u^{-1}(Y\times \{1-\delta\})$ for a generic $0<\delta \ll 1$ instead.
        
        Similarly, the second integral is 
        \begin{equation}
        \label{eqn:integral_2_action_constraint_proof}
        \int_{u^{-1}(Y\times (0,1))} u^*\rd \alpha  = \int_{u^{-1}(Y\times \{1\})}\alpha-\sum_{\gamma\in \Gamma }\int \gamma^*\alpha \; .
        \end{equation}
        
        The claimed action constraint then follows directly by adding the above two expressions.
        The equality case also follows from the fact that the first integrand in \eqref{eqn:integral_tot_action_constraint_proof} is the squared norm of $\partial_s u$, and the non-negative form $u^*\rd \alpha$ is $0$ if and only if $\rd u$ is in $\la R_{\alpha},\partial_r\ra$, where $R_{\alpha}$ is the Reeb vector field of $\alpha$.
        \end{proof}

\bigskip
    
\subsection{Model for computing the connecting map $\delta_\partial$}\label{SS:4.4}
We give an explicit description of the moduli spaces that arise when we apply neck-stretching along the contact boundary of a symplectic manifold to the curves used to define the connecting map $\delta_\partial$. 

\medskip

Let $Y$ be a contact manifold equipped with a non-degenerate contact form $\alpha$. 
In addition let $H$ be a (time-dependent) Hamiltonian on $\widehat{Y}:=\RR_t\times Y \simeq (\RR_+)_r\times Y$, 
where $r=e^t$, such that $H$ is $0$ on $(0,1]_r\times Y$ and is the same as the standard Hamiltonian with slope $D$ on the cylindrical end $[2,+\infty)\times Y$ as in the usual definition of symplectic cohomology.
The almost complex structure $J$ is also assumed to be cylindrical in the cylindrical end, as in the usual symplectic cohomology setup.
We will refer to such a pair $(H,J)$ as \textbf{admissible}. 

Given a multiset $\Gamma=\{\gamma_1,\ldots,\gamma_k\}$ of Reeb orbits and \textbf{non-constant} Hamiltonian orbits $x,y$, we consider the moduli spaces
\begin{equation}\label{eqn:MYH}
    \cM_{Y,H}(x,y,\Gamma) = \bigslant{\left\{
    u:\RR_s\times \SSS^1_t\backslash \{p_1,\ldots,p_k\} \to \widehat{Y} \;
    \left|
    \begin{array}{c}
        \partial_s u+J(\partial_t u-X_{H})=0, \; \lim_{s\to \infty} u(s,\cdot)=x, \\
        \lim_{s\to -\infty} u(s,\cdot)=y, \; \lim_{p_i} u= \gamma_i 
    \end{array}\right.\right\}
    }{\RR} \, .
\end{equation}
Here, the notation $\lim_{p_i} u= \gamma_i$ means that $u$ is asymptotic to $\gamma_i$ at $p_i$ viewed as a negative puncture with a free asymptotic marker mapped to a chosen base point on $\mathrm{im} \gamma_i$; recall that in this region the Hamiltonian is zero and the equation is the usual Cauchy-Riemann equation. The $\RR$-action acts on the domain curve, which will move $p_i$ as well.  

In the cases of interest for us, these moduli spaces will either be smooth $0$-dimensional manifolds or will have negative virtual dimension and thus be empty for a generic choice of admissible $(H,J)$.
Moreover, in this case, the compactification $\overline{\cM}_{Y,H}(x,y,\Gamma)$ of $\cM_{Y,H}(x,y,\Gamma)$ is a mixture of Floer-type breaking at \emph{non-constant} Hamiltonian orbits into curves of the form \eqref{eqn:MYH} and of SFT building breaking at the lower level. Note that no breaking can occur at a constant orbit of $H$ due to the symplectic action constraints. 
We also point out that SFT-type breaking could appear as follows: if in a sequence of curves the $s$-coordinate of the puncture $p_i$ is going to $-\infty$ or $+\infty$, then after bringing back $p_i$ to a compact part of the cylinder using the $\RR$-translation we must see a Floer-type breaking; this particular moduli space of broken configurations is precisely what may appear as the top-level curve after applying neck-stretching to the Floer cylinders corresponding to the differential from $x$ to $y$ in the positive symplectic cohomology.

\medskip

Similarly, we can consider the neck-stretching limit of curves defining $\delta_{\partial}:SH^*_+(W)\to H^*(\partial W=Y)[1]$. We fix a Morse function $f$ on $Y$ along with a generic metric, for a fixed $\eta>0$ sufficiently small and a critical point $p$ of $f$, we will need to consider the compactification $\overline{\cM}_{Y,H}(x,S_p,\Gamma)$  of the following moduli space: 
\begin{equation}\label{eqn:MYP}
    \cM_{Y,H}(x,S_p,\Gamma) = \bigslant{\left\{u:\CC \backslash \{p_1,\ldots,p_k\} \to \widehat{Y}=\RR_+\times Y\left|
	\begin{array}{c}
	\partial_s u+J(\partial_t u-X_{H})=0, \;\,  p_i\ne 0,  \vspace{5pt} \\
	  \displaystyle \lim_{s\to \infty} u(s,\cdot)=x, \;\, \lim_{p_i} u= \gamma_i, \vspace{2pt} \\
   u(0)\in \{ 1-\eta\}\times S_p,
	\end{array}\right.\right\}}
 {\RR} \, ,
\end{equation}
where $S_p$ is the stable manifold of $p$, i.e.\ points that converge to $p$ under the gradient flow of $f$, and
the meaning of $\lim_{p_i}u=\gamma_i$ is the same as before. 
In other words, elements in $\cM_{Y,H}(x,S_p,\Gamma)$ consist of a curve $u$ from a punctured plane that is asymptotic to the Hamiltonian $x$ at the input end and to Reeb orbits at the extra negative punctures modulo the $\RR$-translation along with a tail of gradient line of $f$ from $u(0)$ to $p$ in $\{1-\eta\}\times Y$. 
The compactification  $\overline{\cM}_{Y,H}(x,S_p,\Gamma)$ involves Floer breaking at the input end into curves from \eqref{eqn:MYH} and \eqref{eqn:MYP}, SFT building breaking at the negative punctures as well as Morse flow line breaking at the tail.
Note that in this setting degenerations where one of the punctures goes to $0\in \CC$ can also happen, and they lead to SFT/Floer buildings as discussed in the case of the previous moduli space for a puncture escaping at infinity in the cylinder.
\medskip

\subsection{Reeb dynamics and symplectic cohomology of connected sums}
The Reeb dynamics that result from a connected sum can be described explicitly and were worked out by Yau \cite{Yau}, see also \cite{Laz20}. The following can be proved by combining \cite[\S 4]{Yau} and \cite[Proof of Proposition 4.4]{Laz20}. 
\begin{lemma}\label{lemma:connected_sum}
    Let $(Y_1,\alpha_1)$ and $(Y_2,\alpha_2)$ be two $(2n+1)$-dimensional contact manifolds with contact forms whose Reeb orbits do not cover the whole manifold $Y_1,Y_2$. Then, by picking two points $p_1\in Y_1,p_2\in Y_2$ that do not lie on Reeb orbits, we can attach a Weinstein $1$-handle to them to obtain a contact connected sum $(Y_1\#Y_2,\alpha_1\#_D\alpha_2)$ for $D\gg 0$ with the following properties:
    \begin{enumerate}
        \item All Reeb orbits of $\alpha_1 \#_D \alpha_2$ of period at most $D$ are either from $Y_1$ and $Y_2$ outside the handle attachment region or multiple covers $\gamma_{h,i}^k$ of the Reeb orbit $\gamma_{h,i}$ for $1\le i \le n$ contained in a standard contact sphere of dimension $2n-1$ in the co-core of the $1$-handle, for $k\le k_0=\lfloor D/ \int \gamma_{h,i}^*(\alpha_1\#_D\alpha_2) \rfloor$ (for any $i$, as $\gamma_{h,i}$ has approximately the same small period). 
        \item The Conley-Zehnder index of $\gamma_{h,i}^k$ is $n-1+2(k-1)n+2i \ge n+1$, where the Conley-Zehnder index is computed from bounding discs contained in the handle. That is, the Conley-Zehnder index of those orbits enumerates through $\{n+1,n+3, \ldots, 2k_0n+n-1\}$. It is the canonical global Conley-Zehnder index if $c_1(Y_1)=c_1(Y_2)=0$.
    \end{enumerate}
    The handle grows thinner as $D\to +\infty$.
\end{lemma}
The purpose of assuming that the base points $p_1,p_2$ are not on a closed Reeb orbit is to find sufficiently small Darboux neighborhoods such that the Reeb flow from the Darboux ball will not return within time $D$. Those orbits $\gamma_{h,i}^k$ are from a standard contact sphere of codimension $2$ contained in the co-core of the $1$-handle with an ellipsoid contact form that is close to a round contact form; hence there are precisely $n$ simple Reeb orbits with approximately the same period. Their Conley-Zehnder index is precisely the Conley-Zehnder index on the ellipsoid of dimension $2n-1$, as the linearized flow in the symplectic normal direction is positive hyperbolic, see \cite[Lemma 3.1, Theorem 3.1]{Yau} for the computation in the handle model. 

The effect of contact connected sum on symplectic cohomology is well-understood by the work of Cieliebak \cite{Cie02}. The following is just a filtered version of the corresponding result in symplectic cohomology and was implicitly proved in \cite{Cie02}; see \cite[Proposition 9.19]{CieOan18} for an equivalent statement formulated using symplectic cohomology of cobordisms. We only state a special case for the purpose of \Cref{thm:unique}.
\begin{proposition}\label{prop:symp_sum}
    Assume $W_1$ and $W_2$ are $(2n+2)$-dimensional Liouville fillings of $Y_1$ and $Y_2$ respectively for $n\ge 2$. Let $W_1\natural W_2$ be the strict Liouville filling of $(Y_1\#Y_2, \alpha_1\#_D\alpha_2)$ from \Cref{lemma:connected_sum} (for any $D$ such that \Cref{lemma:connected_sum} applies) such that $c_1(W_1)=c_1(W_2)=0$. Then we have 
    $$SH^{n,<D}(W_1\natural W_2)\simeq SH^{n, <D}(W_1)\oplus SH^{n,<D}(W_2)$$
    via the Viterbo transfer map.
\end{proposition}
\begin{proof}
    We give two explanations of the result, as we believe both are of interest for different reasons: the first uses the specific degree in the statement (i.e.\ the $SH^*$ grading $n$, or equivalently, Conley-Zehnder index $1$); the second uses the general strategy of \cite{Cie02}. 
    
    {\bf Proof 1:} The Conley-Zehnder index of $\gamma_{h,i}^k$ is at least $n+1\ge 3$. Hence, all these orbits have cohomological grading\footnote{I.e.\ the degree in $SH^*$, namely $n+1-\mu_{CZ}$.} at most $0$. Therefore $W_1\natural W_2$ and $W_1\sqcup W_2$ have the same generators with cohomological grading $n,n+1$. They also share the same generators with cohomological grading $n-1$ if $n\ge 3$ and when $n=2$, $W_1\natural W_2$ has an extra grading $1$-generator corresponding to the Morse critical point in the $1$-handle of $W_1\natural W_2$. Using the action filtration as in \cite[\S 4.2]{zhou2023contact}, the Viterbo transfer map from $W_1\natural W_2$ to $W_1\sqcup W_2$ induces an isomorphism on positive cochains (i.e.\ those with generators from Reeb orbits) of cohomological grading $n-1,n,n+1$. Therefore the Viterbo transfer map from $W_1\natural W_2$ to $W_1\sqcup W_2$ induces a cochain isomorphism of cohomological grading $n,n+1$ and surjection on $n-1$, which then gives an isomorphism $SH^{n,<D}(W_1\natural W_2)\simeq SH^{n, <D}(W_1)\oplus SH^{n,<D}(W_2)$.

    {\bf Proof 2:}  Alternatively, the filtered symplectic cohomology can be defined using any Hamiltonian whose slope is the period threshold on the cylindrical end, see \cite[Proposition 2.8]{Zhou_2021}. Thanks to \cite[Lemma 2.5]{Cie02b} or \cite[\S 3.3]{Fau20}, one can then find a Hamiltonian on $W_1\natural W_2$ defining $SH^{*,<D}(W_1\natural W_2)$, such that the generators are those of $W_1\sqcup W_2$ defining filtered symplectic cohomology of $W_1\sqcup W_2$ along with one extra generator -- a critical point -- in the $1$-handle with Conley-Zehnder index $\gg n$\footnote{The Conley-Zehnder index of such a generator is actually $n+2k_0n$, where $k_0$ is as in \Cref{lemma:connected_sum}. 
    This index can be understood as the index of the survivor in cohomology of the complex of check and hat orbits from (2) of \Cref{lemma:connected_sum}, which is the hat version of the largest index in (2) of \Cref{lemma:connected_sum}. The Hamiltonian is not $C^2$-small near the critical points. In the context of $0$ handles, this is \cite[(3f)]{biased}. The case of $1$-handles is precisely the product of a codimension $2$  $0$-handle and $\CC$ with a hyperbolic point. }. That is the unique critical point in the handle with a large positive Hessian in a codimension $2$ symplectic subspace and a hyperbolic Hessian in the normal direction, see \cite[Lemma 2.5 with suitable $A_i,B_i$]{Cie02b}. By \cite[Theorem 1.11, before taking the limit w.r.t. slopes]{Cie02b}, we get isomorphism 
    $$SH^{n,<D}(W_1\natural W_2)\simeq SH^{n, <D}(W_1)\oplus SH^{n,<D}(W_2).$$
    The second approach works for any degree, not just $n$, see \cite[Proposition 9.19]{CieOan18}, where the statement is for full symplectic cohomology while the proof factors through the filtered version.
\end{proof}

\subsection{Moduli spaces for the connecting map in \Cref{thm:unique}} We are now ready to state our main technical result, wherein the neck-stretching is used to analyze precisely what curves appear in the computation of the connecting map $\delta_{\partial}\colon SH_+^{*,<D}(W,\Lambda) \to H^*(Y,\Lambda)[1]$ under the hypotheses of \Cref{thm:unique}. 
Here, $\Lambda$ is the Novikov field, which appears due to the fact that we consider general strong fillings. 
Moreover, despite the fact that we will consider strong fillings of the connected sum, we will use a cobordism trick to reduce to the case that $M$ is Liouville (in fact, Weinstein) fillable.

\begin{proposition}\label{prop:curve}
Let $(Y,\xi)$ denote be the contact connected sum $M\#\partial W_0$ given in the statement of \Cref{thm:unique} with the additional assumption that $M$ is Liouville fillable and the Liouville filling has trivial first Chern class in the case (a) of \Cref{thm:unique}, and  $\beta$ a \emph{non-trivial} class in the image of $H^n(W_0;\Q)\to H^n(\partial W_0;\Q)$.
 Then there are
\begin{itemize}
    \item a contact form $\alpha$ for $\xi$,
    \item an admissible pair $(H,J)$ of a Hamiltonian and an almost complex structure on the completion $\widehat{Y}$ as in \S \ref{SS:4.4},
    \item a Morse function $g$ on $Y$,
    \item  a $\Q$-linear combination $\sum_i a_i x_i$ of non-constant Hamiltonian orbits of $H$,  which are of Conley-Zehnder index $2$ in the case that $c_1(M\#\partial W_0)=0$,
\end{itemize}  

so that the following properties hold:
	\begin{enumerate}
        \item\label{c1} $\sum_i a_i \,\#\overline{\cM}_{Y,H}(x_i,y,\emptyset)=0$, for every non-constant Hamiltonian orbit  $y$; to be precise, the moduli spaces of the type $\overline{\cM}_{Y,H}(x_i,y,\emptyset)$ that are of expected dimension at most $0$ are cut out transversely, and the sum $\sum a_i \,\#\overline{\cM}_{Y,H}(x_i,y,\emptyset)$ for those with expected dimension $0$ vanishes;
          \vspace{3pt}
        \item\label{c2} $\overline{\cM}_{Y,H}(x_i,y,\Gamma)=\emptyset$ for all $i$, $y$ and $\Gamma \ne \emptyset$ multiset of $\alpha$-Reeb orbits;
          \vspace{3pt}
        \item\label{c3}
        For any index $n$ critical point $p$ of $g$, we have
        $\overline{\cM}_{Y,H}(x_i,S_{p},\Gamma)=\emptyset$ for $\Gamma \ne \emptyset$.
        Moreover, $\sum_{i} \sum_{p} a_i\,\#\overline{\cM}_{Y,H}(x_i,S_{p},\emptyset) \, p$  
        represents $0\oplus\beta$ in $H^n(M;\Q)\oplus H^n(\partial W_0;\Q)=H^n(Y;\Q)$ in the Morse cochain complex of $g$, where the inner sum is over all index $n$ critical points $p$ of $g$ and $S_p$ denotes the stable manifold of $p$ with respect to the gradient $\nabla g$.
	\end{enumerate}
\end{proposition}

The above result is what guarantees that cohomology classes in the image of $H^n(W_0;\Q)\to H^n(\partial W_0;\Q)$ are ``detected'' via the connecting map $\delta_\partial$ and is independent of the strong filling.
Indeed, the significance of the properties above is the following: 
\eqref{c1} combined with \eqref{c2} will tell us that $\sum_i a_i x_i$ is closed with respect to the Floer differential and defines an element in the positive symplectic cohomology with respect to \emph{any} symplectic filling;
\eqref{c3} will imply that the degree-$n$ term in $\delta_{\partial}(\sum a_ix_i)$ is exactly $0\oplus \beta$.

\bigskip

We postpone the proof of \Cref{prop:curve}, and use it first to prove \Cref{thm:unique}.

\begin{proof}[Proof of \Cref{thm:unique}]
As in the statement, assume $W$ is a strong filling of $Y$.
By the universal coefficient theorem, it is equivalent to proving the dual statement that $H^n(W;\Q)\to H^{n}(Y;\Q)$ is surjective onto $0\oplus\image(H^n(W_0;\Q)\to H^n(\partial W_0;\Q))$.
By \cite[Theorem 1.7]{MR4100126}, there is a Weinstein cobordism from $M$ to $M'$, such that $M'$ is Weinstein fillable. 
In the case where $\dim M =5$, according to \cite[Proposition 3.4]{BCS2}, we can moreover assume that the Weinstein cobordism has vanishing first Chern class if $M$ does. In particular, $M'$ has a Weinstein filling with vanishing first Chern class.
Then, we have a Weinstein cobordism from $Y$ to $Y':=M'\# \partial W_0$, yielding a strong filling $W'$ of $Y'$. 
Observe that the desired claim follows from the corresponding statement for $Y' = \partial W'$, since the surgeries do not affect the $\partial W_0$-factor.
In other words, it suffices to prove the conclusion of \Cref{thm:unique} under the additional assumption that $M$ is Liouville fillable.

Let $\beta\in \image(H^n(W_0;\Q)\to H^n(\partial W_0;\Q))$; we need to prove that $0\oplus \beta$ is in the image of $H^n(W;\Q)\to H^{n}(Y;\Q)$.
As we can assume that $M$ has a Liouville filling $F$, we can apply \Cref{prop:curve} and we let $\sum a_ix_i$ be the resulting $\Q$-linear combination of non-constant periodic orbits of the Hamiltonian.
Viewing $H$ as a Hamiltonian on $\widehat{W}$ by extending it as zero over the filling,
we can then consider the sum $\sum a_i q^{\mathcal{A}_H(x_i)}x_i$ as an element of the Floer complex of $H$ (considered as a Hamiltonian on $\widehat{W}$), with coefficients in the Novikov field\footnote{Symplectic cohomology of strong fillings requires Novikov coefficients, and different orders of applying direct limit and completion in the construction yield different theories, c.f.\ \cite{zbMATH06864027}. As we only use symplectic cohomology of a finite slope, such subtleties do not arise.}
$\Lambda$, where $q$ is the formal variable in $\Lambda$ and the symplectic action $\mathcal{A}_H(x_i)$ is computed using the Liouville form in the cylindrical end containing $x_i$.

We now claim that, for a sufficiently stretched almost complex structure, the chain $\sum a_i q^{\mathcal{A}_H(x_i)}x_i$ is in fact closed w.r.t.\ the Floer differential so that the sum represents a class in $SH_+^*(W;\Lambda)$. 
Indeed, using neck-stretching along the contact boundary, by \eqref{c1} and \eqref{c2} of \Cref{prop:curve}, one can arrange that all the Floer cylinders used in the differential are contained in the positive end of the symplectization, whose energy is given by the difference of the respective symplectic actions $\mathcal{A}_H$,  which is well-defined on the positive end of the completion. 
Therefore, using \eqref{c1} of \Cref{prop:curve}, we have that
\begin{equation*}
\begin{split}
\delta\left(\sum_i a_i q^{\mathcal{A}_H(x_i)}x_i\right)
=&
\sum_y \sum_i a_i \#\overline{\mathcal{M}}_{Y,H}(x_i,y,\emptyset)
q^{\mathcal{A}_H(x_i)+\mathcal{A}_H(y)-\mathcal{A}_H(x_i)} y\\
=&
\sum_y \underbrace{\left[\sum_i a_i \#\overline{\mathcal{M}}_{Y,H}(x_i,y,\emptyset)\right]}_{ = \, 0}
q^{\mathcal{A}_H(y)} y=0.
\end{split}
\end{equation*}

Similarly, \eqref{c3} of \Cref{prop:curve} implies that $\sum a_i q^{\mathcal{A}_H(x_i)}x_i$ is mapped to $0\oplus \beta$ under the composition 

\[
\Pi_n\circ \delta_\partial\colon SH^*_+(W;\Lambda)\to H^{*+1}(W;\Lambda)\to H^{*+1}(Y;\Lambda)\to H^n(Y;\Lambda) \, ,
\]
where $\Pi_n\colon H^{*+1}(Y;\Lambda)\to H^n(Y;\Lambda)$ denotes the natural projection and $0\oplus\beta\in H^{*}(Y;\Q)$ is considered as an element in $H^*(Y;\Lambda)\cong H^*(Y;\Q)\otimes_\Q \Lambda$.
Hence, $0\oplus \beta$ also lies in the image of $H^{n}(W;\Q)\to H^{n}(Y;\Q)$, as desired.
\end{proof}

\bigskip

\subsection*{Proof of \Cref{prop:curve} when $c_1(Y)=0$}
Roughly speaking, the proof is structured as follows:
We first describe how to find the contact form $\alpha$ on $Y$, an admissible pair $(H,J)$, and the $\Q$-linear combination $\sum_i a_i x_i$ as in the statement.
The auxiliary Morse function is arbitrary.
Then, we proceed to prove all the properties of these objects as claimed in the statement. 
For this, the strategy of stretching the neck will be used repeatedly. We now give the details.

\begin{proof}[Proof of \Cref{prop:curve} for $c_1(Y)=0$] 
By the surgery formulae in \cite[Theorem 1.11]{Cie02b} and \cite[Theorem 5.6]{ BourEkEl}, or alternatively \cite[Theorem 3.2]{zbMATH06864340}, the symplectic cohomology $SH^*(W_0)$ is trivial so that the connecting map $SH^*_+(W_0)\to H^*(W_0)[1]$ is an isomorphism for the flexible filling $W_0$. 
Moreover, according to \cite[Theorem 1.9]{Laz20}, there is a sequence of contact forms $\alpha_1>\alpha_2>\ldots$ and positive numbers $D_i\to \infty$, such that all contractible Reeb orbits $\alpha_i$ with period smaller than $D_i$ are non-degenerate and have Conley-Zehnder index at least $1$\footnote{ \cite[Theorem 1.9]{Laz20} states that the SFT degrees are at least $1$, but the proof actually shows that Conley-Zehnder indices are at least $1$ by \cite[Propositions 4.3, 4.4]{Laz20}, see \cite[Example 2.6 (1)]{zbMATH07706508}.}. As the direct limit of symplectic cohomology of $W_0$ with boundary contact form $\alpha_i$ and period upper threshold $D_i$, through the Viterbo transfer maps and continuation maps, is the symplectic cohomology of $W_0$, see e.g.\ \cite[Proof of Proposition 3.8]{Laz20}, we may assume there exists a contact form $\alpha_0$ on $\partial W_0$ and a positive real number $D$ such that all Reeb orbits of period $<D$ are non-degenerate and have Conley-Zehnder indices at least $1$, with the additional property that $SH_+^{*,<D}(W_0)\to H^*(W_0)[1]$ is surjective, or equivalently $H^*(W_0)\to SH^{*,<D}(W_0)$ is zero.

We now consider the connected sum of the strict contact manifolds $(\partial W_0,\alpha_0)$ and $(M, K\alpha_M)$, for some $K\gg 0$, and contact form $\alpha_M$ on $M$. 
According to \Cref{lemma:connected_sum}, as long as $K\gg D$, we can assume that all the Reeb orbits of period $<D$ on the connected sum $(M\# \partial W_0,\alpha)$ are those in the connected-summand $\partial W_0$, plus some of the multiple covers $\gamma_{h,i}^k$ of simple Reeb orbit $\gamma_{h,i}$ on the co-core of the $1$-handle. Let now $F$ be the Liouville filling of $M$ with trivial first Chern class. 
Using the explicit contact form on the boundary $M\#\partial W_0$ of the boundary connect sum $F\natural W_0$ as described above, \Cref{prop:symp_sum} implies that
$$SH^{n,<D}(F\natural W_0)=SH^{n, <D}(F)\oplus SH^{n,<D}(W_0)$$
via the Viterbo transfer map. As a consequence, we know that $H^n(W_0) \to SH^{n,<D}(W_0) \to SH^{n,<D}(F\natural W_0)$ is zero.

Now,  the long exact sequence for filtered positive symplectic cohomology implies that $SH^{n-1,<D}_+(F\natural W_0)\to H^{n}(F\natural W_0)$ is surjective onto $0\oplus H^n(W_0)$. 
Therefore, we have that 
\[\delta_{\partial}:SH^{n-1,<D}_+(F\natural W_0)\to H^{n}(F\natural W_0) \to H^n(M\# \partial W_0)\] 
is surjective onto $0\oplus \image(H^n(W_0)\to H^n(\partial W_0))$. 
Since $\beta \in \image(H^n(W_0)\to H^n(\partial W_0))$, this then implies that there are $x_i$ Hamiltonian orbits with Conley-Zehnder index $2$ and a linear combination $\sum a_ix_i$ representing an element in the positive symplectic cohomology (i.e.\ in particular closed w.r.t.\ the differential) so that  $\delta_{\partial}(\sum a_ix_i)=\beta$.

Now we apply neck-stretching to the moduli spaces defining the differential and $\delta_{\partial}$ in order to deduce the desired properties. 
The expected dimension for $\overline{\cM}_{Y,H}(x_i,y,\Gamma)$ is 
$$\mu_{CZ}(x_i)-\mu_{CZ}(y)-1-\sum_{\gamma\in \Gamma}(\mu_{CZ}(\gamma)+n-2)\le 2-1-1-\sum_{\gamma\in \Gamma}(n-1)= -(n-1) \, \# \Gamma$$
which is negative if $\Gamma \ne \emptyset$; hence \eqref{c2} follows. 
Similarly, the expected dimension of $\overline{\cM}_{Y,H}(x_i,S_{p},\Gamma)$ is
$$0-\sum_{\gamma\in \Gamma}(\mu_{CZ}(\gamma)+n-2)\leq -(n-1) \, \#\Gamma \, ,$$
which is again negative if $\Gamma\neq \emptyset$,
hence the first claim of \eqref{c3} holds.

When $y$ has Conley-Zehnder index larger than $1$, \eqref{c1} holds since the expected dimension in this case is negative. 
When $y$ has Conley-Zehnder index $1$, by \eqref{c2}, for a sufficiently stretched almost complex structure, the Floer cylinders that are counted when defining $SH^*_+(F\natural W_0)$ are completely contained in the positive end of $\widehat{F\natural W_0}$, i.e.\ the corresponding moduli space can be identified with $\overline{\cM}_{Y,H}(x_i,y,\emptyset)$. Since the above properties on filtered symplectic cohomology do not depend on the choice of almost complex structure, we may assume $\sum a_ix_i$ is closed for a sufficiently stretched almost complex structure. This implies \eqref{c1}.

For the second claim of \eqref{c3}, we apply neck-stretching to the moduli space contributing to $\delta_{\partial}(\sum a_ix_i)$. 
Since we have $\overline{\cM}_{Y,H}(y,S_{p},\Gamma)=\emptyset$ for any $y$ with Conley-Zehnder index at most $1$ by dimension counting, there cannot be other degeneration in the neck-stretching, and so for any sufficiently stretched almost complex structure the curve contributing to $\delta_{\delta}(\sum a_i x_i)$ are completely contained in the positive end of $\widehat{F\natural W_0}$, which can be identified with $\cM_{Y,H}(x_i,S_p,\emptyset)$. Therefore we have 
$$
\sum_{i} \sum_{p} a_i\,\#\overline{\cM}_{Y,H}(x_i,S_{p},\emptyset) p=0\oplus\beta\in H^n(M\#\partial W_0).$$
\end{proof}

\subsection{The subcritical case} As already pointed out above, the fact that $c_1(Y)$ need not vanish means that the Conley-Zehnder indices are not well-defined.
In this case, we make up for this loss with an energy bound (in \eqref{eqn:area} below), which is derived leveraging the subcriticality of $W_0$ in a substantial way.

\subsection*{Geometric set-up} We start by describing an explicit choice of contact forms on $ \partial (V\times \D^2)$ as in \cite[\S 2.1]{product}.
More precisely, one first rounds the corner of 
\begin{equation}
\label{eqn:product_corner}
\left(
\, 
\partial(V\times \D^2) = V\times \SSS^1\cup_{\partial V \times \SSS^1} 
\partial V \times \D^2 \, ,
\; \lambda_V+\frac{r^2}{2\pi} \mathrm{d} \theta\,\right)\, ,
\end{equation}
and
the desired contact form $\alpha$ on $\partial(V\times \D^2)$ is then a perturbation of $\lambda_V+\frac{r^2}{2\pi} \mathrm{d} \theta$ supported on the mapping torus region $V\times S^1$.
As explained in detail in \cite[\S 2.1]{product}, this perturbation is achieved via an auxiliary Morse function $f$ on $V$, satisfying the following properties: 
\begin{itemize}
    \item it is ``self-indexing'', by which we mean that $f(p)>f(q)$ if and only if $\ind (p) > \ind (q)$ for every pair of critical points $p,q$ (in particular, all critical points of the same index have the same image under $f$);

    \item $f(p)$ is approximately $1$ for every critical point $p$ of $f$;
    \item $f\to 1$ when approaching the boundary of $V$, whose precise meaning will be clear from the discussion below.  
\end{itemize}
We use such $f$ to smooth the corners of the hypersurface $\partial(V\times \D^2)$ inside the completion $(\widehat{V}\times \CC, \widehat{\lambda}_V+\frac{r^2}{2\pi}\rd \theta)$ by bumping up the $V\times \SSS^1$ using the graph of $r^2=1/f$. It is clear that we can choose $f$ such that $\rd f$ is $C^1$-small outside a collar neighborhood of $\partial V$ in $V$ and $f$ only depends on the Liouville coordinate near $\partial V$, whose derivatives diverge when approaching the boundary, to make the perturbed hypersurface smooth and of contact type. For an $f$ chosen as above, the perturbed contact form $\alpha$ is given by $\lambda_V+\frac{r^2}{2\pi}\mathrm{d}\theta$ on $\partial V\times \D^2$, and by $\lambda_V +\frac{1}{2\pi f} \mathrm{d} \theta$ on $V\times \SSS^1 = \partial(V\times \D^2)\, \setminus \, \partial V \times \D^2 $.
These are explained in detail in \cite[\S 2.1]{product}. 
Moreover, it satisfies the following properties.
\begin{enumerate}
	\item Each critical point $p$ of $f$ corresponds to a simple non-degenerate Reeb orbit $\gamma_p$, which is the circle over $p$ in the region $V\times \SSS^1\subset \partial(V\times \D^2)$. 
    In particular, the Reeb orbits $\gamma_{p_i}$'s wind around the binding $\partial V \times \{0\} \subset \partial V\times \D^2$ exactly once. 
	\item The period of $\gamma_p$ is $1/f(p)$ (which is approximately $1$ by our choice of $f$), and hence $\gamma_p$ has longer period than $\gamma_q$ if and only if $f(p)<f(q)$.
    \item The set of all Reeb orbits with period $<2$ is just $\{\gamma_p \; \vert \; p\in \mathrm{Crit}(f)\}$, see \cite[Proposition 2.2]{product}. 
\end{enumerate}	

We next explain how to construct an admissible almost complex structure to ensure the energy bound in \eqref{eqn:area} in the following lemma, which we will then use in the proof of \Cref{prop:curve} in the subcritical case.

\begin{lemma}
    \label{lem:contact_form_alm_complex_str_subcritical_case}
    Let $V$ be a $2n$-dimensional Weinstein domain for $n\geq 2$. 
    Consider also $W_0=V\times \DD^2$ with its natural product symplectic structure and $(M,\xi)$ any contact manifold. 

    Then, there are 
    \begin{itemize}
        \item a Morse function $f$ on $V$ as in the definition of the contact form on $\partial(V\times \DD^2)$ above with maximal Morse index $n$,
        \item a contact form $\alpha$ on $Y=\partial W_0\# M$,
        \item an admissible almost complex structure $J$ on the symplectization $\RR_s \times Y$ 
    \end{itemize}
    satisfying the following properties:
    \begin{enumerate}
        \item\label{item:lemma_connected_sum_reeb_orbit_low_action} the Reeb orbits of $\alpha$ that have period $<2$ are
        \begin{itemize}
            \item either from the set $\{\gamma_p \mid p \in \mathrm{Crit}(f)\}$, where each $\gamma_p$ has period $1/f(p)$ and has linking number $1$ in $Y$ with the binding $\partial V$ of the natural trivial open book on $\partial(V\times \DD^2)$,
            \item or a multiple cover of a simple Reeb orbit $\gamma_{h,i}$ in the belt sphere of the $1$-handle in the connected sum, which has zero linking number with the binding;
        \end{itemize}
        \item\label{item:lemma_connected_sum_reeb_orbit_handle} the simple Reeb orbit $\gamma_{h,i}$ satisfies the action constraint
    \begin{equation}\label{eqn:condition}
        \int \gamma_{h,i}^*\alpha > \frac{1}{f(p)}-1\, ,
        \tag{*}
    \end{equation}
    for all critical points $p$ of $f$ that are \emph{not} the minimum and $1\le i \le n$;
    
    \item\label{item:lemma_connected_sum_almost_complex_str} all the maps $u$ defined on a subset of $\RR\times \mathbb{S}^1$ that are contained in $\RR_s\times \partial V\times \DD^2$ and satisfy the Floer equation with respect to a Hamiltonian depending only on $s$, project via the natural projection $\pi_{\DD}\colon \RR_s\times \partial V\times \DD^2\rightarrow \DD^2$ to a holomorphic curve (with respect to the standard $i$ on $\DD^2$), and 
    we have 
    \begin{equation}\label{eqn:area}
        \int u^*\rd\alpha \ge \int (\pi_{\D}u)^*\rd \left(\frac{r^2}{2\pi}\rd \theta\right)\ge 0.
        \tag{**}
    \end{equation}
    \end{enumerate}
    
\end{lemma}
Recall that, according to \cite{Cie02}, any subcritical Weinstein domain $W_0$ is Liouville homotopic to the symplectic product $V\times \DD^2$ for some Weinstein manifold $V$.
In particular, the assumptions in \Cref{lem:contact_form_alm_complex_str_subcritical_case} do indeed correspond to the hypothesis of \Cref{thm:unique} (in the subcritical $W_0$ case).

We now give a proof of this preparatory lemma.

\begin{proof}[Proof of \Cref{lem:contact_form_alm_complex_str_subcritical_case}]
The connected sum $M\# \partial(V\times\D^2)$ is given by contact surgery of index $1$ on the disjoint union $M\sqcup \partial(V\times\D^2)$; equivalently, it is the convex contact boundary of a Weinstein $1$-handle attachment on the disjoint union of  $M$ with $\partial(V\times \D^2)$. This surgery (or handle attachment) can be performed inside the subset $V\times \SSS^1\subset \partial(V\times\D^2)$, away from every critical point of $f$ times the $\SSS^1$ factor.
We first arrange property \eqref{item:lemma_connected_sum_reeb_orbit_handle}. 
For this, we start with a function $f_0$ on $V$ satisfying all the properties (of the function that was then denoted $f$) described right before the statement of this lemma. 
We now explain in steps how such functions can be modified in order to arrange property \eqref{item:lemma_connected_sum_reeb_orbit_handle}. 
\begin{enumerate}
    \item Let $x_0$ be the unique minimum point of $f_0$. We may assume the Liouville form on $\lambda_{V}$ on a ball $B$ around $x_0$ is the standard radical Liouville form on $\CC^n$. Then $(V\backslash B,\lambda_V)$ is a Liouville cobordism from the standard contact sphere with the standard contact form.   
    \item We may assume the $f_0$ near $\partial B$ only depends on the radius. More precisely, those can be achieved by first finding a chart near $x_0$ with $f_0$ depending only on the radius, then changing the Liouville form $\lambda_V$ to the standard one via a compactly supported Liouville homotopy with the same symplectic form. 
    \item We attach the handle to $(B\times \SSS^1, \lambda_V+\frac{1}{2\pi f_0}\rd \theta)$. In particular, the size of the handle is fixed at this stage.
    \item On $V\backslash B$, we modify $f_0$ to $f_{\epsilon}=1+\epsilon(1-f_0)$. As $\int\gamma_{h,i}^*\alpha$ is fixed already, it is clear that for $0<\epsilon\ll 1$, property \eqref{item:lemma_connected_sum_reeb_orbit_handle} holds for critical points of $f_{\epsilon}$ on $V\backslash B$.
    \item $f_{\epsilon}$ and $f_0$ do not match on $\partial B$, where $f_{\epsilon}$ has a larger value and smaller derivatives. However, we can scale up the Liouville form  $\lambda_V$ on $V\times B$ to $K\lambda_{V}$ for $K\gg 0$ and insert a large trivial cobordism $\partial B\times (1,K)$ of $\partial B$ in between $V\times B$ and $B$. The glued domain is denoted by $V'$. It is diffeomorphic to $V$, one can also view this procedure as changing the Liouville form $\lambda_V$ on the fixed $V$. Then we have $f_\epsilon$ defined on $V\backslash B$ part and $f_0$ is defined on $B$ part, we can use a function only depending on the $r$-coordinate of $\partial \times (1,K)_r$ to interpolate between $f_0,f_{\epsilon}$. The glued function on $V'$ is denoted by $f$.
    \item We need to argue that $f$ on $V'$ has the same property discussed before the lemma. The most important property is regarding the short Reeb orbits. By the discussion in \cite[\S 2.1]{product} and \cite[Proposition 6.4]{Zho21}, this essentially requires the Hamiltonian vector field of $1/f$ w.r.t.\ the symplectic form $\rd\lambda_V$ \footnote{\cite[Proposition 6.4]{Zho21} uses $\rd(\lambda_V/f)$ and a horizontal lift. In our case here, it is straightforward to compute that the formula for Reeb vector in \cite[Proposition 6.4]{Zho21} is $(1/f-\lambda_V(X_{1/f}))^{-1}(\partial_\theta-X_{1/f})$, where $X_{1/f}$ is tha Hamiltonian vector field of $1/f$ using $\rd \lambda_V$.} to have no non-constant orbits with short periods. This is why we need the $C^1$-smallness of $\rd f$ outside of a collar neighborhood of $\partial V$. The procedure of inserting a large trivial cobordism, i.e.\ blowing up the symplectic form, will scale down the Hamiltonian vector field even further. Therefore, it will not break the property on short Reeb orbits. In the following, we will return to the notation of $(V,\lambda_V)$, which is considered as having gone through the modification above. 
\end{enumerate}
By scaling up the contact form on $M$ and \Cref{lemma:connected_sum},  we can arrange that the Reeb orbits on $M\#\partial(V\times \D)$ of period $<2$ are just the $\gamma_p$'s and (some of) the multiple covers of simple Reeb orbits $\gamma_{h,i}$ on the belt sphere of the handle, thus proving property \eqref{item:lemma_connected_sum_reeb_orbit_low_action} modulo the claim on linking numbers. As the binding $V\times \{0\}$ is null homologous, the linking number can be defined using any bounding disk of the Reeb orbit. For $\gamma_p$, we can use the natural disk from the $\DD^2$ factor, and for $\gamma_{h,i}^k$, we can use a bounding disk contained in the handle region to deduce the claim concerning the linking number.
\medskip

All that is left to do is describe the desired almost complex structure $J$, and prove property \eqref{item:lemma_connected_sum_almost_complex_str}. On the neighborhood $(\partial V \times \D^2,\alpha=\lambda_V+\frac{r^2}{2\pi}\rd \theta)$ of the binding, the contact structure is given by
\[
\xi_{\partial V }\oplus \la \partial_x+\frac{y}{2\pi}R_{\lambda_V},\partial_y-\frac{x}{2\pi}R_{\lambda_V} \ra,
\]
where $\xi_{\partial V}\subset T\partial V$ is the contact structure $\ker \lambda_V$, $R_{\lambda_V}$ is the Reeb vector field of $(\partial V, \lambda_V)$ and $x,y$ are coordinates on $\D^2$. 
Moreover, the Reeb vector field $R$ of $\alpha_0$ on the neighborhood $\partial V \times \D^2$ of the binding is just $R_{\lambda_V}$. 
We then use an almost complex structure $J$ on $\RR_s \times \partial V \times \D^2 $ satisfying the following properties:
\[
J:\xi_{\partial V} \to \xi_{\partial V} \text{ is compatible with } \mathrm{d} \lambda_V,
\quad J(\partial_x+\frac{y}{2\pi}R_{\lambda_V})=\partial_y-\frac{x}{2\pi}R_{\lambda_V}
\, \text{ and } \, 
J(R_{\lambda_V})=-\partial_s \, .
\] 
Note in particular that $\pi_{\D}$ is $(J,i)$ holomorphic, where $i$ is the standard complex structure on $\DD^2$. 

Moreover, if we have a Hamiltonian function $H$ on $\RR_s\times \partial V\times \DD^2$ that only depends on the $s$-coordinate, its Hamiltonian vector field is parallel to $R_{\lambda_V}$. 
If $u$ solves the Floer equation in $\RR_s\times \partial V \times \D^2$, namely
$$\partial_su+J(\partial_tu-X_H)=0\, ,$$
then $\pi_{\D} u$ is a holomorphic map to $\D$. Note that 
$$u^*\rd \alpha(\partial_su,\partial_tu) = \rd\alpha (\partial_s u, J\partial_su)=\rd\lambda_V(\pi_{\xi_{\partial V}}\partial_s u,\pi_{\xi_{\partial V}}J\partial_su)+\rd\left(\frac{r^2}{2\pi}\rd \theta \right)(\pi_{\DD}\partial_su,\pi_{\DD} J\partial_su)$$
where $\pi_{\xi_{\partial_V}}$ is the natural projection to $\xi_{\partial V}$ and $\pi_{\DD}$, by a bit abuse of notation, also denotes the natural projection to the $\DD^2$-direction on the tangent bundle. 
Now, we can compute
$$ \rd\lambda_V(\pi_{\xi_{\partial V}}\partial_s u,\pi_{\xi_{\partial V}}J\partial_su)=\rd\lambda_V(\pi_{\xi_{\partial V}}\partial_s u,J\pi_{\xi_{\partial V}}\partial_su)\ge 0$$
and 
\begin{eqnarray*}
    \rd\left(\frac{r^2}{2\pi}\rd \theta \right)(\pi_{\DD}\partial_su,\pi_{\DD} J\partial_su) & = & \rd\left(\frac{r^2}{2\pi}\rd \theta \right)(\pi_{\DD}\partial_su,i\pi_{\DD} \partial_su) \\
    & = & \rd\left(\frac{r^2}{2\pi}\rd \theta \right)(\partial_s(\pi_{\DD}\circ u),i\partial_s(\pi_{\DD}\circ u)) \\
    & = & \rd\left(\frac{r^2}{2\pi}\rd \theta \right)(\partial_s(\pi_{\DD}\circ u),\partial_t(\pi_{\DD}\circ u))\\
    & = & (\pi_{\DD}\circ u)^* \rd\left(\frac{r^2}{2\pi}\rd \theta \right)(\partial_su,\partial_tu) \, .
\end{eqnarray*}
Therefore, \eqref{eqn:area} holds.
This concludes the proof of property \eqref{item:lemma_connected_sum_almost_complex_str}, and hence of  \Cref{lem:contact_form_alm_complex_str_subcritical_case}.
\end{proof}

\begin{proof}[Proof of \Cref{prop:curve} for $W_0$ subcritical]
The contact form $\alpha$ (depending on a function $f$ on $V$ as above) and almost complex structure $J$ are exactly those given by \Cref{lem:contact_form_alm_complex_str_subcritical_case}; 
we also use here $\gamma_p$ and $\gamma_{h,i}$ as given by said lemma. We also fix a Morse function $g$ on $\partial(V\times \DD^2)$.
We now proceed to describe the desired Hamiltonian, $\Q$-linear combination, and then to prove the desired properties.

\medskip

We start by describing the Hamiltonian.
First, on the filling $W_0=V\times \D^2$ of $\partial(V\times \D^2)$, one can consider a Hamiltonian $H$ which is linear and with slope $2$ outside a cylindrical neighborhood of the boundary, which vanishes on a slightly shrinked version of the filling, and which interpolates between these two behaviors along the cylindrical end, where it depends only on the Liouville coordinate. To utilise the positivity of contact energy in \eqref{eqn:contact}, we will use a Hamiltonian that only depends on $r$. This requires the cascades model to define the symplectic cohomology to deal with Morse-Bott families from the $S^1$-symmetry following \cite{BODuke} or see \cite[\S 2.2]{product}. From each one of the Reeb orbits $\gamma_p$'s from \Cref{lem:contact_form_alm_complex_str_subcritical_case}, with $p$ a critical point of the Morse function $f$, we associate with two generators $\check{\gamma}_p,\hat{\gamma}_p$ for symplectic cohomology. The moduli spaces in the cascades model, analogous to those in \S \ref{SS:4.4} were described in \cite[\S 2.5]{product}. In practice, one can neglect the difference between cascades and non-cascades versions of those in \S \ref{SS:4.4} as they enjoy the same properties (virtual dimensions, energy, etc.). 

As far as the desired $\Q$-linear combination is concerned, we also point out that, for every 
\[
\beta \in \image(H^n(V\times \D) \to H^n(\partial(V\times \D)))\cong H^n(V) \, ,
\]
we can first represent $\beta=\sum a_i p_i$ in Morse cohomology, with $\ind (p_i)=n$. Then, according to \cite[Proposition 2.6]{product}, $\sum a_i\check{\gamma}_{p_i}$ in $SH^*_+$ satisfies $\delta_{\partial}(\sum a_i\check{\gamma}_{p_i})=\beta$.

We now look at the connected sum of $\partial W_0$ with $M$ described before, and use the special contact form $\alpha$ from \Cref{lem:contact_form_alm_complex_str_subcritical_case} as described above.
Recall in particular that the Reeb orbits on $M\#\partial(V\times \D)$ of period $<2$ are just the $\gamma_p$'s and (some of) the multiple covers of simple Reeb orbits $\gamma_{h,j}$ in the belt sphere of the handle, and that the inequality \eqref{eqn:condition} holds for any critical point $p_i$ which is not the minimum of $f$. Unlike the argument in the flexible case, we will use a Hamiltonian on the Liouville completion of $(V\times \DD^2)\natural F$ that has slope $2$ outside a cylindrical neighborhood of the boundary and vanishes on a slightly smaller copy of the filling.

Note that, since the $p_i$'s in the above identity $\beta=\sum a_i p_i$ all have index $n$, the associated $\gamma_{p_i}$'s have the minimum period among all the Reeb orbits of the form $\gamma_{p}$.
In particular, \eqref{c1}, \eqref{c2}, and the first claim of \eqref{c3} will directly follow from action considerations, provided we can rule out the possibility of the $\gamma_{h,j}^k$'s, or the corresponding Hamiltonian orbits, appearing as negative asymptotics. 
To do this, we use an argument inspired by \cite[Lemma 5.5]{CDvK16}, which leverages the energy lower bound described in \eqref{eqn:area} of \Cref{lem:contact_form_alm_complex_str_subcritical_case} in the binding region. 
(In fact, our situation is simpler than that in \cite{CDvK16}, as we are working with the trivial open book $\partial(V\times \D)$.)

As the only Reeb orbits of action $<2$ on the connect sum have been arranged to be the ones from $\partial(V\times \D^2)$ and the iterates of the Reeb orbit on the belt sphere of the handle, for the moduli spaces in the conclusion of the proposition, it is enough to look at $y=\overline{\gamma}_{h,j}^k\in \{\check{\gamma}_{h,j}^k,\hat{\gamma}_{h,j}^k\}$ and $\Gamma$ a multiset of Reeb orbits taken from the $\{\gamma_{h,j}^k\}_{k,j}$.
Then, for any curve $u$ in $\overline{\cM}_{Y,H}(\check{\gamma}_{p_i},\overline{\gamma}_{h,j}^k,\Gamma)$, we have 
\begin{equation}\label{eqn:contact}
     \int_{S^1} \check{\gamma}_{p_i}^*\alpha-
     \int_{S^1}(\gamma_{h,j}^k)^*\alpha -
     \sum_{\gamma \in \Gamma} \int_{S^1}\gamma^*\alpha
     \; \ge \;
     \int_{u^{-1}(\RR_t\times\partial V \times \D^2)}
     u^*\mathrm{d} \alpha
     \; \ge \;
     \int_{u^{-1}(\RR_t\times\partial V \times \D^2)}
     (\pi_{\D}u)^*\mathrm{d}\left(\frac{r^2}{2\pi}\mathrm{d} \theta\right)=1 
     \, ,
\end{equation}
where the first inequality follows from Stokes' theorem as $u^*\rd \alpha \geq 0$ on the whole curve as $H$ only depends on $r$ in this region, the second inequality from \eqref{eqn:area}, and the last equality from the fact that the linking number of the $\gamma_{p_i}$'s and of the $\gamma_{h,j}^k$'s around the binding $\partial V$ are $1$ and $0$ respectively. 
Hence we arrive at a contradiction with the inequality \eqref{eqn:condition}, provided such a curve $u$ in $\overline{\cM}_{Y,H}(\check{\gamma}_{p_i},\overline{\gamma}_{h,j}^k,\Gamma)$ exists. 
This establishes \eqref{c1}, \eqref{c2} and the first claim of \eqref{c3}. 

Lastly, using the Viterbo transfer map we see that, in the connected sum, $\sum a_i\check{\gamma}_{p_i}$ on $M\#\partial(V\times \D)$ is mapped to $\sum a_i\check{\gamma}_{p_i}$ on $\partial(V\times \D)$. This can be seen using the action filtration as $\gamma_{p_i}$ has the minimal period from those $\gamma_p$, the transfer map from $\check{\gamma}_{p_i}$ are built from reparameterization of the trivial cylinder, as explained in \cite[\S 4.2, (4.3)]{zhou2023contact}.
Hence $\delta_{\partial}(\sum a_i\check{\gamma}_{p_i})=0\oplus \beta$ for the Liouville filling $F\natural (V\times \D)$. 
Combining with the just proved emptiness of the moduli spaces of the points \eqref{c1}, \eqref{c2}, and \eqref{c3} in the statement, we then get the second claim of \eqref{c3} by a neck-stretching argument by applying the proof of \Cref{prop:curve} in the previous case {\em mutatis mutandis}.
\end{proof}

\section{Non-fillable contact structures}\label{sec:geometric_construction}

\subsection{Tight non-fillable structures on spheres}
We first construct tight, non-fillable contact structures on $\SSS^{2n+1}$, for $n\geq 2$ to prove the following stronger version of \Cref{thm:tightnonfill}. 
\begin{theorem}\label{thm:tightnonfill_alge}
	For every $n \ge 2$, the sphere $\mathbb S^{2n+1}$ admits an \textbf{algebraically} tight, non-strongly fillable contact structure that is homotopically standard.
\end{theorem}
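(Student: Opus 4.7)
The plan is to construct $\xi$ on $\SSS^{2n+1}$ by a Bourgeois construction followed by contact surgery, check algebraic tightness via the ADC formalism of \Cref{subsec_ADC} (with \Cref{cor:alg_tight_Bour} handling the $n=2$ case), and obstruct strong fillability by combining \Cref{prop:cap_gen} with the homological result \Cref{thm:unique}.

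For the construction, I fix a large integer $k$ to be specified at the end, and start with the Milnor open book on $\SSS^{2n-1}$ associated to the $A_{k-1}$-singularity. Its Weinstein page $\Sigma=V_{n-1}(k,2,\dots,2)$ satisfies $H_{n-1}(\Sigma;\QQ)=\QQ^{k-1}$, so that $V_+=\Sigma\times D^*\SSS^1$ has $H_n(V_+;\QQ)=\QQ^{k-1}$. Applying \Cref{thm:bourgeois} gives a contact structure on $\SSS^{2n-1}\times\TT^2$; I then perform a finite sequence of contact surgeries inside the Giroux piece $V_+\times\SSS^1$, taken along isotropic spheres when $n\ge 3$ and loose Legendrians when $n=2$, chosen so that the resulting smooth manifold is the standard $\SSS^{2n+1}$. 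By \Cref{rem:almost_cont} together with possibly cancelling stabilizations, the result can be arranged to lie in the standard almost contact class. Algebraic tightness is then automatic: for $n\ge 3$ the Bourgeois contact structure is $1$-ADC by \Cref{lem:kADC_stable} and \Cref{ex:Brieskorn_ADC}, and this property is preserved under subcritical/flexible surgery, so the resulting contact sphere remains $1$-ADC and hence algebraically tight by \Cref{prop:property}(5)(c); for $n=2$ one instead picks a $3$-dimensional open book with non-sporadic page and invokes \Cref{cor:alg_tight_Bour} directly.

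To obstruct strong fillability, suppose for contradiction that $(\SSS^{2n+1},\xi)$ admits a strong filling $W$. By \Cref{prop:cap_gen} there is a strong cobordism $W_{cap}$ from $\SSS^{2n+1}$ to $M_+'=\partial W_0$, where $W_0$ is obtained from $V_+\times\DD^2$ by attaching the analogous Weinstein handles (purely subcritical for $n\ge 3$, flexible for $n=2$); moreover $c_1(W_0)=0$ since the Milnor page satisfies $c_1(\Sigma)=0$. Gluing yields a strong filling $\widetilde W=W\cup W_{cap}$ of $M_+'$. Two observations now collide: on one hand, by the second part of \Cref{prop:cap_gen} a copy of $V_+\subset M_+'$ is isotopic within $\widetilde W$ into $\SSS^{2n+1}\subset\widetilde W$, on which $H_n(\cdot;\QQ)=0$, so the map $H_n(V_+;\QQ)\to H_n(\widetilde W;\QQ)$ vanishes; on the other hand, via the direct inclusion $V_+\hookrightarrow V_+\times\DD^2\subset W_0$ the map $H_n(V_+;\QQ)\to H_n(W_0;\QQ)$ has rank at least $(k-1)-s$, where $s$ is the ($k$-independent) number of index-$(n+1)$ surgery handles, which is zero when $n\ge 3$. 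Applying \Cref{thm:unique} with $M=\emptyset$, the kernel of $H_n(M_+';\QQ)\to H_n(W_0;\QQ)$ contains that of $H_n(M_+';\QQ)\to H_n(\widetilde W;\QQ)$; hence the composite $H_n(V_+;\QQ)\to H_n(W_0;\QQ)$ must vanish, forcing $(k-1)-s\le 0$. Taking $k>s+1$ produces the desired contradiction.

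The main obstacle is the geometric bookkeeping in the construction step: one must arrange the contact surgeries to lie within the prescribed Giroux piece, to produce the standard smooth $\SSS^{2n+1}$ inside its standard almost contact class, and to be of the right flavor (purely subcritical for $n\ge 3$; flexible with a $k$-independent count of critical handles for $n=2$), so that the hypotheses of \Cref{thm:unique} can indeed be matched.
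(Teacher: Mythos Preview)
Your strategy is the same as the paper's: Bourgeois structure on $\SSS^{2n-1}\times\TT^2$, then contact surgery to a sphere, algebraic tightness via ADC/\Cref{cor:alg_tight_Bour}, and the fillability obstruction via \Cref{prop:cap_gen} plus \Cref{thm:unique} with $M=\emptyset$. The admitted gap (``geometric bookkeeping'') is precisely what the paper spells out, and since you leave it open, let me indicate what is actually needed and where your sketch drifts.

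\textbf{The surgeries are completely explicit.} One does exactly two surgeries on isotropic circles generating $\pi_1(\TT^2)$ and one surgery on a $2$-sphere killing the resulting $H_2$; the trace of these surgeries, attached to $\DD^{2n}\times\TT^2$, is a simply connected homology ball, hence a ball by the $h$-cobordism theorem, so the boundary is the standard smooth $\SSS^{2n+1}$. For $n\ge3$ all three are subcritical; for $n=2$ the last one is along a loose Legendrian. The surgeries can be placed in $V_+\times\SSS^1$ because the isotropic $h$-principle is $C^0$-small (so the circles can be taken near a single $\TT^2$-fibre) and the $2$-sphere lives in the trace of that neighbourhood. In particular the number of $q$-surgeries with $q=n$ is $0$ for $n\ge3$ and $1$ for $n=2$, so your parameter $k$ is not needed: the paper simply takes the $A_1$ open book for $n\ge3$, and any non-sporadic page with $\operatorname{rk} H_1(\Sigma)\ge 2$ for $n=2$.

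\textbf{The almost contact class.} Your appeal to \Cref{rem:almost_cont} and ``cancelling stabilizations'' is not how this goes. The almost contact structure extends as an almost complex structure over the smooth filling just constructed, which is a ball; since any two almost complex structures on $\DD^{2n+2}$ are homotopic, the boundary structure is homotopically standard. No stabilizations are involved.

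\textbf{Tightness after surgery.} Rather than asserting that $1$-ADC is preserved under flexible surgery (which requires extra input), the paper simply uses that surgery gives a Liouville cobordism and invokes \Cref{prop:property}(3): algebraic tightness of the Bourgeois manifold passes to the concave end of any Liouville cobordism. This is cleaner and avoids any index bookkeeping on the new orbits.

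\textbf{The $n=2$ case.} As written your argument is incoherent here: you switch to an unspecified non-sporadic open book for tightness, but your fillability obstruction was set up using the $A_{k-1}$ page. You need the \emph{same} open book for both parts. Either observe that for $k\ge3$ the $A_{k-1}$ page on $\SSS^3$ is already non-sporadic, or follow the paper and take any non-sporadic page with $H_1$ of rank $\ge 2$, which suffices since only one surgery is of index $2$.
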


Note that \Cref{thm:tightnonfill_alge} directly gives \Cref{thm:tightnonfill} because of the general fact that algebraic tightness implies tightness, as recalled in \Cref{prop:property} (1).

\begin{proof}

\textbf{Step 1: the geometric construction.} We consider the Bourgeois contact structure coming from the following choice of open book on the sphere $\SSS^{2n-1}$.
For $n\geq 3$, we chose the open book coming from the $A_{1}$-singularity, with binding given by $B=\Sigma_{n-1}(2,2,\cdots,2) \cong S^*\SSS^{n-1}$; this is just the positive stabilization $\SSS^{2n-1}=OB(D^*\SSS^{n-1},\tau)$ of the standard open book $OB(\D^{2n-2},\Id)$, where $\tau$ is the Dehn-Seidel twist, as defined in detail in \cite{Sei03} following the idea in \cite{Arn95}.
For $n=2$, we chose an open book $OB(\Sigma, \psi)$ for the standard contact structure on $\SSS^3$, whose page $\Sigma$ is non-sporadic and has homology $H_1(\Sigma)$ of rank at least $2$; for example, one can take repeated positive stabilizations of the trivial open book.

Notice that, topologically, the Bourgeois manifold associated to these open books is of the form $\SSS^{2n-1} \times \TT^2$. 
We then perform subcritical/flexible surgeries to topologically obtain a sphere $\mathbb S^{2n+1}$. 
More precisely, first observe that, by the $h$-principle for isotropic embeddings \cite[Section 12.4]{EliMisBook} and for loose Legendrians \cite{MurLoose} (see also \cite[Theorem 7.16]{CieEliBook}), one can always realize smooth surgeries of index $1$ and $2$ via contact surgeries. 
In view of this, we first perform two index $1$ surgeries that kill the generators of the fundamental group in the torus factor, to obtain a simply connected manifold with the homology of $\mathbb{S}^{2n-1}\times \mathbb{S}^2$. 
Notice that, since these generators are non-trivial in homology and we start from a Bourgeois contact structure with trivial $c_1$, the resulting cobordism will have trivial $c_1$. 

Now, observe that the generator of the second homology group of the obtained manifold is spherical by the Hurewicz theorem. 
Moreover, using the fact that the ambient dimension is at least $5$, we can perturb a continuous map $f\colon \SSS^2\to M$ representing such a generator to a smooth embedding $\widetilde{f}\colon \SSS^2\to M$ according to standard transversality theory (see e.g.\ \cite[Theorems 2.6 and 2.13]{HirBook}). 
We can then perform an index $2$ surgery along $\widetilde{f}(\SSS^2)$, which has the effect of killing the generator in degree-$2$ homology.

In order to ensure that this surgery is possible, one needs to make sure that the normal bundle is trivial; however, this is ensured by the assumption that $c_1$ vanishes, since the obstruction to the normal bundle being trivial is given by the second Stiefel-Whitney class $w_2$, to which $c_1$ reduces modulo $2$. 
Finally, the corresponding handle attachments applied to the boundary of the null-cobordism $\DD^{2n} \times \mathbb \TT^2$ yield a simply connected homology ball $X$, which is then smoothly a ball by the $h$-cobordism theorem, giving that the manifold obtained by these surgeries is indeed a smoothly standard sphere. 
Realizing these surgeries as Weinstein handle attachment according to the above-mentioned $h$-principles, we thus obtain the desired contact structure $\xi_{ex}$ on $\SSS^{2n+1}$.

\smallskip

\textbf{Step 2: algebraic tightness.} Now, we claim that $(\SSS^{2n+1},\xi_{ex})$ is \emph{algebraically tight}. 
Indeed, the Bourgeois structures before surgeries are algebraically tight: for $n=2$ this is the content of \Cref{cor:alg_tight_Bour}, while for $n\geq 3$ these structures are in fact $1$-ADC by \Cref{ex:Brieskorn_ADC}, hence algebraically tight in view of \Cref{prop:property}. 
Then, since algebraic tightness is preserved under contact surgery, the claim follows.

\smallskip

\textbf{Step 3: the almost contact class.} As explained in Step 1, there is a Weinstein cobordism from a Bourgeois structure on $\SSS^{2n-1}\times \TT^2$, associated to the standard contact structure on $\SSS^{2n-1}$ (for some open book that is here irrelevant), to the contact sphere $(\SSS^{2n+1},\xi_{ex})$.
According to \Cref{rem:almost_cont}, this is in particular an \emph{almost complex} cobordism from a split almost contact structure $\eta:=(\xi_{std}, d\alpha_{std})\oplus (T\TT^2,\Omega)$ to the underlying almost contact structure $\zeta$ to $\xi_{ex}$ on $\SSS^{2n+1}$.
Now, as $\eta$ admits as almost complex filling $(B^{2n}\times \TT^2,\omega_{std}\oplus \Omega)$, this gives an almost complex filling $(X,J)$ of $(\SSS^{2n+1},\xi_{ex})$,
where $X$ is exactly the simply connected homology ball, and hence smoothly a ball by the h-cobordism theorem, that was described in Step 1. 
In particular, as there is only one almost complex structure up to homotopy on the ball, the almost contact structure $\zeta$ underlying $\xi_{ex}$ is homotopic to the standard one on $\SSS^{2n+1}$, as desired.

\smallskip

\textbf{Step 4: non-fillability.} To obstruct fillability, we want to apply \Cref{thm:unique} to the filling given by stacking the hypothetical filling of the contact structure on the sphere with the strong cobordism of \Cref{prop:cap_gen}, and so we first check the hypotheses of this proposition.
First, we remark that the $1$-surgeries can all be done along curves lying in a small neighborhood of a fixed $\TT^2$-fiber, since the isotropic $h$-principle is $C^0$-small, and that the $2$-surgery can be performed on a sphere contained in the trace of this small neighborhood under the initial surgery. 
In particular, all can be done on one side of the $\SSS^1$-equivariant decomposition for the Bourgeois contact structure.
Now, we have $V_+ = \Sigma \times D^*\SSS^1$ and $M'_+$ is the resulting flexibly fillable contact manifold given by first capping then doing surgery as in \Cref{prop:cap_gen}. The flexible filling $W_0$ of $M'_+$ is obtained from attaching two $2$-handles and one flexible $3$-handle along a loose Legendrian sphere to the subcritical domain $V_+\times \DD^2$.
If $(\SSS^{2n+1},\xi_{ex})$ has a strong filling $W$, we get a strong filling $W'$ of $M'_+$ by stacking the strong cobordism of \Cref{prop:cap_gen} on $W$.
Note that there exists a homology class $\beta \in H_n(M'_+;\Q)$ that is non-trivial in the flexible filling $W_0$ and is contained in  $V_+\times \{pt\}$. 
Indeed, since we can always find a non-trivial homology class $\beta'\in H_n(W_0;\Q)$ contained in $V_+\times \{pt\}$ by our assumption on $\Sigma$ (recall that we arranged, at the beginning of the proof, that $H_1(\Sigma)$ has rank at least $2$ when $n=2$), we can take $\beta$ to be any preimage of $\beta'$ under the surjective map $H_n(M'_+;\Q)\to H_n(W_0;\Q)$. 
Since $V_+\times \{pt\}$ can be isotoped to $\SSS^{2n+1}$ in the cobordism, $\beta$ must be trivial in the glued strong filling contradicting \Cref{thm:unique} for $M=\emptyset$.
\end{proof}

\begin{remark}[Freedom of choice of Open Book]
For concreteness, we chose to take the open book determined by the $A_1$-singularity in the proof above. 
This said, any nontrivial open book whose binding is $(-1)$-index positive and such that the page has a non-trivial middle homology group would suffice. 
\end{remark}

\begin{theorem}\label{thm:kill_strong}
	Let $(\SSS^{2n+1},\xi_{ex})$ be an exotic contact sphere in \Cref{thm:tightnonfill_alge}.
    Then $(M,\xi)\#(\SSS^{2n+1},\xi_{ex})$ is not strongly fillable for any contact manifold $M$ if $n\ge 3$. When $n=2$, the same holds if $c_1(M)$ vanishes.
\end{theorem}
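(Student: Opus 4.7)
The plan is to combine the strategy used in the proof of \Cref{thm:tightnonfill_alge} with the full form of \Cref{thm:unique}, now applied with nontrivial $M$. Assume for contradiction that $(M,\xi)\#(\SSS^{2n+1},\xi_{ex})$ admits a strong filling $W_0$. Recall that $\xi_{ex}$ is obtained from the Bourgeois structure on $\SSS^{2n-1}\times\TT^2$ by a sequence of subcritical surgeries (of index $1$ and $2$) performed inside $V_+\times\SSS^1$, where $V_{\pm}$ denote the two pieces of the $\SSS^1$-invariant convex decomposition. I would first choose the connect sum point in $\SSS^{2n+1}$ to be disjoint from $V_-\times\SSS^1$, so that the blow-down cobordism $W_{cap}$ of \Cref{prop:cap_gen} can be attached along $V_-\times\SSS^1 \subset M\#\SSS^{2n+1}$ on the $\SSS^{2n+1}$-side of the connect sum. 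Stacking this cobordism on top of $W_0$ then yields a strong filling $W$ of $(M,\xi)\#(M'_+,\xi'_+)=M\#\partial W'_0$, where $W'_0$ is the Weinstein filling of $M'_+$ obtained from $V_+\times\DD^2$ by attaching the same surgery handles. For $n\ge 3$ this $W'_0$ is subcritical, and for $n=2$ it is flexible of dimension $6$ (all attached handles are subcritical in that dimension, hence automatically flexible); moreover $c_1(W'_0)=0$ (since $c_1(V_+)=0$ as $V_+$ is a product of cotangent bundles, and the surgeries preserve triviality of $c_1$ by the same verification as in \Cref{thm:tightnonfill_alge}), so the hypotheses of \Cref{thm:unique} are satisfied in both regimes (using $c_1(M)=0$ for $n=2$).

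Next, following \Cref{thm:tightnonfill_alge}, I would produce an obstructing class via \Cref{prop:cap_gen}. The goal is to select a nonzero class $\bar\beta\in H_n(V_+;\Q)$ whose image $\beta\in H_n(\partial W'_0;\Q)$ is nonzero: for $n\ge 3$ one has $\dim H_n(V_+;\Q)\ge 1$ by K\"unneth (using the page $D^*\SSS^{n-1}$ and the $D^*\SSS^1$ factor) and no $n$-surgery is performed, while for $n=2$ the choice of page $\Sigma$ ensures $\dim H_2(V_+;\Q)\ge 2$ whereas only a single index-$2$ surgery is performed. Since $V_+\hookrightarrow V_+\times\DD^2$ is a homotopy equivalence and only handles of index at most $n$ are attached to form $W'_0$ (which can augment but not kill classes in degree $n$), the composition $H_n(V_+;\Q)\to H_n(V_+\times\DD^2;\Q)\to H_n(W'_0;\Q)$ is injective. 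Thus $\beta$ has nonzero image in $H_n(W'_0;\Q)$, i.e.\ $\beta\notin\ker\bigl(H_n(\partial W'_0;\Q)\to H_n(W'_0;\Q)\bigr)$.

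On the other hand, \Cref{prop:cap_gen} guarantees that $V_+\subset M'_+$ can be isotoped through $W_{cap}$ into the negative boundary $\SSS^{2n+1}$. Arranging general position so that the image of this isotopy is disjoint from the connect sum ball, the isotoped copy of $V_+$ sits inside $\SSS^{2n+1}\setminus\{\text{ball}\}\simeq\DD^{2n+1}\subset M\#\SSS^{2n+1}\subset W$. Since this region is contractible and $0<n<2n+1$, any $n$-cycle supported there bounds an $(n+1)$-chain in $W$. Consequently the image of $\beta$ under the composition $H_n(\partial W'_0;\Q)\xrightarrow{0\oplus\Id}H_n(M\#\partial W'_0;\Q)\to H_n(W;\Q)$ vanishes, so $\beta$ does lie in this second kernel.

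Applying \Cref{thm:unique} would then force $\beta\in\ker\bigl(H_n(\partial W'_0;\Q)\to H_n(W'_0;\Q)\bigr)$, contradicting the conclusion of the second paragraph and completing the argument. The main technical points I expect to require careful verification are: first, the compatibility of the three geometric moves---connect sum with $M$, blow-down along $V_-\times\SSS^1$, and isotopy of $V_+$ into the negative end---which should be arrangeable as mutually disjoint by general position since the relevant loci have complementary dimensions; and second, the bookkeeping of $c_1$ through the surgery sequence, needed to invoke \Cref{thm:unique} in the subcritical ($n\ge 3$) and flexible six-dimensional ($n=2$) regimes.
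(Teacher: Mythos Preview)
Your approach is essentially the same as the paper's: stack the blow-down/surgery cobordism (done away from the connect-sum point) on a hypothetical filling of $(M,\xi)\#(\SSS^{2n+1},\xi_{ex})$, and then apply \Cref{thm:unique} with the flexibly fillable $M'_+$ and the class coming from $V_+$. The overall logic is correct.

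There is, however, an index-bookkeeping slip in the $n=2$ case. An index-$2$ contact surgery is along an $\SSS^2$, hence corresponds to a Weinstein $3$-handle on the $6$-dimensional $W'_0$; this handle is \emph{critical}, not subcritical. So your parenthetical ``all attached handles are subcritical in that dimension'' is false for $n=2$. The correct reason $W'_0$ is flexible is that this critical handle is attached along a \emph{loose} Legendrian (the $2$-surgery on the Bourgeois $5$-manifold was along a loose Legendrian sphere, and looseness is local, so persists when the sphere is viewed inside $OB(V_+,\Id)$). Similarly, your claim that ``only handles of index at most $n$ are attached to form $W'_0$'' fails for $n=2$, so injectivity of $H_2(V_+;\Q)\to H_2(W'_0;\Q)$ is not automatic: the single $3$-handle can kill one class. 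The fix is immediate: the kernel of this map is at most $1$-dimensional, while $\dim H_2(V_+;\Q)\ge 2$ by your choice of $\Sigma$, so one may choose $\bar\beta$ with nonzero image in $H_2(W'_0;\Q)$ (and then automatically nonzero image in $H_2(M'_+;\Q)$, since the map to $W'_0$ factors through $M'_+$). With these two corrections the argument for $n=2$ goes through exactly as in the paper.
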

\begin{proof}
	Note that from the proof of \Cref{thm:tightnonfill_alge} (Step 4), there exists a strong cobordism from $(\SSS^{2n+1},\xi_{ex})$ to a flexibly fillable contact manifold  $M'_+$ with vanishing first Chern class, such that there is a non-trivial homology class $\beta \in H_{n}(M'_+;\Q)$ annihilated in the cobordism but not in the flexible filling. 
	As the cobordism is obtained by blowing-down a Giroux domain and contact surgeries, which are operations performed away from a point, we get a strong cobordism from $(\SSS^{2n+1},\xi_{ex})\# M$ to $M'_+\# M$ and the homology class corresponding to $\beta$ is again annihilated in the cobordism. 
	We then apply \Cref{thm:unique} to obstruct fillability by stacking the blow-up cobordism on top of any hypothetical filling of $(\SSS^{2n+1},\xi_{ex})\# M$.
    (Note that the additional assumption of $c_1(M)=0$ in the case of $n=2$ is needed in order to apply \Cref{thm:unique}.(a) in this non-subcritical case.)
\end{proof}

\begin{proof}[Proof of \Cref{thm:tightnonfill_general}]
We can assume $(M,\xi)$ admits a strong filling, for otherwise, we have nothing to prove. 
In this case the connected sum $(M,\xi)\#(\SSS^{2n+1},\xi_{ex})$  is not strongly fillable by \Cref{thm:kill_strong}. 
We claim that algebraic tightness follows from the properties listed in \Cref{sec:alg_tight}.

Indeed, by \Cref{prop:property} (4), we know that $(M,\xi)$ is algebraically tight.
Then, by \Cref{prop:property} (2) and \Cref{thm:tightnonfill_alge}, the disjoint union $(M,\xi)\sqcup (\SSS^{2n+1},\xi_{ex})$ is algebraically tight as well. 
Since there is a Weinstein cobordism from  $(M,\xi)\sqcup (\SSS^{2n+1},\xi_{ex})$ to the connected sum $(M,\xi)\#(\SSS^{2n+1},\xi_{ex})$ by a Weinstein $1$-handle,   $(M,\xi)\#(\SSS^{2n+1},\xi_{ex})$ is algebraically tight by \Cref{prop:property} (3). 
\end{proof}

\subsection{Abundance of exotic structures on spheres}
Let $(\SSS^{2n+1},\xi_{ex})$ denote a tight non-fillable contact structure constructed in \Cref{thm:tightnonfill_alge}. This was obtained (for dimension at least $7$) via subcritical surgery on the Bourgeois manifold $BO(D^*\SSS^{n-1},\tau)$, which, as originally observed by Lisi, has an associated \emph{spinal open book decomposition}; c.f.\ \cite[Section 2]{BGM} for details. 
Namely, we have a decomposition
$$
BO(D^*\SSS^{n-1},\tau)=S^*\SSS^{n-1}\times D^*\mathbb
T^2\bigcup_{S^*\SSS^{n-1}\times \TT^3} \mbox{Map}(D^*\SSS^{n-1},\tau)\times \mathbb T^2,
$$
where $\mbox{Map}(D^*\SSS^{n-1},\tau)$ denotes the mapping torus of the Dehn-Seidel twist $\tau: D^*\SSS^{n-1} \to D^*\SSS^{n-1}$.

\begin{lemma}\label{lemma:sphere_ADC}
	When $n\ge 3$, the contact manifold $(\SSS^{2n+1},\xi_{ex})$ is $(n-3)$-ADC.
\end{lemma}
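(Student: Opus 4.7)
My plan is to deduce the $(n-3)$-ADC property of $(\SSS^{2n+1},\xi_{ex})$ in two steps: first, establish a sufficient lower bound on the ADC level of the Bourgeois manifold from which $\xi_{ex}$ was constructed; second, invoke the fact that the surgeries used in the construction preserve this level.

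Recall that in the proof of \Cref{thm:tightnonfill_alge}, the contact sphere $(\SSS^{2n+1},\xi_{ex})$ is obtained from $BO(D^*\SSS^{n-1},\tau)$ by two index-$1$ and one index-$2$ Weinstein handle attachments. Since the critical Weinstein index in dimension $2n+2$ is $n+1\geq 4$, all three handles are strictly subcritical and in particular flexible. By Lazarev's preservation theorem \cite{Laz20}, recalled in \Cref{subsec_ADC}, the ADC level is preserved under such surgeries, so it suffices to show that $BO(D^*\SSS^{n-1},\tau)$ is itself at least $(n-3)$-ADC.

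For $n\geq 4$ the binding $B=\Sigma_{n-1}(2,\dots,2)$ is simply connected, and \Cref{ex:brieskorn}, applied with Brieskorn dimension parameter $n-1$, gives that every contractible Reeb orbit on $B$ has SFT degree at least $2(n-1)-4=2n-6$; equivalently $B$ is $(2n-7)$-ADC. Applying \Cref{lem:kADC_stable} then shows that the Bourgeois manifold is $(2n-5)$-ADC, which exceeds $(n-3)$ as soon as $n\geq 2$. In the boundary case $n=3$, the binding is $3$-dimensional with torsion first homology, but \Cref{ex:brieskorn_3d} combined with \Cref{rmk:torsion} tells us it is $-1$-ADC when one uses rational Conley--Zehnder indices for \emph{all} Reeb orbits. \Cref{lem:kADC_stable} still applies via the orbit-wise variant of ADC described in \Cref{rmk:torsion}, yielding that the Bourgeois manifold is $1$-ADC, which again exceeds the required $(n-3)=0$.

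The step I expect to need the most care is the $n=3$ case, where Lazarev's subcritical surgery preservation must be applied in the orbit-wise sense with rational Conley--Zehnder indices rather than the strictly contractible version. However, the arguments of \cite{Laz20} go through unchanged in this setting, since the Reeb orbits created inside subcritical handles lie in an arbitrarily small action window and have CZ index forced up by the subcritical geometry, so no new low-degree contractible (or rationally nullhomologous) orbits are introduced. Combining these two ingredients yields the claim.
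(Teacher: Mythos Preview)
There is a genuine gap in your argument. The $k$-ADC condition concerns only \emph{contractible} Reeb orbits, and \Cref{lem:kADC_stable} (together with \Cref{rmk:torsion}) controls only those orbits of the Bourgeois manifold that lie near the binding $B\times\{r=0\}\times\TT^2$ and are contractible in $BO(D^*\SSS^{n-1},\tau)$. However, the surgeries used to reach $\SSS^{2n+1}$ are precisely designed to kill $\pi_1(\TT^2)=\ZZ^2$; as a consequence, every Reeb orbit of the Bourgeois form that winds around the $\TT^2$ factor---and by \Cref{lemma:Reeb_vf} these are exactly the orbits in the mapping-torus region $\mathrm{Map}(D^*\SSS^{n-1},\tau)\times\TT^2$ and in the interface $S^*\SSS^{n-1}\times\TT^3$---becomes contractible in $(\SSS^{2n+1},\xi_{ex})$. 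Your invocation of Lazarev's preservation theorem does not cover these orbits: Lazarev's argument bounds the indices of the new orbits created inside the handle and shows that the old contractible orbits retain their indices, but it says nothing about previously non-contractible orbits that are made contractible by attaching a $2$-handle along a loop generating $\pi_1$. Your closing paragraph explicitly identifies ``orbits created inside subcritical handles'' as the only issue, which is not the case.

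This is exactly why the paper's proof proceeds differently: it fixes a global trivialisation of the almost contact structure, and then computes Conley--Zehnder indices for \emph{all} Reeb orbits (not merely the contractible ones) region by region---over the binding, over the mapping torus, along the interface, and inside the handles---together with a careful framing choice so that the trivialisation extends over the surgery cores. The minimum one finds comes from the mapping-torus and interface regions, where $\mu_{CZ}\geq 0$ (respectively $\geq n-2$), giving SFT degree at least $n-2$ and hence only the weaker $(n-3)$-ADC conclusion rather than your $(2n-5)$. To repair your approach you would need a ``generalised ADC'' statement for $BO(D^*\SSS^{n-1},\tau)$ with respect to a trivialisation that extends over the handles---which is essentially the content of the paper's computation.
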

\begin{proof}
	\Cref{lem:kADC_stable} gives the indices of contractible Reeb orbits on the Bourgeois manifold $BO(D^*\SSS^{n-1},\tau)$. 
	On the other hand, the subcritical surgeries kill part of the topology of $BO(D^*\SSS^{n-1},\tau)$, making non-contractible orbits of $BO(D^*\SSS^{n-1},\tau)$ contractible in $(\SSS^{2n+1},\xi_{ex})$.
	We then need to compute the Conley-Zehnder indices of them as well.
	
	For simplicity we pick a global trivialisation of the almost contact structure underlying the Bourgeois contact structure, which is split as an almost contact structure according to \Cref{rem:almost_cont}, by simply taking a trivialization of $(\SSS^{2n-1},\xi_{st})$ and trivializing the tangent bundles of the $\TT^2$-fibers in an equivariant manner. 
	The Reeb orbits on $BO(D^*\SSS^{n-1},\tau)$ then have Conley-Zehnder indices as follows: 
	\begin{enumerate}
		\item On $B\times D^*\TT^2$, where $B=S^*\SSS^{n-1}$, the contact form is Morse-Bott. The computation of the Conley-Zehnder indices is in \Cref{lem:kADC_stable}. Namely, we have
		$$
		\mu_{LCZ}(\gamma_q)=\mu_{LCZ}(\gamma_B). 
		$$
		Here, $\gamma_q=\gamma_B\times \{(r=0,q)\}$ is the contractible orbit on the binding. And $\gamma_B$ is the Reeb orbits on $B$, which comes in a Morse-Bott family.
		From the computations in \cite[p.\ 37]{KvK16} for the $A_1$-singularity
		, we obtain that $\mu_{LCZ}(\gamma_B)\geq n-2$, and thus $$\mu_{LCZ}(\gamma_q)\ge n-2,$$ for every orbit in this region.
		
		\item On the product $\mbox{Map}(D^*\SSS^{n-1},\tau)\times \TT^2$, we have the following situation.
        Recall from \Cref{thm:bourgeois}, that the Bourgeois form $\alpha_{BO}$ in this region simply can be written as $\pi^*\alpha_{T} + \lambda_{fib}$ on the total space of the natural projection $\pi\colon \mathrm{Map}(D^*\SSS^{n-1},\tau)\times \TT^2\to \SSS^1\times \TT^2=\TT^3$, where $\alpha_T=Cd\theta + \cos\theta \, d x - \sin\theta\, d y$ on $\TT^3\ni (\theta,x,y)$ and $\lambda_{fib}$ is a $1$-form that is Liouville on each fiber of $\pi$.
        Moreover, it follows from Equation \eqref{eqn:reeb_vf} that in this region the Reeb vector field $R$ is just a lift via $d\pi$ of the Reeb vector field $R_T$ of $\alpha_T$ on $\TT^3$.
        More precisely, if $(p,x,y)\in \mathrm{Map}(D^*\SSS^{n-1},\tau)\times\TT^2$ lies in the fiber over $(\theta,x,y)\in \TT^3$, the Reeb orbit through it is tangent to $\{p\}\times \TT^2$ and is a geodesic for the flat metric, that has starting vector at angle (w.r.t.\ the coordinates $(x,y)$) exactly $\theta$.
        In other words, Reeb orbits are naturally organized in $D^*\SSS^{n-1}\times \mathbb S^1$-Morse--Bott families, each of which projects down via $\pi$ to a $\SSS^1$-Morse--Bott family of orbits of $R_T$ on $\TT^3$. 
        
        Now, perturbing with a Morse function on $\SSS^1$ with critical points $p_1,p_2$, one can find another contact form $\alpha_T'$ that is $C^\infty$-close to $\alpha_T$ on $\TT^3$ such that each $\SSS^1$-Morse--Bott family of Reeb orbits corresponds to a pair of Reeb orbits. 
        We denote by $\gamma_{p_j}$ the Reeb orbit on $\mathbb T^3$ corresponding to $p_j$.
        By standard properties of the  Conley-Zehnder index, the minimal Conley-Zehnder index of the orbits on $\mathbb T^3$ after such perturbation is zero.
		
        We can then look at the perturbation $\alpha_{BO}'$ given by $\pi^*\alpha_T'+\lambda_{fib}$.
        Likewise, we can further locally perturb the contact form by a Morse function with a unique local maximum on $D^*\mathbb S^{n-1}$ and two critical points $q_1,q_2$ lying in the zero section, respectively of index $2n-2$ and $n-1$. 
        We denote by $\gamma_{q_i,p_j}$ the non-degenerate orbit corresponding to the pair $(q_i,p_j)$.
		Again, by standard properties of the Conley-Zehnder index, we then have:
		\begin{equation}\nonumber
		\begin{split}
		\mu_{CZ}(\gamma_{q_i,p_j})=&\;\mbox{ind}(q_i)-\frac{1}{2}\dim (D^*\mathbb S^{n-1})+\mu_{CZ}(\gamma_{p_j})\\
		=&\; \mbox{ind}(q_i)-n+1+\mu_{CZ}(\gamma_{p_j})\\
		\geq&\; n-1 - n+1+0=0.
		\end{split}
		\end{equation}
		Therefore, the Conley-Zehnder index of every orbit is non-negative along this region $\mathrm{Map}(D^*\SSS^{n-1},\tau)\times \TT^2$, and their SFT degrees are at least $n-2$.

		\item Along the interface region $S^*\SSS^{n-1}\times \TT^3$, the situation is modeled as a smoothened corner of the product $D^*\SSS^{n-1}\times D^*\TT^2$.
		Computations in this setting are carried out in detail in \cite[Proposition 6.18 (3)]{Zho21}, and the result is that
		the minimal Conley-Zehnder index is the sum of minimal Conley-Zehnder indices on $S^*\SSS^{n-1}$ (i.e.\ $n-2$) and $S^*\TT^2$ (i.e.\ $0$). 
        (Note that \cite{Zho21} considered the $S^1$-family of Reeb orbits, whose generalized Conley-Zehnder index has an extra $1/2$. 
        On the other hand, here we consider non-degenerate Reeb orbits after a small perturbation of the Morse-Bott family, hence the minimal Conley-Zehnder index adds.)
		Hence, we have $\mu_{CZ}(\gamma)\ge n-2$ for every orbit $\gamma$ in this region. 
		
		\item\label{4} The attaching of two $2$-handles and a $3$-handles
        create contractible new Reeb orbits, but these have Conley-Zehnder indices at least $n-1$ asymptotically, by the proof of \cite[Theorem 3.14]{Laz20} (or \cite[Proposition 2.3]{zbMATH07706508} for more precise numbers). 
		When we attach the two $2$-handles to kill the fundamental group of $BO(D^*\SSS^{n-1},\tau)$.
        Here, to maintain the correctness of Conley-Zehnder indices computed above, we need to choose the framing of the symplectic normal bundle of the isotropic circles so that the induced trivialization of the contact structure (used in cases 2 and 3 above) along the isotropic circle extends to a trivialization of the core disk in the handle. 
		In fact, since the $\TT^2$-fibers over the binding of the open book in the base are isotropic, these give natural choices of isotropic circles for surgery (of any slope). 
		One then simply takes the framing of the normal bundle induced from the trivialisation of the symplectic normal bundle induced from the projection to the base.
	\end{enumerate}
    In total, we have shown that the Bourgeois contact form considered in \S \ref{sec:Bourgeois} is $(n-3)$-index positive even including all the non-contractible Reeb orbits with a suitable framing (called $\pi_1$-sensitive ADC in \S \ref{sec:exact_non-Weinstein}). In particular, it is $(n-3)$-ADC after a small perturbation as in \cite[Lemma 2.3]{Bothesis}. The framing on the Bourgeois contact structure is compatible with framing on $\mathbb{S}^{2n+1}$ after the surgery as explained in point \eqref{4}, the subcritical surgery will preserve the $(n-3)$-ADC property by \cite[Theorems 3.14]{Laz20}\footnote{As the proof of \cite[Theorem 3.14]{Laz20} shows that index $1$ or $2$ subcritical handle to a $(2n+1)$-dimensional contact manifold adds Reeb orbits of Conley-Zehnder index at least $n$ asymptotically. It would preserve $p$-ADC as long as $p<2n-2$.}, or \Cref{prop:pi_1ADC} below. 
\end{proof}

As proved in \cite{Laz20}, the ADC condition is sufficient to obtain well-defined invariants of Liouville-fillable contact manifolds using positive symplectic cohomology (generated by contractible orbits) of any of its \emph{topologically simple} Liouville fillings (that is, Liouville fillings with $c_1=0$ and so that the inclusion of the boundary induces an injective map on $\pi_1$).
Under a slightly stronger assumption, namely that there is a non-degenerate Reeb vector field all of whose contractible orbits have SFT degree at least $2$,  Cieliebak-Oancea \cite[Section 9.5]{CieOan18} described a version of symplectic homology on the trivial symplectic cobordism $M\times [0,1]$ associated to the contact manifold $M$. 
The latter construction yields a contact invariant in view of a neck-stretching argument.
In fact, for exactly the same reason as in the fillable case \cite{Laz20}, it is enough to have $1$-ADC for this to be a well-defined invariant of the contact manifold itself. With a slight abuse of notation, we will then denote the positive symplectic cohomology of a $1$-ADC contact manifold $(M,\xi)$ simply by $SH_+^*(M)$ (i.e.\ $SH^{>0}_*(M)$ in \cite[Section 9.5]{CieOan18}). When $M$ is the contact boundary of a topologically simple Liouville filling $W$, we have  $SH_+^*(M)=SH_+^*(W)$.

\begin{proposition}\label{prop:brieskorn}
When $n\ge 5$, there exists a Weinstein fillable contact structure  $(\SSS^{2n+1},\xi')$ homotopic to the standard almost contact structure, which is $1$-ADC, and such that $0<\dim SH^{2n-3}_+(\SSS^{2n+1},\xi')<\infty$.
\end{proposition}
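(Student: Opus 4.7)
The plan is to realize $\SSS^{2n+1}$ as a Brieskorn sphere $\Sigma=\Sigma_{n+1}(p,2,\ldots,2)$, i.e.\ the link of the $A_{p-1}$-singularity $z_0^p+z_1^2+\cdots+z_{n+1}^2=0$ at the origin of $\CC^{n+2}$, for an odd exponent $p$ chosen so that $\Sigma$ is diffeomorphic to the standard $\SSS^{2n+1}$. The existence of such a $p$ for every $n\ge 5$ follows from Brieskorn's classification of the smooth type of Brieskorn--Pham links, combined with the Kervaire--Milnor computation of $\Theta_{2n+1}$. Equipped with its natural contact structure, $\Sigma$ is Weinstein fillable by the Milnor fiber $V\subset \CC^{n+2}$, which is a Stein domain with $c_1(V)=0$. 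The induced almost contact structure on $\Sigma\cong \SSS^{2n+1}$ thus has vanishing first Chern class, and is therefore homotopic to the standard one by a standard obstruction-theoretic argument for spheres.

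Next, the $1$-ADC property is essentially contained in \Cref{ex:brieskorn}: by the explicit Conley--Zehnder index computations of Ustilovsky \cite{U99} and van Koert \cite{vKo08}, all non-constant Reeb orbits of the natural Brieskorn contact form on $\Sigma_{n+1}(p,2,\ldots,2)$ have SFT degree at least $2(n+1)-4=2n-2\ge 2$ for $n\ge 2$, so the form is in fact index-positive and $1$-ADC follows immediately from \Cref{prop:property}\,(5.b). In particular $SH^{\ast}_+(\Sigma)$ is a well-defined invariant of $(\Sigma,\xi')$ and coincides with the positive symplectic cohomology of the Weinstein filling $V$.

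The heart of the proof is the computation of $SH^{2n-3}_+(V)$ via the Morse--Bott description of the Reeb orbits of the natural contact form on Brieskorn manifolds. The Reeb orbits organize into $S^1$-Morse--Bott families parametrized by the iteration number $k\ge 1$; after a standard Morse--Bott perturbation, each family contributes a pair of non-degenerate generators whose Conley--Zehnder indices are given by the closed Ustilovsky--van Koert formula in terms of $p$, $n$ and $k$. Finiteness of $\dim SH^{2n-3}_+$ follows from the linear growth of these indices with $k$, ensuring that only finitely many Morse--Bott families contribute to any given SH-degree. For the non-vanishing, one chooses $p$ large enough so that the explicit index formula places at least one iterate at the Conley--Zehnder index corresponding to SH-degree $2n-3$, and then verifies, by an action-plus-index filtration argument, that the resulting generator is neither a coboundary nor killed as part of a longer Morse--Bott cancellation.

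The main obstacle will be this last combinatorial check: one must track the Ustilovsky--van Koert indices of all iterates that can a priori interact with the chosen one, and use both the action filtration and the index-positivity lower bound (SFT degree $\ge 2n-2$) to rule out all potential cancellations. The hypothesis $n\ge 5$ enters precisely here, since it guarantees enough room between the target degree $2n-3$ and the minimal SFT degree of competing orbits for a surviving generator to exist, while keeping degree $2n-3$ within the range covered by the explicit formulas of \cite{U99,vKo08}.
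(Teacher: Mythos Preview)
Your approach has a fatal grading error. With the paper's conventions, a Reeb orbit $\gamma$ on a $(2n+1)$-dimensional contact boundary contributes to $SH_+^{*}$ in degree $(n+1)-\mu_{CZ}(\gamma)$, so hitting degree $2n-3$ requires $\mu_{CZ}(\gamma)=4-n$, which is \emph{negative} for $n\ge 5$. But for the $A_{p-1}$-link $\Sigma_{n+1}(p,2,\ldots,2)$ you correctly quote from \Cref{ex:brieskorn} that the SFT degrees are at least $2(n+1)-4=2n-2$, i.e.\ $\mu_{CZ}\ge n$. Hence every generator lives in $SH_+^{*}$-degree at most $1$, and $SH_+^{2n-3}(\Sigma_{n+1}(p,2,\ldots,2))=0$ for all $n\ge 3$ and all $p$. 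No choice of $p$ can produce an orbit in the required degree; your final paragraph conflates SFT degree with SH-degree, and the ``enough room'' heuristic goes in the wrong direction. This is precisely why the paper instead uses the much more delicate Brieskorn manifold $\Sigma(n,\ldots,n,n+1,p)$ with $p\gg n$, whose minimal Conley--Zehnder index is exactly $4-n$ (so the SFT degree is exactly $2$, just barely $1$-ADC), and then takes iterated self-connected sums to reach the standard smooth sphere.

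A secondary gap: on $\SSS^{2n+1}$ one has $H^2=0$, so $c_1$ vanishes automatically and carries no information about the almost contact class; the latter is governed by $\pi_{2n}(U(n))$, which is nontrivial in general. Your ``standard obstruction-theoretic argument'' does not show the Brieskorn contact structure is homotopically standard. The paper handles this by a further connected sum with a flexibly fillable sphere in the inverse almost contact class, checking separately that this summand contributes nothing to $SH_+^{2n-3}$ when $n\ge 5$.
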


The construction of $\xi'$ in \Cref{prop:brieskorn} is based on the description given in \cite[\S 5.3.1]{KvK16} of the Reeb orbits of an explicit contact form on the Brieskorn manifold $\Sigma(a_0,\ldots,a_n):= \{z_0^{a_0}+\ldots+z_n^{a_n}=1\}\cap S^{2n+1}$. 
We first set up some terminology that we will need later on in the description of the Reeb flow from \cite[\S 5.3.1]{KvK16}.

A positive real number $T$ is called a \textbf{principal period} if there exists $I_T\subset I_n=\{0,\ldots,n\}$ such that:

\begin{enumerate}
    \item $a_i|T$ if and only if $i\in I_T$; 
\item $T=\lcm \{a_i\mid i \in I_T\}$; 
\item $|I_T|\ge 2$.
\end{enumerate}
Then, according to \cite[\S 5.3.1]{KvK16}, the periods of the Reeb flow are of the form $NT$, for some $N\in \NN_+$ and a principal period $T$. 
For a period $A$, there is a principal period $T$ such that $A=NT$ and there is no other principal period $T'$ that divides $NT$ and is divided by $T$. 
There is then a family of dimension $2|I_T|-3$ (diffeomorphic to the Brieskorn manifold with coefficients given by the sequence $I_T$) of parametrized Reeb orbits with period $NT$, which, after taking the quotient by the action given by $S^1$-reparameterization becomes $2|I_T|-4$ dimensional.
The generalized Conley-Zehnder index of such a family is given by 
\begin{equation*}
2\sum_{i\in I_T}\frac{NT}{a_i}+2\sum_{i\notin I_T}\left\lfloor\frac{NT}{a_i}\right\rfloor+|I_n|-|I_T|-2NT.
\end{equation*}
If we perturb the contact form into a non-degenerate one using a Morse function on the critical manifold following \cite[Lemma 2.3]{Bothesis}, the minimal Conley-Zehnder index of the non-degenerate orbits coming from this family is given by the generalized Conley-Zehnder index minus half of the dimension of the family itself, i.e.\
\begin{equation}\label{eqn:CZ}
2\sum_{i\in I_T}\frac{NT}{a_i}+2\sum_{i\notin I_T}\left\lfloor\frac{NT}{a_i}\right\rfloor+|I_n|-2|I_T|-2NT+2.
\end{equation}

Now, for each $A$, there might be several such principal periods $T$. 
Each will give rise to a Morse-Bott non-degenerate family of Reeb orbits, and they describe all of the Reeb orbits.

\begin{proof}[Proof of \Cref{prop:brieskorn}]
	We consider the Brieskorn sphere $Y:=\Sigma(n,\ldots,n, n+1, p)$, where $p$ is a prime number with $p\gg n$. 
	The minimal  Conley-Zehnder index of a small perturbation of the standard contact form on  $Y$ is $4-n$, obtained from the expression in \eqref{eqn:CZ} above as $|I_{n+1}|=n+2$, $|I_T|=n$, $N=1$, $T=n$ in our setup. 
	The SFT degrees are then at least $2$. The minimal Conley-Zehnder indices of other Morse-Bott families of Reeb orbits of the natural contact form (after suitable perturbation) are $6-n$ (given by $N=2$, $T=n$ in \eqref{eqn:CZ}) as $p\gg n$, and so they have SFT degree $4$. 
	As a consequence, we have that $Y$ is $1$-ADC. 
	Moreover, by the Morse-Bott spectral sequence (see e.g.\ \cite[Theorem 5.4.]{KvK16})\footnote{Note that \cite{KvK16} uses homological grading by $\mu_{CZ}$ while we use cohomological grading $n+1-\mu_{CZ}$} and the index gap above, we deduce that 
    \[
    \dim SH_+^{(n+1)-(4-n)}(Y)=\dim SH_+^{2n-3}(Y)=1 \, ,
    \]
    which is generated by the family of minimal  Conley-Zehnder index. 
	Now, by \cite[Proposition 3.6]{KvK16}, $Y$ is homeomorphic to a sphere. 
	Then there exists $K$, such that the $K$-th iterated self-connected sum $\#^K Y$ is {\em diffeomorphic} to the standard sphere, as the group of smooth structures on $S^k$ is finite for $k\ne 4$, see \cite{zbMATH03188186}. 
	Since $\dim Y \ge 11$, the contact sum preserves the $1$-ADC property: indeed, by the proof of \cite[Theorem 3.14, 3.15]{Laz20}, an index $k$ subcritical handle attachment to a $2n-1$ dimensional contact manifold preserves the $p$-ADC property for any $p\le 2n-3-k$. 
    Moreover, by \cite[Theorem 7.1 and Proposition 9.19]{CieOan18} we have 
    \[
    \dim SH_+^{2n-3}(\#^K_{i=1} Y)=\dim (\bigoplus^K_{i=1} SH^{2n-3}_+(Y))=K \, .
    \]
	To obtain the standard almost contact structure, we choose $Y'$ to be the flexibly fillable sphere with the opposite homotopy class \footnote{To be precise, here ``opposite'' is to be interpreted as the inverse w.r.t.\ the natural group structure on the space of almost contact structures on the sphere given by the connected sum operation.} of almost contact structures as that of $\#^K Y$, as in \cite[Corollary 5.19]{zbMATH07567794}. 
	Then $Y'\#^K Y$ is a standard sphere with the standard almost contact structure, and it is $1$-ADC.
	Moreover, $SH^{2n-3}_+(Y')=0$ when $n\ge 5$. 
	Hence, we may take $Y'\#^K Y$ as the desired contact sphere.
\end{proof}

We now have all the needed ingredients to prove  \Cref{thm:tightnonfill_general_infinite}.

\begin{proof}[Proof of \Cref{thm:tightnonfill_general_infinite}]
	One first realizes the given almost contact structure as the contact boundary $(M,\xi_{\textit{flex}})$ of a flexible Weinstein domain using Eliashberg's $h$-principle \cite{CieEliBook}. 
	For $i\in \NN$, we then consider the iterated connected sums 
    \[
    Y_i:=(\SSS^{2n+1},\xi_{ex}) \# (M,\xi_{\textit{flex}})\#^i(\SSS^{2n+1},\xi')
    \]
    with the contact sphere $(\SSS^{2n+1},\xi_{ex})$ from \Cref{thm:tightnonfill_alge} and $(\SSS^{2n+1},\xi')$ from \Cref{prop:brieskorn}. 
	Notice that $M$ is $1$-ADC, as it is flexibly fillable \cite{Laz20}, and that $SH^*_+(M)$ is supported in degree smaller than $n+1$, in view of the vanishing of symplectic homology for flexible Weinstein domains.
	Since $SH_+^*(\SSS^{2n+1},\xi_{ex})$ is supported in degrees at most $n+1$ by \Cref{lemma:sphere_ADC}, then if $2n-3>n+1$, i.e.\ $n\ge 5$, we have $\dim SH^{2n-3}_+(Y_i)=N\cdot i$, where $N=\dim SH^{2n-3}_+(\SSS^{2n+1},\xi')$,  by \cite[Theorem 7.1, Proposition 9.19]{CieOan18}. 
	As a consequence, the $Y_i$'s are pairwise distinguished by $SH^*_+$. 
    Moreover, they are tight and not strongly fillable by the same arguments as in the proof of \Cref{thm:tightnonfill_general}.
\end{proof}

\begin{remark}
When $n=3$, $(\SSS^{2n+1},\xi_{ex})$ is not $1$-ADC, which costs us the well-definedness of the positive symplectic cohomology. 
When $n=4$, it is not easy to find a degree such that positive symplectic cohomology is different for varying summands in the connected sum.
In those cases, we can use positive symplectic cohomology for augmentations in \cite[\S 3.2.3]{zhou2023contact}\footnote{The positive symplectic cohomology of $1$-ADC manifolds is the special case for the trivial augmentation.}, which was motivated by the work of Bourgeois-Oancea \cite{MR2471597}. 
Then we can obtain a ``contact invariant" by enumerating through all possible augmentations.
However, the current status of the contact homology \cite{pardon2019contact,MR4539062} is not enough to guarantee that such a contact invariant is independent of various choices. 
This being said, \emph{assuming the invariance of such a theory}, one can upgrade \Cref{thm:tightnonfill_general_infinite} to cover the cases of $n=3,4$. 
\end{remark}

\section{Liouville fillable contact spheres without Weinstein fillings}\label{sec:exact_non-Weinstein}
We now explain how to use surgery techniques to deduce the existence of Liouville fillable contact structures that are not Weinstein fillable on spheres. This requires a refinement of the surgery arguments used above, and we begin with the following:

\begin{lemma}\label{lem:surg_parallel}
Let $(M^{2n+1},\xi)$ be a contact manifold of dimension at least $5$ whose underlying almost contact structure is stably trivial (as an almost contact structure). Then there is an almost Weinstein cobordism $W$ from $(M,\xi)$ to a homotopy sphere $\Sigma$ (with some almost contact structure).
\end{lemma}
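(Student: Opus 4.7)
The plan is to produce $W$ as an iterated handle attachment on $M\times[0,1]$ that kills the homotopy groups $\pi_1(M),\dots,\pi_n(M)$, extending the almost complex structure across each new handle. Once the top boundary of the bordism is $n$-connected, Poincaré duality and the Hurewicz--Whitehead theorem will force it to be a homotopy sphere.

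First I would endow $W_0:=M\times[0,1]$ with the almost complex structure whose restriction to $M$ reads $TW_0|_M=\xi\oplus\mathbb{C}$, with the $\mathbb{C}$-factor spanned by the Reeb direction of a chosen contact form on $M$ together with the outward $[0,1]$-direction; by hypothesis $\xi$, and hence $TW_0$, is stably trivial as a complex bundle. Then I would proceed inductively: at step $i$, assume we have built an almost complex bordism $W_{i-1}$ from $M$ to a closed $(i-1)$-connected $(2n+1)$-manifold $M_{i-1}$, with $TW_{i-1}$ stably trivial as a complex bundle. Pick embedded representatives $f_j\colon S^i\hookrightarrow M_{i-1}$ of a finite generating set of $\pi_i(M_{i-1})$, which exist by general position since $2i\leq 2n<2n+1$. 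The normal bundles $\nu(f_j)$, of real rank $2n+1-i$, are stably trivial since $M_{i-1}$ inherits stable parallelism, and are genuinely trivial because $2n+1-i>i$. Similarly, $TW_{i-1}|_{f_j(S^i)}$ is a stably trivial complex bundle of complex rank $n+1>i$, hence trivial as a complex bundle. Choosing a real framing of $\nu(f_j)$ induced by such a complex trivialization makes the $(i+1)$-handle attachment almost complex: the handle is contractible, so the prescribed complex structure extends across it essentially uniquely, and stable triviality is preserved.

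After performing these surgeries for $i=1,\dots,n$ (and a finite further round of $n$-surgeries to absorb the belt-sphere classes that appear in $\pi_n$, by standard odd-dimensional surgery theory), the top boundary $\Sigma:=M_n$ is a closed, orientable, $n$-connected manifold of dimension $2n+1$. Poincaré duality gives $H_k(\Sigma;\mathbb{Z})=0$ for $0<k<2n+1$, so $\Sigma$ is an integral homology sphere, and the Hurewicz--Whitehead theorem upgrades this to a homotopy sphere. Setting $W:=W_n$ produces the desired almost complex bordism, whose almost complex structure restricts on $\Sigma$ to an almost contact structure there.

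The delicate step, which I expect to be the main obstacle, is the matching of real and complex framings at each surgery, i.e.\ choosing the smooth framing of the attaching sphere so that it is induced by a complex trivialization of $TW_{i-1}|_{f_j(S^i)}$, and hence the almost complex structure extends rather than merely the smooth one. This rests on the stable triviality of the almost contact structure, propagated through each $W_{i-1}$, together with the stable range inequality $i\leq n$, which upgrades stably trivial bundles of the relevant ranks over $S^i$ to genuinely trivial ones.
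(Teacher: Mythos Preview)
Your argument is correct and reaches the same conclusion, but the route differs from the paper's in an interesting way.

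The paper separates the two tasks. It first performs the handle attachments purely in the smooth category, keeping track only of the stable triviality of the \emph{real} tangent bundle (this is the content of the cited result of Kreck). Having built the smooth cobordism $W$ with $TW$ stably trivial, it then equips a stabilisation of $TW$ with a complex structure after the fact, and finally invokes a separate destabilisation lemma from \cite{BCS1} to pass from a stable almost complex structure to a genuine one that restricts (up to homotopy) to the given almost contact structure on $M$.

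Your approach instead carries a genuine almost complex structure across each handle as it is attached, using that $TW_{i-1}|_{S^i}$ is a stably trivial complex bundle of rank $n+1>i$ and hence already trivial, so that a framing compatible with a complex trivialisation exists and the constant complex structure on the contractible handle glues correctly.

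The trade-off: your argument is more self-contained and avoids the external destabilisation step, but the sentence ``choosing a real framing of $\nu(f_j)$ induced by such a complex trivialisation makes the $(i{+}1)$-handle attachment almost complex'' is doing real work and deserves to be unpacked---one must check that the identification $TS^i\oplus\nu\oplus\underline{\RR}\cong\underline{\CC}^{n+1}$ really does determine a framing of $\nu$ for which the gluing map is $\CC$-linear on the overlap. The paper's approach trades this framing bookkeeping for a black-box lemma. Both are legitimate; the paper's is somewhat more modular, yours somewhat more elementary.
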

\begin{proof}
This follows immediately from results in \cite{BCS1}, building on the work of Kreck \cite{Kreck}. 
For the reader's convenience, we describe the steps in the proof. 

First, since the contact structure is stably parallelizable, it follows that the tangent bundle of $M$ is stably trivial (forgetting the complex structure) so that the classifying map of its tangent bundle factors through a point: 
\[
\xymatrix{
 & pt \ar[d] \\
M \ar[ur]^{\overline{\mathcal{T}}_M} \ar[r]_{\mathcal{T}_M} & BO.
} \]

By surgery as in \cite[Lemma 2]{Kreck}, we can add handles to $M \times I$ to inductively kill homotopy groups up to dimension $n$; let $W$ be the corresponding cobordism. 
Notice that the positive boundary of $W$ is a homotopy sphere by construction.

We now claim that the classifying map of the stable tangent bundle of $W$ is constant as well. 
This can be argued as follows.
First, the stable triviality of the tangent bundle of $M$ ensures that the normal bundles of the embedded spheres on which we do surgery are again stably trivial. 
Because the codimensions of the attaching spheres are here $n+1 \geq 3$, this is moreover equivalent to actual triviality. 
Lastly, one can choose framings in such a way that the stable trivialisation of the tangent bundle extends over the handles to all of $W$ and the claim then follows.

Thus, since $TW$ is stably trivial as a real bundle, we can endow a stabilization of it with an almost complex structure, i.e.\ we found a stable almost complex structure on $W$.
The conclusion of the lemma then directly follows from the fact that, according to \cite[Lemma 2.16]{BCS1}, one can destabilize the stable almost complex structure on $W$ that we just found to a genuine almost complex structure which agrees (up to homotopy) with the given one on the negative end. Since this cobordism was obtained by attaching handles of index at most $n$ it is, in particular, almost Weinstein.
\end{proof}

As we have already seen in \Cref{lemma:sphere_ADC}, it is important to keep track of the Conley-Zehnder indices for non-contractible orbits to understand the indices of the Reeb orbits that become contractible on the spheres obtained via the flexible surgeries. 
For this, assume the contact manifold $Y$ has vanishing rational first Chern class.  
If we fix a complex trivialisation $\tau$ of the contact structure (or of the sum of the right number of copies of the contact structure, if the first Chern class is a non-zero torsion) over the $2$-skeleton, or equivalently a trivialisation of the complex determinant bundle associated to the contact structure, then we can speak of the Conley-Zehnder indices for any orbits with respect to $\tau$, which we will denote by $\mu_{CZ}^\tau(\gamma)$. 
One can then define notions of index positivity and ADC that take into account all periodic orbits and not just those that are contractible. This leads to the following concept of $\pi_1$-sensitive ADC manifolds introduced in \cite[Definition 3.5]{intersection}.

\begin{definition}\label{def:ADC-pi_1}
Let $(Y,\xi)$ be a contact manifold such that $c_1^{\Q}(\xi)=0$. Let $\tau$ be a trivialization of $\det_{\CC}\left(\oplus^N\xi\right)$ for some $N\in \NN_+$. We say $(Y,\xi,\tau)$ is \emph{$\pi_1$-sensitive ADC} if there exist contact forms $\alpha_1>\alpha_2>\ldots$, and positive real numbers $D_1<D_2<\ldots$ converging to infinity, such that \emph{all} Reeb orbits of $\alpha_i$ of period up to $D_i$ are non-degenerate and have rational SFT grading $\mu_{CZ}^{\tau}(\gamma)+n-3>0$.
\end{definition}

\begin{proposition}
\label{prop:product_ADC}
Suppose that $V$ is Liouville with $c_1(V)$ torsion, then the contact boundary $\partial(V\times \DD^2)$ is $\pi_1$-sensitive ADC with respect to any trivialization $\tau$.
\end{proposition}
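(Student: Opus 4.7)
I would analyze the Reeb dynamics of the specific contact form $\alpha$ on $\partial(V\times\DD^2)$ constructed in Case 2 of the proof of \Cref{prop:curve}: after rounding the corner, $\alpha=\lambda_V+\frac{r^2}{2\pi}d\theta$ on the binding region $\partial V\times\DD^2$ and $\alpha=\lambda_V+\frac{1}{2\pi f}d\theta$ on the mapping torus region $V\times S^1$, for a suitable self-indexing Morse function $f$ on $V$. Its Reeb orbits then split into two families: first, the orbits $\gamma_p^k$ supported over critical points $p$ of $f$ and winding $k$ times in the $S^1$ factor; second, (a priori degenerate Morse--Bott families of) orbits in the binding region coming from closed Reeb orbits of $\lambda_V$ on $\partial V$. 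Since $c_1(V)$ is torsion, the same holds for $V\times\DD^2$, so one has well-defined rational Conley--Zehnder indices for the Reeb orbits with respect to any trivialization $\tau$ of $\xi$ over the $2$-skeleton.

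The core step would be an index formula leveraging the product structure $V\times\DD^2$. For the mapping torus orbits, after a non-degenerate perturbation of $f$, the linearized Reeb flow decomposes along the product so that
\begin{equation*}
CZ_\tau(\gamma_p^k)=2k+\ind(p)-n+c_\tau,
\end{equation*}
where the $2k$ contribution comes from the winding in the positively oriented $\DD^2$ factor and $c_\tau$ is a bounded correction depending on $\tau$. For the binding orbits, a small Morse--Bott perturbation by a Morse function on $\DD^2$ with unique minimum at the origin resolves the $\DD^2$-degeneracy, and the resulting non-degenerate orbits have $\tau$-Conley--Zehnder indices equal to the $\partial V$-index of the underlying $\lambda_V$-orbit, plus a bounded $\DD^2$-correction and a $+2$ per extra $\DD^2$-winding. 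In particular, for any fixed action threshold $D$, the set of orbits of action at most $D$ is finite and their indices are bounded below.

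To produce the sequence $(\alpha_i,D_i)$ with $D_i\to\infty$ required by the generalized ADC definition, I would combine rescalings $\alpha_i=c_i\alpha$ (with $c_i$ a decreasing sequence of small positive constants, in order to keep the sequence of forms decreasing) with Morse--Bott perturbations of finer and finer support that resolve the binding degeneracies at action bound $D_i$. The positivity of the $\DD^2$-winding contribution ensures that once $D_i$ is large enough, orbits of $\alpha_i$-action at most $D_i$ must have many windings either in the $S^1$ direction (for mapping torus orbits) or around the $\DD^2$ factor in the binding region; in both cases, their $\tau$-Conley--Zehnder indices are uniformly bounded below by a fixed constant. A diagonal argument then yields the desired decreasing sequence of non-degenerate contact forms verifying generalized ADC.

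The main obstacle would be the careful identification of the Conley--Zehnder indices of the binding orbits, which do not obviously wind in $\DD^2$ in the unperturbed dynamics and whose linearized Reeb flow is degenerate along the $\DD^2$ direction. This is resolved by correctly tracking the trivialization $\tau$ across the interface between the binding and mapping torus regions and exploiting the product structure of $V\times\DD^2$, in the spirit of the index computations already carried out in \cite{product}.
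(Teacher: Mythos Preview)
Your overall decomposition into the mapping-torus region $V\times S^1$ and the binding region $\partial V\times\DD^2$ matches the paper's, and your treatment of the mapping-torus orbits $\gamma_p^k$ is fine. The gap is in the binding region. For the contact form you borrow from Case~2 of \Cref{prop:curve}, namely $\alpha=\lambda_V+\tfrac{r^2}{2\pi}d\theta$ on $\partial V\times\DD^2$, the Reeb vector field is exactly $R_{\lambda_V}$; the closed orbits there are $\gamma_V\times\{p\}$ with \emph{zero} $\theta$-winding, and the linearised return map in the $\DD^2$-direction is the identity. After a Morse--Bott perturbation the resulting non-degenerate orbit at the minimum has index $CZ_{\partial V}(\gamma_V)+O(1)$, with no ``$+2$ per winding'' term available. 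Since $\partial V$ is an arbitrary Liouville boundary with no index hypothesis, $CZ_{\partial V}(\gamma_V)$ is in general unbounded below, so your claim that large action forces large $\DD^2$-winding (and hence a uniform lower bound on $CZ_\tau$) fails precisely for these orbits. Rescaling by constants $c_i$ does not help: it does not change any Conley--Zehnder index.

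The paper's proof avoids this by not working with a fixed form up to rescaling. It invokes the family of contact forms from \cite[Theorem~6.3 and Proposition~6.7]{Zho21}, which are deformations of the rounded boundary depending on auxiliary functions $(\rho,f,g)$. Along this family the Reeb flow on the $\partial V\times\DD^2$ piece acquires a genuine $\partial_\theta$-component whose rotation rate can be made arbitrarily large; this forces the Conley--Zehnder indices of all orbits on that piece (below the given action threshold) to be arbitrarily high with respect to the reference trivialisation $\tau_0$, swamping whatever negative contribution comes from $\partial V$. The passage from $\tau_0$ to an arbitrary $\tau$ is then handled by the $H^1$-correction $\langle\tau-\tau_0,\gamma\rangle$, which again the growing $\theta$-winding dominates. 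In short, the missing ingredient in your argument is exactly this non-trivial deformation of the contact form on the binding region; a rescaling-plus-perturbation scheme cannot produce it.
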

\begin{proof}
This proposition was essentially established in \cite[Theorem 6.3]{Zho21}; we give some further details here for completeness. 

We first choose a trivialisation $\tau'_0$ of $\det_{\CC} \oplus^N TV$ for some $N\in \NN_+$ over $V$, which induces a trivialisation $\tau_0$ of $\det_{\CC} \oplus^N T(V\times \DD^2)$ over $V\times \DD^2$. 
Following the proof of \cite[Theorem 6.3]{Zho21} and the notations therein, $\partial(V\times \DD^2)$ can then be decomposed into $Y^2_{\rho, f,g}$, diffeomorphic to $V\times S^1$, and $Y^1_{\rho, f,g}$, diffeomorphic to $\partial V \times \DD^2$. We obtain a sequence of decreasing contact forms by choosing $\rho,f,g$. Here $f,g$ are perturbation functions, and $\rho$ is a positive number measuring the size of $V$ compared to $\DD^2$, i.e.\ a large $\rho$ results in a contact form that is ``thin in the $\DD^2$ direction'' (such as what happens for the contact form at the boundary of a thin ellipsoid), see \cite[Proof of Theorem 6.3]{Zho21} for details. 
The Reeb orbits (with period up to some threshold) on $Y^2_{\rho,f,g}$ are contractible and have positive SFT degrees. Since those orbits are contractible, the SFT degree is independent of the trivialization \cite[Proposition 3.8]{gironella2021exact}. The Reeb orbits on $Y^1_{\rho,f,g}$ are described in \cite[Proposition 6.7]{Zho21}, where the Conley-Zehnder indices w.r.t.\ $\tau_0$ are computed using the Conley-Zehnder indices of the $\partial V$-component w.r.t.\ $\tau'_0$. In particular, in this sequence of contact forms, those orbits have arbitrarily high Conley-Zehnder indices w.r.t.\ $\tau_0$. Heuristically speaking, those orbits are similar to the orbits rotating along the long axis of a very thin ellipsoid. 

Lastly, for any trivialization $\tau$ of $\det_{\CC} \oplus^N T(V\times \DD^2)|_{\partial(V\times \DD^2)}$, the difference from the Conley-Zehnder index of $\gamma$ w.r.t.\ $\tau$ and $\tau_0$ is given by $\frac{2}{N}\la \tau-\tau_0,\gamma\ra$, where we view $\tau-\tau_0$ as a class in $H^1(\partial(V\times \DD^2))$, see \cite[\S 3.3.2]{gironella2021exact}. Therefore, the error is a bounded number for any fixed homology class of $\gamma$. On the other hand, for any fixed homology class,  Reeb orbits on $Y^1_{\rho,f,g}$ in that class have arbitrarily high Conley-Zehnder indices w.r.t.\ $\tau_0$ from the proof of \cite[Theorem 6.3]{Zho21},  this implies that $\partial(V\times \DD^2)$ is $\pi_1$-sensitive ADC w.r.t.\ $\tau$.
\end{proof}

Lazarev \cite[Theorems 3.14, 3.15]{Laz20} proved that the ADC property is preserved under subcritical and flexible surgeries as long as the vanishing of the rational first Chern class condition holds. The case of attaching a $2$-handle \cite[Theorem 3.14]{Laz20} has extra assumptions since Lazarev only considered contractible Reeb orbits. In the case of $\pi_1$-sensitive ADC manifolds,  \cite[Theorems 3.14, 3.15]{Laz20} holds with identical proofs as long as the trivializations are compatible: 

\begin{proposition}\label{prop:pi_1ADC}
    Let $W$ be a flexible cobordism from $Y_-$ to $Y_+$ with $c_1^{\Q}(TW)=0$. Let $\tau$ be a trivialization of $\det_{\CC}\oplus^N TW$ whose restriction to $Y_{\pm}$ is denoted by $\tau_{\pm}$. If $Y_-$ is $\pi_1$-sensitive ADC w.r.t.\ $\tau_-$, $Y_+$ is $\pi_1$-sensitive ADC w.r.t.\ $\tau_+$.
\end{proposition}

We now prove \Cref{thm:exact_non_Stein_sphere} from the Introduction. 

\begin{proof}[Proof of \Cref{thm:exact_non_Stein_sphere}]
We let $V$ be a Liouville domain as constructed by Massot, Niederkr\"uger, and Wendl \cite{MNW}, so that $V \cong N \times I$ and $N^{2n-1} = G / \Gamma$ is a quotient of a solvable Lie group by a co-compact lattice. 
More precisely, the contact structures on the boundary components of $V$ come from left-invariant structures on $G$, and are hence parallelizable. 
The same holds for the (symplectic) tangent bundle on $V$, and thus for $V \times \DD^2$.

Now set $M = \partial (V \times \DD^2)$ to be the contact boundary of the product. The underlying almost contact structure is stably trivial, and we can apply \Cref{lem:surg_parallel} to obtain an almost Weinstein cobordism $W$ to some homotopy sphere. Since the almost complex structure on $W$ is obtained from destabilizing a trivial complex structure, the first Chern class of $W$ is zero. Combining \Cref{prop:product_ADC} with \Cref{prop:pi_1ADC}, and realizing $W$ as a flexible Weinstein cobordism using \cite[Theorem 13.1]{CieEliBook}, we obtain an ADC contact structure on the convex end, which is a homotopy sphere $(\Sigma,\xi_{\Sigma})$. 
Moreover, this has a natural Liouville filling obtained by attaching Weinstein handles to $V \times \DD^2$, which in particular has trivial symplectic cohomology by \cite[Theorem 1.11]{Cie02b} and \cite[Theorem 5.6]{ BourEkEl}.
The cohomology of the resulting filling has a non-trivial element in degree $2n-1$, which evaluates non-trivially on the fundamental class of $N$. 

Using the ADC property and the vanishing of its symplectic cohomology, according to \cite[Corollary B]{Zho21} the cohomology of any Weinstein filling of $(\Sigma,\xi_{\Sigma})$ has cohomology identical to the cohomology of the natural Liouville filling, and hence has non-trivial cohomology in degree $2n-1$, which is bigger than $n+1$  under our assumption of $n \ge 3$.  
So we deduce that $(\Sigma,\xi_{\Sigma})$ is Liouville, but not Weinstein, fillable. Such phenomena were observed in \cite{Zho21}.

Now since the set of smooth, oriented homotopy spheres with the operation of connected sum forms a finite group, up to taking self-connected sums we can assume that $\Sigma \cong \mathbb S^{2n+1}$ is diffeomorphic to the standard sphere, which then also has an ADC contact structure with a Liouville filling with vanishing symplectic cohomology that is homologically not Weinstein. 
In other words, we have found a contact structure on the standard sphere which is Liouville but not Weinstein fillable. 

To get the standard almost contact structure, we use the fact that all stably trivial almost contact structures on a standard sphere can be realised via contact structures (cf.\ \cite[Lemma 2.17]{BCS1}) which are flexibly fillable. Taking a connected sum of one of such contact structures with the previously constructed Liouville non-Weinstein fillable structures then gives one structure as in the statement of \Cref{thm:exact_non_Stein_sphere}.
\end{proof}

One can now easily get infinitely many distinct contact structures as in \Cref{thm:exact_non_Stein_sphere} by taking the connected sum with the infinitely many examples of homotopically standard and flexibly fillable contact structures on spheres from \cite{Laz20}. Arguing {\em mutatis mutandis}, we also obtain \Cref{thm:exact_non_Stein_general} from the Introduction.
We also point out that the $5$-dimensional case remains open since, for all known examples of four-dimensional Liouville domains $V$ with disconnected boundary, the manifold $V \times \DD^2$ is Weinstein by Breen-Christian \cite{BC_arXiv}.

\bibliographystyle{alpha}      
\bibliography{biblio}


\end{document}